\documentclass[letterpaper,10pt]{article}



\usepackage[utf8]{inputenc}
\usepackage{lmodern}
\usepackage{csquotes}
\usepackage[english]{babel}

\usepackage{geometry}
\usepackage{microtype}
\usepackage{enumitem}

\usepackage{graphicx}
\usepackage{hyperref}
\usepackage{xcolor}
\definecolor{url}{HTML}{2A1B81}
\definecolor{cite}{HTML}{0C5724} 
\definecolor{link}{HTML}{800F0F} 
\definecolor{anchor}{HTML}{06245B} 
\hypersetup{colorlinks=true,
            urlcolor=,
            citecolor=cite,
            linkcolor=link,
            anchorcolor=anchor
}
\hypersetup{
    pdfauthor   = {Yannick Voglaire and Ping Xu},%
    pdftitle    = {Rozansky–Witten-type invariants from symplectic Lie pairs},%
    pdfsubject  = {We introduce symplectic structures on Lie pairs of (real or complex) algebroids as studied by Chen, Stiénon, and the second author, encompassing homogeneous symplectic spaces, symplectic manifolds with a g-action, and holomorphic symplectic manifolds. We show that to each such symplectic Lie pair are associated Rozansky-Witten-type invariants of three-manifolds and knots, given respectively by weight systems on trivalent and chord diagrams.},%
    pdfkeywords = {3-manifold invariants,knot invariants,weight systems,Rozansky-Witten,Lie algebroids,symplectic geometry,connections,Atiyah class},%
    pdfproducer = {PDFLaTeX}
}

\usepackage{amsmath,amsthm,amssymb}
\usepackage{breqn}
\usepackage[all]{xy}




\newtheorem{thm}{Theorem}[section]
\newtheorem{lem}[thm]{Lemma}

\newtheorem{defn}[thm]{Definition}
\newtheorem{prop}[thm]{Proposition}

\newcommand{\exend}{
    \hbox{}\nobreak\hfill\ensuremath{\lhd}
}

\theoremstyle{definition}
\newtheorem{ex}[thm]{Example}
\newtheorem{rem}[thm]{Remark}

\numberwithin{equation}{section}
\numberwithin{figure}{section}

\newcommand{\fR}{\mathbb{R}} 
\newcommand{\fC}{\mathbb{C}}
\newcommand{\fK}{\mathbb{K}}
\newcommand{\fN}{\mathbb{N}}

\DeclareMathOperator{\Span}{Span}
\DeclareMathOperator{\End}{End}
\DeclareMathOperator{\id}{id}
\DeclareMathOperator{\Ad}{Ad}
\DeclareMathOperator{\ad}{ad}
\DeclareMathOperator{\rk}{rk}
\DeclareMathOperator{\sign}{sign}

\DeclareMathOperator{\Adm}{Adm}
\DeclareMathOperator{\Tr}{Tr}

\newcommand{\g}{\mathfrak{g}}
\newcommand{\h}{\mathfrak{h}}
\newcommand{\mdp}{\Join} 
\renewcommand{\L}{\mathcal{L}} 

\newcommand{\so}{\textbf{s}}
\newcommand{\ta}{\textbf{t}}

\renewcommand{\i}{\iota}

\newcommand{\ol}[1]{\overline{#1}}

\let\oldemph\emph
\renewcommand{\emph}[1]{\textbf{#1}}

\newcommand{\trans}{\rtimes}

\DeclareFontFamily{U}{matha}{\hyphenchar\font45}
\DeclareFontShape{U}{matha}{m}{n}{
      <5> <6> <7> <8> <9> <10> gen * matha
      <10.95> matha10 <12> <14.4> <17.28> <20.74> <24.88> matha12
      }{}
\DeclareSymbolFont{matha}{U}{matha}{m}{n}
\DeclareFontSubstitution{U}{matha}{m}{n}
\DeclareMathSymbol{\abxcup}{\mathbin}{matha}{'131}


\title{Rozansky--Witten-type invariants from symplectic Lie pairs}
\author{Yannick Voglaire%
\thanks{Research partially supported by the Fonds National de la Recherche, Luxembourg, through the AFR Grant PDR 2012-1 (Project Reference 3966341).}
\\
Mathematics Research Unit, University of Luxembourg\\
\href{mailto:yannick.voglaire@gmail.com}{yannick.voglaire@gmail.com}
\and
Ping Xu%
\thanks{Research partially supported by the National Science Foundation Grant DMS-1101827.}
\\
Department of Mathematics, Penn State University\\
\href{mailto:ping@math.psu.edu}{ping@math.psu.edu}
}


\begin{document}

\maketitle

\begin{abstract}
  We introduce symplectic structures on ``Lie pairs'' of (real or complex) Lie algebroids as studied by Chen, Stiénon, and the second author in \cite{chen_atiyah_2012}, encompassing homogeneous symplectic spaces, symplectic manifolds with a $\g$-action, and holomorphic symplectic manifolds.
  We show that to each such symplectic Lie pair are associated Rozansky--Witten-type invariants of three-manifolds and knots, given respectively by weight systems on trivalent and chord diagrams.
\end{abstract}

\tableofcontents


\section{Introduction}
\label{sec:introduction}

In \cite{rozansky_hyper-kahler_1997}, Rozansky and Witten defined finite-type invariants of 3-manifolds associated to hyper-Kähler manifolds, by constructing weight systems on trivalent graphs.

Weight systems are linear functionals on the $\fC$-vector space generated by all trivalent diagrams, subject to the so-called AS and IHX relations (see \cite{bar-natan_vassiliev_1995,garoufalidis_finite_1998}).
They enable the construction of finite-type invariants of integral homology 3-spheres by precomposing them with a ``universal'' finite-type invariant such as the LMO invariant \cite{le_universal_1998}, associating to each integral homology 3-sphere a linear combination of trivalent diagrams.

Shortly after \cite{rozansky_hyper-kahler_1997}, Kontsevich \cite{kontsevich_rozanskywitten_1999} and Kapranov \cite{kapranov_rozanskywitten_1999} realized that the construction extended to a broader context and that, in particular, the hyper-Kähler metric was not required: a holomorphic symplectic manifold was sufficient.
See \cite{sawon_rozanskywitten_1999,kapustin_topological_2010,qiu_knot_2012} for further references about Rozansky--Witten invariants.
Kapranov \cite{kapranov_rozanskywitten_1999} showed moreover that the whole construction relied on a single cohomology class---the Atiyah class of the underlying complex manifold---measuring the obstruction to the existence of a holomorphic connection. 

Atiyah classes attracted much attention in the last decade (see for example \cite{ramadoss_big_2008,markarian_atiyah_2009,roberts_rozanskywitten_2010}).
Recently, Chen, Stiénon, and the second author \cite{chen_atiyah_2012} defined a notion of Atiyah class for a pair of (real or complex) Lie algebroids $A\subset L$ over a manifold $M$ (a \emph{Lie pair}, in short) and for an $A$-module $E\to M$, 
generalizing the original case of complex manifolds \cite{atiyah_complex_1957}, 
the Atiyah--Molino class for connections transverse to a foliation \cite{molino_classe_1971,vaisman_sur_1973,kamber_characteristic_1974}, 
and the obstruction to the existence of invariant connections on homogeneous spaces \cite{wang_invariant_1958,vinberg_invariant_1960,hai_conditions_1964}.

The present paper is a natural follow-up of \cite{chen_atiyah_2012}. 
We introduce symplectic structures on Lie pairs and show that, as in Kapranov's work on holomorphic symplectic manifolds, a symplectic Lie pair induces a weight system on trivalent diagrams.
Given a Lie pair $A\subset L$ with a symplectic structure $\omega\in\Gamma(\Lambda^2(L/A)^*)$, the constructed weight system takes values in the Lie algebroid cohomology of $A$ with trivial coefficients, and sends trivalent diagrams with $2k$ vertices to $H^{2k}(A)$.
Unlike in the holomorphic symplectic case, where $L/A$ and $A$ are the holomorphic and antiholomorphic tangent bundles, it is in general not possible to extract numerical invariants in a canonical way from the weights in $H^{2k}(A)$.

Recently, the Rozansky--Witten theory was studied from the viewpoint of
categorified algebraic geometry by Kapustin and Rozansky \cite{kapustin_three-dimensional_2010}.
An AKSZ-type construction of a sigma model underlying a symplectic Lie pair and its connections
with derived geometry will be studied elsewhere.

The paper is organized as follows.
In Section~\ref{sec:preliminaries}, we recall some basic facts and fix the notation.
In Section~\ref{sec:symplectic-lie-pairs}, we introduce symplectic structures on Lie pairs, provide some examples, and explain the meaning of such a structure on the groupoid level.
In Section~\ref{sec:atiyah-classes}, we adapt the classical notions and results about symplectic connections to the context of symplectic Lie pairs, and use these to build a totally symmetric Atiyah cocycle.
In Section~\ref{sec:weight-systems}, we construct the announced weight systems, essentially following Rozansky and Witten \cite{rozansky_hyper-kahler_1997}, Kapranov \cite{kapranov_rozanskywitten_1999}, and Sawon \cite{sawon_rozanskywitten_1999}.


\subsection*{Acknowledgments}
We would like to express our gratitude to the following institutions for their hospitality while we were working on this project: Université Paris Diderot-Paris~7 (Xu) and Penn State University (Voglaire).
We would also like to thank Justin Sawon and Mathieu Sti\'enon for fruitful discussions, as well as the anonymous reviewer for useful suggestions.


\section{Preliminaries}
\label{sec:preliminaries}
We fix here some terminology and notation that will be used throughout the paper.

A \emph{Lie algebroid} over $\fK$ (with $\fK=\fR$ or $\fC$) is a $\fK$-vector bundle $L\to M$ over a smooth manifold $M$, together with a bundle map $\rho:L\to TM\otimes_\fR\fK$ called the \emph{anchor}, and a bracket $[\cdot,\cdot]$ on the sections of $L$, such that
\[
  [l_1,fl_2]=f[l_1,l_2]+\rho(l_1)(f)l_2
\]
for all $l_1,l_2\in\Gamma(L)$ and $f\in C^\infty(M)\otimes_\fR\fK$.
In that case, $\rho$ seen as a map of sections is a morphism of Lie algebras.

An $L$-connection on a $\fK$-vector bundle $E\to M$ is a $\fK$-bilinear map
\[
  \nabla:\Gamma(L)\times\Gamma(E)\to\Gamma(E)
\]
such that
\[
  \nabla_{fl}e = f\nabla_le \qquad \text{and} \qquad \nabla_l(fe)=f\nabla_le+\rho(l)(f)e
\]
for all $l\in\Gamma(L)$, $e\in\Gamma(E)$ and $f\in C^\infty(M)\otimes_\fR\fK$.
The \emph{curvature} of $\nabla$ is the bundle map $R:L\otimes L\to \End(E)$ defined on sections by
\[
  R(l_1,l_2) = \nabla_{l_1}\nabla_{l_2} - \nabla_{l_2}\nabla_{l_1} - \nabla_{[l_1,l_2]}
\]
for all $l_1,l_2\in \Gamma(L)$. 
An $L$-connection is \emph{flat} if its curvature identically vanishes.
A vector bundle $E\to M$ together with a flat $L$-connection is called an \emph{$L$-module}, and the connection itself is called a representation of $L$. 

A \emph{Lie pair} is a pair $(L,A)$ of Lie algebroids over the same manifold, with $A$ a Lie subalgebroid of $L$.
We denote $l\mapsto \ol l$ the canonical projection $L\to L/A$.
Although it is not an $L$-module in general, the quotient $L/A$ is automatically an $A$-module with connection $\nabla^A$ defined by $\nabla^A_a\ol l=\ol{[a,l]}$ for all $a\in\Gamma(A)$ and $l\in\Gamma(L)$.

Let $(E,\nabla^A)$ be an $A$-module,
and let $\Omega^k(A,E)=\Gamma(\Lambda^{k}A^*\otimes E)$ denote the $E$-valued $k$-forms on $A$, $k\geq 0$.
There is a differential $\partial^A$ on the complex $\Omega^\bullet(A,E)$ which extends the flat connection $\nabla^A$ seen as a map $\Omega^0(A,E)\to \Omega^1(A,E)$.
It is defined by
\begin{dmath}
\label{eq:definition-of-partial-A}
  (\partial^A \eta)(a_0,\dots,a_k)
  = \sum_{i=0}^k (-1)^i \nabla^A_{a_i}(\eta(a_0,\dots,\hat a_i,\dots,a_k))
  + \sum_{i<j} (-1)^{i+j} \eta([a_i,a_j],a_0,\dots,\hat a_i,\dots, \hat a_j,\dots,a_k) ,
\end{dmath}
for all $\eta\in\Omega^k(A,E)$ and $a_0,\dots,a_k\in\Gamma(A)$.
It gives rise to the Lie algebroid cohomology $H^\bullet(A,E)$ of $A$ with coefficients in $E$.
When the $A$-module $E$ is the trivial line bundle $M\times \fK$ with action $\nabla^A_af=\rho(a)(f)$, the differential $\partial^A$ is written $d_A:\Omega^\bullet(A)\to\Omega^{\bullet+1}(A)$,
and the corresponding Lie algebroid cohomology is denoted by $H^\bullet(A)$.

Note also that if $(E,\nabla^A)$ is an $A$-module, then $E^{\otimes k}\otimes (E^*)^{\otimes l}$ is naturally an $A$-module, for all $k,l\geq 0$.
The associated flat connection, still denoted by the same symbol $\nabla^A$, is defined by
\begin{equation}
\label{eq:extension-of-connection-to-tensor-powers}
  \begin{aligned}
    \nabla^A_af 
    &= \rho(a)(f) ,
    \\
    \left \langle \nabla^A_a\epsilon, e \right \rangle 
    &= \rho(a) ( \left \langle \epsilon, e \right \rangle )
       - \left \langle \epsilon, \nabla^A_a e \right \rangle ,
  \end{aligned}
\end{equation}
for all $f\in C^\infty(M)\otimes_\fR\fK$, $\epsilon\in\Gamma(E^*)$, and $e\in\Gamma(E)$,
and then extended by derivations.

Given an $A$-module $(E,\nabla^A)$ and an $L$-connection $\nabla$ on $E$ \emph{extending the $A$-action} in the sense that $\nabla_ae=\nabla^A_ae$ for all $a\in\Gamma(A)$ and $e\in\Gamma(E)$, it follows \cite[Theorem 16]{chen_atiyah_2012} that the 1-form
\[
  R^\nabla_E\in\Omega^1(A,(L/A)^*\otimes \End(E))
\]
defined by
\[
  R^\nabla_E(a)(\ol l,e)=\left( \nabla_a\nabla_l - \nabla_l\nabla_a - \nabla_{[a,l]} \right) e ,
\]
for all $a\in\Gamma(A)$, $l\in\Gamma(L)$, and $e\in\Gamma(E)$, is $\partial^A$-closed.
Moreover, the cohomology class 
\[
  \alpha_E\in H^1(A,(L/A)^*\otimes \End(E))
\]
that it induces is independent of the chosen connection $\nabla$ extending the $A$-action, and is called the \emph{Atiyah class of $E$}. 

The construction of $\alpha_E$ applies in particular to the $A$-module $L/A$ to give the \emph{Atiyah class $\alpha_{L/A}$ of the Lie pair $(L,A)$}.

The Atiyah class measures the obstruction to finding an \emph{$A$-compatible} $L$-connection on $E$ in the sense of \cite{chen_atiyah_2012}, i.e., an $L$-connection on $E$ extending the $A$-action and such that $R^\nabla_E=0$.
Geometrically, its significance is described by Proposition~\ref{prop:atiyah-zero-equiv-invariant-connection} below. 
To state it, let us first introduce some notation.

Assume that in a Lie pair $(L,A)$ over $\fR$, $L$ integrates to a Lie groupoid $\Gamma_L$ and $A$ to a wide closed Lie subgroupoid $\Gamma_A$. (A subgroupoid of a Lie groupoid is said to be \emph{wide} if it is on the same base manifold, and \emph{closed} if it is a closed embedded submanifold.)
The corresponding ``homogeneous space'' $\Gamma_L/\Gamma_A$ was first considered in \cite{liu_dirac_1998}.
By \cite[Sec.\ 3]{moerdijk_integrability_2006}, $\Gamma_L/\Gamma_A$ is a smooth (Hausdorff) manifold such that $\Gamma_L\to \Gamma_L/\Gamma_A$ is a submersion.
The source map $\so:\Gamma_L\to M$ of $\Gamma_L$ induces a surjective submersion $J:\Gamma_L/\Gamma_A\to M$,
and the left translations in $\Gamma_L$ naturally induce a left action on $\Gamma_L/\Gamma_A$.
By a \emph{fibrewise affine connection} on $\Gamma_L/\Gamma_A$, we mean a smooth map
\[
  \nabla:\Gamma(\ker T(J))\otimes \Gamma(\ker T(J))\to \Gamma(\ker T(J))
\]
such that
\[
  \nabla_{fX}Y=f\nabla_XY \quad \text{and} \quad \nabla_X(fY)=X(f)Y+f\nabla_XY
\] 
for all $X,Y\in\Gamma(\ker T(J))$ and $f\in C^\infty(\Gamma_L/\Gamma_A)$.
This is just a smooth collection of affine connections on the $J$-fibers.
The connection is said to be \emph{invariant} if it is invariant under the action of $\Gamma_L$ on $\Gamma_L/\Gamma_A$.

\begin{prop}[{\cite{laurent-gengoux_invariant_2014}}]
\label{prop:atiyah-zero-equiv-invariant-connection}
  Let $\Gamma_L$ be a Lie groupoid and $\Gamma_A$ be a wide closed $\so$-connected Lie subgroupoid, with Lie algebroids $L$ and $A$, respectively.
  Then there is a one-to-one correspondence between $A$-compatible $L$-connections on $L/A$ and fibrewise affine connections on $\Gamma_L/\Gamma_A$.
  In particular, the Atiyah class $\alpha_{L/A}$ of the Lie pair $(L,A)$ vanishes if and only if there exists an invariant fibrewise affine connection on $\Gamma_L/\Gamma_A$.
\end{prop}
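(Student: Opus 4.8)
The plan is to prove the bijection fibre by fibre over $M$, as a relative version of the classical correspondence of Nomizu and Wang between invariant affine connections on a homogeneous space $G/H$ and ``Nomizu operators'' on $\g$: an $A$-compatible $L$-connection on $L/A$ is exactly the Lie-pair incarnation of such an operator --- the point being that a (fibrewise) affine connection is not a tensor, so what is invariant is not merely a $\Gamma_A$-invariant tensor along the base but a bilinear map subject to the extra constraint $R^{\nabla}_{L/A}=0$. Write $p\colon\Gamma_L\to X:=\Gamma_L/\Gamma_A$ for the projection and $u\colon M\to X$ for the section of $J$ induced by the units. The elementary starting observation is that $p_*$ restricts over the units to a surjection $L_m\twoheadrightarrow\ker T_{u(m)}J$ with kernel $A_m$, so that $(\ker TJ)|_{u(M)}\cong L/A$ canonically, and under this isomorphism $\nabla^A$ is the infinitesimal isotropy action, i.e.\ the linearisation along $u(M)$ of the $\Gamma_A$-action on the $J$-fibres. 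The $\so$-connectedness of $\Gamma_A$ serves, as usual, to promote $A$-invariance of infinitesimal data to $\Gamma_A$-invariance. Granting the bijection, the last assertion is immediate: by \cite{chen_atiyah_2012} an $A$-compatible $L$-connection on $L/A$ exists if and only if $\alpha_{L/A}=0$, whence so does an invariant fibrewise affine connection on $\Gamma_L/\Gamma_A$.

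In one direction, from a $\Gamma_L$-invariant fibrewise affine connection $\nabla$ on $X$ one reads off an $L$-connection $\nabla^{L/A}$ on $L/A$ out of the $1$-jet of $\nabla$ along $u(M)$: each $l\in\Gamma(L)$ acts on $X$ by a vector field covering the anchor $\rho(l)$ along $J$, and combining these action vector fields with $\nabla$ along $u(M)$ and projecting to $(\ker TJ)|_{u(M)}\cong L/A$ produces the bilinear operation $\nabla^{L/A}$. A routine check --- using the defining identities of $\nabla$ and the compatibility of the $L$-action with the anchor --- shows that $\nabla^{L/A}$ is an $L$-connection on $L/A$ extending $\nabla^A$ (the infinitesimal shadow of the identification above), and that $R^{\nabla^{L/A}}_{L/A}$ vanishes: it measures the infinitesimal variation of $\nabla$ in the $\Gamma_A$-directions, which is zero since $\nabla$ is invariant. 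Thus $\nabla^{L/A}$ is $A$-compatible.

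In the other direction, given an $A$-compatible $L$-connection $\nabla^{L/A}$ on $L/A$, one reconstructs $\nabla$ by the fibrewise Nomizu construction: each point of a $J$-fibre is obtained from a point of $u(M)$ by the $\Gamma_L$-action, so one transports $J$-vertical vector fields back to $u(M)$ along the action, differentiates them there using the bilinear datum $\nabla^{L/A}$, and transports the outcome forward. Here $R^{\nabla^{L/A}}_{L/A}=0$ is precisely the integrability condition making this independent of the group element chosen modulo $\Gamma_A$, while the residual $\Gamma_A$-ambiguity is absorbed by combining the $\so$-connectedness of $\Gamma_A$ with the fact that $\nabla^{L/A}$ restricts along $A$ to the infinitesimal isotropy action; the result is a well-defined, smooth, $\Gamma_L$-invariant fibrewise affine connection, and a direct computation shows that the two assignments are mutually inverse. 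The part I expect to be the main obstacle is this converse direction: checking that the Nomizu-type recipe is well defined and depends smoothly on the base point uniformly over the whole family $\Gamma_L/\Gamma_A$ of homogeneous spaces --- which is exactly where both the integrability $R^{\nabla^{L/A}}_{L/A}=0$ and the $\so$-connectedness of $\Gamma_A$ are used essentially. For the complete argument one follows \cite{laurent-gengoux_invariant_2014}.
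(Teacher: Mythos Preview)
The paper does not supply its own proof of this proposition: it is stated with an attribution to \cite{laurent-gengoux_invariant_2014} and used as a black box, so there is nothing in the paper to compare your argument against. Your sketch of the Nomizu--Wang type correspondence (identifying $(\ker TJ)|_{u(M)}\cong L/A$, reading off $\nabla^{L/A}$ from a $\Gamma_L$-invariant fibrewise connection along the unit section, and conversely transporting $\nabla^{L/A}$ by the $\Gamma_L$-action) is the expected strategy and is consistent with how the paper later uses the result; you yourself defer to \cite{laurent-gengoux_invariant_2014} for the details, which is exactly what the paper does.

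One small point worth flagging: as written, the proposition asserts a bijection between $A$-compatible $L$-connections and \emph{fibrewise} affine connections, with no invariance hypothesis in the first sentence; your argument (correctly, for the bijection to make sense) tacitly reads this as $\Gamma_L$-\emph{invariant} fibrewise affine connections throughout. That reading is the intended one --- otherwise the ``In particular'' clause would not follow --- but it is good to be explicit that you are interpreting the statement this way.
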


See also \cite{laurent-gengoux_kapranov_2014} for a similar result in the sense of formal neighborhoods.

In the case of pairs of Lie algebras $(\g,\h)$, the relation between bilinear maps 
\[
  \nabla:\g\times \g/\h\to \g/\h
\] 
such that 
\[
  \nabla_z=\ad_z \quad \text{and} \quad \nabla_z\nabla_x-\nabla_x\nabla_z-\nabla_{[z,x]}=0
\]
for all $x\in\g,z\in\h$, and invariant connections on a corresponding homogeneous space $G/H$ is well-known since the 60's, see \cite{wang_invariant_1958,vinberg_invariant_1960,hai_conditions_1964}, and more recently \cite{bordemann_atiyah_2012}.

As in the Lie algebra case, for Lie algebroids, a statement similar to Proposition~\ref{prop:atiyah-zero-equiv-invariant-connection} holds, concerning the Atiyah class of any $A$-module $E$ in relation to the existence of invariant fibrewise connections on a natural vector bundle over $\Gamma_L/\Gamma_A$ associated to $E$ \cite{laurent-gengoux_invariant_2014}.

For a permutation $\sigma$ of $\{1,\dots,n\}$, we define a linear transformation $\tau_\sigma$ permuting the $n$ components of a tensor product according to $\sigma$:
\begin{align}
\label{eq:definition-tau}
  \tau_\sigma : V_1\otimes \cdots \otimes V_n  &    \to  V_{\sigma(1)} \otimes \cdots \otimes V_{\sigma(n)} \\
                v_1\otimes \cdots \otimes v_n  &\mapsto  v_{\sigma(1)} \otimes \cdots \otimes v_{\sigma(n)}  \nonumber
\end{align}
where the vector bundles $V_i$ will be clear from the context (see Subsections~\ref{ssec:symmetries-of-the-atiyah-cocycle} and \ref{ssec:weight-systems}).
Explicit permutations will be written in canonical form.
For example, for a set of five elements, the notations $(143)(25)$ and $(12)$ respectively represent the permutations
\begin{align*}
  &
  \begin{pmatrix}
    1 & 2 & 3 & 4 & 5 \\
    4 & 5 & 1 & 3 & 2
  \end{pmatrix} ,
  &
  &
  \begin{pmatrix}
    1 & 2 & 3 & 4 & 5 \\
    2 & 1 & 3 & 4 & 5
  \end{pmatrix} .
\end{align*}


\section{Symplectic Lie pairs}
\label{sec:symplectic-lie-pairs}

\begin{defn}
\label{def:presymplectic}
  A \emph{presymplectic structure} on a Lie algebroid $L$ is a $d_L$-closed 2-form $\Omega\in\Gamma(\Lambda^2 L^*)$ with constant rank.
  The pair $(L,\Omega)$ is then called a \emph{presymplectic Lie algebroid}.
\end{defn}

\begin{rem}
  Any Lie algebroid $L$ gives rise to a Lie bialgebroid \cite{mackenzie_lie_1994} $(L,L^*)$ by endowing $L^*$ with the trivial Lie algebroid structure.
  This defines a Courant algebroid structure \cite{liu_manin_1997} on $L\oplus L^*$.
  Theorem 6.1 (together with Remark (1) following it) in the latter reference shows that a 2-form $\Omega\in\Gamma(\Lambda^2L^*)$ on a Lie algebroid $L$ is presymplectic in the sense of Definition~\ref{def:presymplectic} if and only if
  \begin{equation*}
    D=\{(l,\xi) \mid \xi=\Omega^\flat(l)\}
  \end{equation*}
  is a Dirac structure on $L\oplus L^*$, where $\Omega^\flat:L\to L^*$ is the induced bundle map corresponding to $\Omega$.
\end{rem}

\begin{defn}
  A \emph{symplectic structure} on a Lie pair $(L,A)$ is a non-degenerate 2-form $\omega\in \Gamma(\Lambda^2(L/A)^*)$ such that $d_L (p^*\omega)=0$, where $p:L\to L/A$ is the canonical projection.
  The triple $(L,A,\omega)$ is then called a \emph{symplectic Lie pair}.
\end{defn}

\begin{prop}
  Let $L$ be a Lie algebroid.
  There is a bijection between presymplectic structures on $L$ and symplectic Lie pairs $(L,A,\omega)$.
\end{prop}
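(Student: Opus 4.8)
The plan is to exhibit the bijection explicitly and check it is well-defined in both directions. Given a presymplectic structure $\Omega$ on $L$, its kernel $A = \ker\Omega^\flat = \{l\in L \mid \iota_l\Omega = 0\}$ is a subbundle, because $\Omega$ has constant rank. First I would verify that $A$ is a Lie subalgebroid: if $a_1,a_2\in\Gamma(A)$ and $l\in\Gamma(L)$, then the Cartan-type formula for $d_L\Omega$ evaluated on $a_1,a_2,l$, together with $\iota_{a_i}\Omega = 0$, forces $\Omega([a_1,a_2],l)=0$, so $[a_1,a_2]\in\Gamma(A)$; the anchor of $A$ is the restriction of $\rho$, so this makes $(L,A)$ a Lie pair. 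Since $\Omega$ vanishes on $A$ in each slot, it descends to a well-defined bundle map $\omega\in\Gamma(\Lambda^2(L/A)^*)$ with $p^*\omega = \Omega$; non-degeneracy of $\omega$ is exactly the statement that $A$ is the full kernel of $\Omega^\flat$; and $d_L(p^*\omega) = d_L\Omega = 0$ is given. So $(L,A,\omega)$ is a symplectic Lie pair.

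Conversely, given a symplectic Lie pair $(L,A,\omega)$, set $\Omega = p^*\omega\in\Gamma(\Lambda^2 L^*)$. It is $d_L$-closed by definition. Its rank is constant and equal to $\mathrm{rk}(L/A) = \mathrm{rk}(L) - \mathrm{rk}(A)$: indeed $\Omega^\flat$ factors as $L \xrightarrow{p} L/A \xrightarrow{\omega^\flat} (L/A)^* \hookrightarrow L^*$, where $\omega^\flat$ is an isomorphism by non-degeneracy and the last map is the injection dual to $p$, so $\ker\Omega^\flat = \ker p = A$ and the image has constant rank. Hence $\Omega$ is a presymplectic structure on $L$. One then checks the two assignments are mutually inverse: starting from $\Omega$, passing to $(L,A,\omega)$ and back returns $p^*\omega = \Omega$; starting from $(L,A,\omega)$, the kernel of $p^*\omega$ is $A$ as just computed, and the induced 2-form on $L/A$ is $\omega$ again. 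This establishes the bijection.

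The only genuine point requiring care is that $A=\ker\Omega^\flat$ is involutive for the bracket on $\Gamma(L)$, i.e.\ the step using $d_L\Omega = 0$; everything else is a bookkeeping exercise with pullbacks and ranks. Concretely, from
\[
  0 = (d_L\Omega)(a_1,a_2,l) = -\Omega([a_1,a_2],l) + \Omega([a_1,l],a_2) - \Omega([a_2,l],a_1) + \rho(a_1)\Omega(a_2,l) - \rho(a_2)\Omega(a_1,l) + \rho(l)\Omega(a_1,a_2),
\]
all terms except the first vanish because $\iota_{a_1}\Omega = \iota_{a_2}\Omega = 0$, giving $\Omega([a_1,a_2],l) = 0$ for all $l\in\Gamma(L)$, hence $[a_1,a_2]\in\Gamma(A)$. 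I would also note in passing that this is precisely the $d_L$-closedness hypothesis being used, so presymplecticity (not merely having constant rank) is essential for $A$ to be a Lie subalgebroid, and correspondingly the closedness condition $d_L(p^*\omega)=0$ in the definition of symplectic Lie pair is what makes the inverse construction land in the right place.
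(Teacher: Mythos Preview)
Your proof is correct and follows essentially the same approach as the paper's own proof: both directions are set up identically, the key step is the Cartan-formula computation showing $\ker\Omega$ is closed under the bracket, and the inverse constructions are checked to match. Your version is slightly more explicit in places (e.g.\ the factorization $\Omega^\flat = (\omega^\flat\circ p)$ to identify the kernel and rank), but there is no substantive difference in strategy.
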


\begin{proof}
  The kernel $\ker \Omega$ of a 2-form $\Omega\in\Gamma(\Lambda^2 L^*)$ is defined by $\ker \Omega= \{ v \in L \mid \iota_v\Omega = 0 \}$.
  Obviously, if $\omega$ is a symplectic structure on a Lie pair $(L,A)$, then $\Omega=p^*\omega$ is a presymplectic structure on $L$ with kernel equal to $A$.

  Conversely, let $\Omega\in\Gamma(\Lambda^2 L^*)$ be a presymplectic structure on $L$, and let $A=\ker \Omega$.
  Since $\Omega$ has constant rank, its kernel $A$ has constant rank as well.
  And the closedness of $\Omega$ yields
  \begin{align*}
    0
    &=
    \left(d_L\Omega\right)(a,a',l)
    \\
    &=
    \rho(a)\left(\Omega(a',l)\right) - \rho(a') \left(\Omega(a,l)\right) + \rho(l) \left(\Omega(a,a')\right) \\
    &\mathbin{\phantom{=}}
    {} - \Omega([a,a'],l) + \Omega([a,l],a') - \Omega([a',l],a) \\
    &=
    -\Omega([a,a'],l)
  \end{align*}
  for all $a,a'\in\Gamma(A)$, $l\in\Gamma(L)$, which implies $[\Gamma(A),\Gamma(A)]\subset \Gamma(A)$.
  Hence, $A$ is a Lie subalgebroid of $L$.
  Since $A=\ker \Omega$, the form $\Omega$ descends as a 2-form $\omega$ on $L/A$ such that $\Omega=p^*\omega$.
  We have $d_L(p^*\omega)=d_L\Omega=0$, and therefore $(L,A,\omega)$ is a symplectic Lie pair.

  The two constructions are obviously inverse of each other.
\end{proof}

The most obvious examples of presymplectic structures are of course the regular presymplectic manifolds.
\begin{prop}
  The regular presymplectic forms on a manifold $M$ are in bijection with the presymplectic structures on the Lie algebroid $TM$.
\end{prop}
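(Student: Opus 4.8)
The plan is to observe that the Lie-algebroid calculus of the tangent bundle $TM$, viewed as a Lie algebroid with anchor the identity map and bracket the usual Lie bracket of vector fields, is nothing but the de Rham calculus, so that the asserted bijection is simply the identity map on $2$-forms. Concretely, the proof amounts to three routine identifications of definitions.

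First, the underlying objects coincide: a $2$-form on the Lie algebroid $TM$ is by definition a section of $\Lambda^2(TM)^* = \Lambda^2 T^*M$, which is exactly an ordinary differential $2$-form on $M$. So both sides of the claimed bijection live inside $\Gamma(\Lambda^2 T^*M) = \Omega^2(M)$, and it remains only to check that the two defining conditions---closedness and constant rank---agree.

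Second, I would specialize the defining formula for the Lie algebroid differential $d_L$ (the analogue of~\eqref{eq:definition-of-partial-A} for the trivial line bundle) to $L = TM$, with $\rho = \id_{TM}$ and $[\cdot,\cdot]$ the Lie bracket of vector fields. Term by term this is the classical Koszul formula for the exterior derivative, so $d_{TM}$ is the de Rham differential $d$; in particular $d_{TM}$-closedness is ordinary closedness. This is the only point requiring a (short, routine) verification---it is the familiar fact that the Lie algebroid cohomology of $TM$ is the de Rham cohomology of $M$.

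Third, I would check that the notion of rank agrees: the rank of $\Omega \in \Gamma(\Lambda^2 T^*M)$ as a Lie algebroid $2$-form is by definition the rank of the induced bundle map $\Omega^\flat : TM \to T^*M$, $v \mapsto \iota_v\Omega$, which pointwise equals the rank of $\Omega$ as a skew bilinear form on the fibres of $TM$---i.e.\ its rank as a differential form. Hence ``$\Omega$ has constant rank'' is exactly the regularity condition. Combining the three points, a $2$-form on $M$ is a regular presymplectic form if and only if it is a presymplectic structure on the Lie algebroid $TM$, and the identity map is the desired bijection. There is no real obstacle here: the statement is a translation of definitions, the only substantive input being the identification $d_{TM} = d$.
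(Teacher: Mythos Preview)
Your proposal is correct: the proposition is a direct translation of definitions, and your three identifications (2-forms, $d_{TM}=d$, rank) are exactly what is needed. The paper itself gives no proof of this proposition---it is stated as an evident example---so your argument is in complete agreement with the paper's (implicit) approach.
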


In the next three subsections, we describe a few other examples.


\subsection{Holomorphic symplectic manifolds}

Let $(X,\omega)$ be a holomorphic symplectic manifold, i.e., let $X$ be a complex manifold and $\omega\in\Omega^{2,0}(X)$ be a closed non-degenerate holomorphic $(2,0)$-form.
Note that for a $(2,0)$-form, closed actually implies holomorphic, since $d=\partial+\ol{\partial}$ and $\partial\omega\in\Omega^{3,0}(X)$, $\ol{\partial}\omega\in\Omega^{2,1}(X)$.

Consider the complex Lie algebroids $L=TX\otimes \fC \cong T^{1,0}X\oplus T^{0,1}X$ and $A=T^{0,1}X$.
Then $\omega$ is a $2$-form on $L$ with kernel $A$, and $d\omega=0$ is equivalent to $d_L\omega=0$.
Hence $(L,\omega)$ is a presymplectic Lie algebroid. Equivalently, $(L,A,\omega')$ is a symplectic Lie pair, where $\omega'\in\Gamma(\Lambda^2(L/A)^*)$ is $\omega$ seen as a 2-form on $L/A\cong T^{1,0}X$.

\begin{prop}
  Let $X$ be a complex manifold. 
  There is a bijection between the holomorphic symplectic structures on $X$ and the symplectic structures on the Lie pair $(TX\otimes\fC,T^{0,1}X)$.
\end{prop}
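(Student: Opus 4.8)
The plan is to exhibit the claimed bijection explicitly and check it is well-defined in both directions, essentially unwinding the discussion immediately preceding the statement. Fix a complex manifold $X$ and set $L=TX\otimes\fC$, $A=T^{0,1}X$, so that $L/A\cong T^{1,0}X$ canonically. A holomorphic symplectic structure is a closed non-degenerate holomorphic $(2,0)$-form $\omega$; a symplectic structure on the Lie pair is a non-degenerate $\omega'\in\Gamma(\Lambda^2(L/A)^*)$ with $d_L(p^*\omega')=0$. The map in one direction sends $\omega$ to the form $\omega'$ obtained by viewing $\omega\in\Omega^{2,0}(X)$ as an element of $\Gamma(\Lambda^2(L/A)^*)$ via the isomorphism $(L/A)^*\cong(T^{1,0}X)^*$; in the other direction, given $\omega'$, one regards $p^*\omega'\in\Gamma(\Lambda^2L^*)$ and observes it is a $(2,0)$-form on $X$, hence comes from a unique $(2,0)$-form $\omega$ on $X$. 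These two assignments are manifestly inverse to each other once well-definedness is established, since both amount to the identification $\Omega^{2,0}(X)\cong\Gamma(\Lambda^2(L/A)^*)$, so the real content is to match up the nondegeneracy and closedness conditions on each side.

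First I would record the algebraic identifications. The splitting $L=T^{1,0}X\oplus T^{0,1}X$ gives $L^*=(T^{1,0}X)^*\oplus(T^{0,1}X)^*$, and the annihilator of $A=T^{0,1}X$ inside $L^*$ is exactly $(T^{1,0}X)^*$, which is what $p^*:(L/A)^*\to L^*$ realizes; thus $p^*\omega'$ is precisely a section of $\Lambda^2(T^{1,0}X)^*=\Omega^{2,0}(X)$, and conversely every $(2,0)$-form arises this way. Under this identification, a $(2,0)$-form on $X$ is non-degenerate as a holomorphic form on $X$ exactly when it is non-degenerate as a $2$-form on the bundle $L/A\cong T^{1,0}X$, so the two non-degeneracy conditions coincide without further work.

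The one step requiring a genuine argument is the equivalence of the two closedness conditions, namely $d\omega=0$ (as a form on $X$) versus $d_L(p^*\omega')=0$ (as an $L$-form), for $\omega\in\Omega^{2,0}(X)$ identified with $p^*\omega'$. Here I would use that $d_L$ for $L=TX\otimes\fC$ is just the complexified de Rham differential $d=\partial+\ol\partial$, and that on a $(2,0)$-form $\partial\omega\in\Omega^{3,0}(X)$ while $\ol\partial\omega\in\Omega^{2,1}(X)$ lie in complementary bidegrees; hence $d\omega=0$ is equivalent to $\partial\omega=0$ and $\ol\partial\omega=0$ jointly, which is the same as $d_L(p^*\omega')=0$. (This is the point already flagged in the text, that for a $(2,0)$-form closedness implies holomorphicity.) I expect this to be the main — indeed the only — obstacle, and it is a short computation; everything else is bookkeeping with the bundle isomorphisms. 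I would conclude by noting that the two constructions are inverse to each other by the preceding paragraph, completing the proof.
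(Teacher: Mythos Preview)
Your proposal is correct and follows essentially the same approach as the paper, which does not supply a separate proof but rather treats the preceding paragraph as the argument: identify $L/A\cong T^{1,0}X$, observe that $d_L$ is the complexified de Rham differential so that $d_L(p^*\omega')=0$ amounts to $d\omega=0$, and use the bidegree decomposition $d=\partial+\ol\partial$ to match closedness (and holomorphicity) on both sides. If anything, you have been more explicit than the paper in spelling out the inverse map and the non-degeneracy check.
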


The Lie pair $(L,A)$ associated to a complex manifold $X$ as above has a structure of \emph{matched pair} of Lie algebroids \cite{mokri_matched_1997}: $L$ decomposes, as a vector bundle, as the direct sum $L=B\oplus A$ of two Lie subalgebroids $A$ and $B$.
In that case, $L/A\cong B$ is again a Lie algebroid, endowed with a representation of $A$ (i.e., a flat $A$-connection $\nabla^{A}$ on $B$), and a similar statement holds for $L/B$. Matched pairs will be denoted by $L=B\mdp A$.

A symplectic structure on the Lie pair $(B\mdp A, A)$ induced from a matched pair is a 2-form on $B$ which is $d_B$-closed and $A$-invariant: $\omega\in\Gamma(\Lambda^2B^*)$ such that $d_B\omega=0$ and $\partial^{A}\omega=0$.
Here, $\partial^A$ is the Chevalley--Eilenberg differential on $\Omega^\bullet(A,\Lambda^\bullet B^*)$ induced from $\nabla^A$ (see Eq.~\eqref{eq:definition-of-partial-A} and \eqref{eq:extension-of-connection-to-tensor-powers}).

In the case of the Lie pair $(L,A)$ associated to a complex manifold $X$, we have $A=T^{0,1}X$ and $B=T^{1,0}X$.
Hence, 
\[
  \Gamma(\Lambda^\bullet B^*)=\Omega^{\bullet,0}(X) \quad\text{and}\quad \Omega^\bullet(A,\Lambda^\bullet B^*)=\Omega^{\bullet,\bullet}(X), 
\] 
and the operators $d_B$ and $\partial^A$ are respectively 
\begin{align*}
  d_B &=\partial:\Omega^{\bullet,0}(X)\to\Omega^{\bullet+1,0}(X), \\
  \partial^A &=\ol\partial:\Omega^{\bullet,\bullet}(X) \to \Omega^{\bullet,\bullet+1}(X).
\end{align*}


\subsection{(Pre)symplectic manifolds with a \texorpdfstring{$\g$}{g}-action}

Given a Lie algebra $\g$, a $\g$-action on a manifold $M$ gives rise \cite[Example 5.5]{mokri_matched_1997} to a matched pair of Lie algebroids $TM\mdp(M\trans\g)$ where $M\trans\g$ is the transformation Lie algebroid of the $\g$-action.

Recall that, for a $\g$-action $\phi:\g\to\Gamma(TM)$, the transformation Lie algebroid $M\trans\g$ is, as a vector bundle, the trivial bundle $M\times \g$ over $M$.
Its anchor is $\rho(x,X)=\phi(X)_x$.
Its Lie bracket is defined on constant sections as the pointwise Lie bracket, and then extended to all sections by linearity and by the Leibniz rule.

The Lie bracket on the matched pair $TM\mdp (M\trans\g)$ is defined by
\[
  [v_1+X_1,v_2+X_2] = ([v_1,v_2] + [\phi(X_1),v_2] - [\phi(X_2),v_1]) + [X_1,X_2],
\]
for all \emph{constant} sections $X_1,X_2\in\Gamma(M\trans\g)$ and all vector fields $v_1,v_2\in\Gamma(TM)$.

According to the previous subsection, a symplectic structure on the Lie pair $(TM\mdp(M\trans\g),M\trans\g)$ is a Lie algebroid closed 2-form on $TM$ which is $(M\trans\g)$-invariant, i.e., a $\g$-invariant symplectic form on $M$:

\begin{prop}
  Let $M$ be a manifold with a $\g$-action. 
  There is a bijection between the $\g$-invariant symplectic structures on $M$ and the symplectic structures on the Lie pair $(TM\mdp(M\trans\g),M\trans\g)$.
\end{prop}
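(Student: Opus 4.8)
The plan is to deduce this proposition as a specialization of the general description, given in the previous subsection, of symplectic structures on Lie pairs arising from matched pairs, applied to the matched pair $TM\mdp(M\trans\g)$ associated with the $\g$-action.

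First I would recall that, as observed above following \cite[Example 5.5]{mokri_matched_1997}, the $\g$-action $\phi:\g\to\Gamma(TM)$ makes $L=TM\mdp(M\trans\g)$ into a matched pair with $B=TM$ and $A=M\trans\g$, so that $L/A\cong B=TM$ as vector bundles. By the discussion preceding the statement, a symplectic structure on the Lie pair $(L,A)$ is then the same datum as a non-degenerate 2-form $\omega\in\Gamma(\Lambda^2B^*)=\Omega^2(M)$ with $d_B\omega=0$ and $\partial^A\omega=0$. Here non-degeneracy of $\omega$ on $L/A\cong TM$ is non-degeneracy in the ordinary sense; and $d_B$, being the Lie algebroid differential of $B=TM$ with its tangent Lie algebroid structure, is the de Rham differential, so $d_B\omega=0$ just says $\omega$ is closed. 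Thus the only point requiring an argument is that $\partial^A$-closedness of $\omega$ is equivalent to $\g$-invariance.

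Next I would compute the $A$-connection $\nabla^A$ on $B$ underlying the matched pair. For a constant section $X\in\Gamma(M\trans\g)$ (that is, $X\in\g$) and a vector field $v\in\Gamma(TM)$, the bracket formula for $TM\mdp(M\trans\g)$ recalled above gives $[X,v]=[\phi(X),v]=\L_{\phi(X)}v$, and projecting onto $B=TM$ we get $\nabla^A_Xv=\L_{\phi(X)}v$. Since $\omega\in\Omega^0(A,\Lambda^2B^*)$, one has $\partial^A\omega\in\Omega^1(A,\Lambda^2B^*)$ with $(\partial^A\omega)(X)=\nabla^A_X\omega$, and unwinding \eqref{eq:extension-of-connection-to-tensor-powers} yields, for all $v_1,v_2\in\Gamma(TM)$,
\[
  (\nabla^A_X\omega)(v_1,v_2)=\phi(X)\bigl(\omega(v_1,v_2)\bigr)-\omega([\phi(X),v_1],v_2)-\omega(v_1,[\phi(X),v_2])=(\L_{\phi(X)}\omega)(v_1,v_2).
\]
Hence $\partial^A\omega=0$ if and only if $\L_{\phi(X)}\omega=0$ for every $X\in\g$, which is precisely the $\g$-invariance of $\omega$. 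Combining this with the identifications above gives the claimed bijection, and the two assignments $\omega\mapsto\omega$ are manifestly inverse to each other.

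The computation is routine; the only genuine content — and the step I would be most careful about — is the identification $\nabla^A_Xv=\L_{\phi(X)}v$ of the matched-pair $A$-action on $B$ with the Lie derivative, together with the bookkeeping of signs and conventions relating $\partial^A$ on $\Lambda^2B^*$-valued forms (via \eqref{eq:definition-of-partial-A} and \eqref{eq:extension-of-connection-to-tensor-powers}) to the Lie derivative of a 2-form, since everything else is a direct specialization of the matched-pair picture already set up in the previous subsection.
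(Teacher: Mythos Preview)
Your proposal is correct and follows exactly the approach the paper intends: the proposition is stated in the paper as an immediate consequence of the matched-pair description of symplectic structures from the previous subsection, and you have simply made that deduction explicit by identifying $d_B$ with the de Rham differential and $\partial^A$ with the Lie derivative along the infinitesimal action. The only minor point worth noting is that you verify $\partial^A\omega=0$ on constant sections $X\in\g$, which suffices by $C^\infty(M)$-linearity of $\partial^A\omega$ in its $A$-argument; this is implicit in your argument but could be stated for completeness.
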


This result can be generalized to Lie pairs with a $\g$-action, as we explain below.

Assume that a Lie algebra $\g$ acts on a Lie algebroid $L$ by derivations, in the sense that there exists a Lie algebra homomorphism $\phi:\g\to\End(\Gamma(L))$ such that 
\begin{align}
\label{eq:action-by-derivations}
  \phi(X)([l_1,l_2])&=[\phi(X)l_1,l_2] + [l_1, \phi(X) l_2]
\end{align}
for all $l_1,l_2\in\Gamma(L)$ and $X\in\g$.
This is equivalent to asking for a representation $\nabla$ of $M\trans \g$ on $L$ 
acting by derivations, in the sense that for all \emph{constant} sections $X\in\Gamma(M\trans\g)$ (with corresponding element $\overline X\in\g$),
the map $\phi(\overline X)=\nabla_X$ satisfies Eq.~\eqref{eq:action-by-derivations}.

Given such an action, one can form a
matched pair $L'=L\mdp (M\trans\g)$ as follows.
The Lie bracket is defined for constant sections $X_1,X_2\in\Gamma(M\trans\g)$ and for any $l_1,l_2\in\Gamma(L)$ by
\[
  [l_1+X_1,l_2+X_2] = ([l_1,l_2] + \nabla_{X_1}l_2 - \nabla_{X_2}l_1) + [X_1,X_2],
\]
and extended by linearity and the Leibniz rule.
The anchor is defined as the sum of the anchors of $L$ and $M\trans \g$.
This is the most general matched pair $L\mdp (M\trans\g)$ with trivial (i.e., vanishing on constant sections) representation of $L$ on $M\trans\g$, or in other words, the most general semi-direct product of $L$ and $M\trans\g$.

Assume now that $(L,A)$ is a Lie pair, and that $\nabla$ is an action of $M\trans \g$ on $L$ by derivations,
which preserves $A$.
By this, we mean that $\nabla_X\Gamma(A)\subset\Gamma(A)$ for all $X\in\Gamma(M\trans\g)$.
Then we have a new Lie pair $(L'=L\mdp(M\trans\g),A'=A\mdp(M\trans\g))$, and the following result holds.

\begin{prop}
  Let $(L,A)$ be a Lie pair.
  There is a bijection between the $\g$-invariant symplectic structures on $(L,A)$ and the symplectic structures on the Lie pair $(L\mdp(M\trans\g),A\mdp(M\trans\g))$.
\end{prop}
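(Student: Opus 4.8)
The plan is to reduce this to the previous proposition concerning symplectic structures on a Lie pair arising from a matched pair $L=B\mdp A$. First I would recall that, by the discussion preceding the statement, a $\g$-invariant symplectic structure on the Lie pair $(L,A)$ is by definition a non-degenerate $\omega\in\Gamma(\Lambda^2(L/A)^*)$ with $d_L(p^*\omega)=0$ which is moreover $\g$-invariant, meaning $\L_{\nabla_X}(p^*\omega)=0$ for all constant sections $X\in\Gamma(M\trans\g)$ (equivalently, $\partial^{M\trans\g}\omega=0$ once $\omega$ is viewed as a $(M\trans\g)$-invariant object via the induced action on $(L/A)^*$; this uses that $\nabla$ preserves $A$, so the $M\trans\g$-action descends to $L/A$). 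On the other side, a symplectic structure on $(L'=L\mdp(M\trans\g),A'=A\mdp(M\trans\g))$ is a non-degenerate $\omega'\in\Gamma(\Lambda^2(L'/A')^*)$ with $d_{L'}(p'^*\omega')=0$.

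The key geometric observation is the canonical isomorphism of vector bundles $L'/A'\cong L/A$: indeed $L'=L\oplus(M\trans\g)$ and $A'=A\oplus(M\trans\g)$ as vector bundles, so quotienting kills the $M\trans\g$ summand and leaves $L/A$. Hence there is a tautological bijection $\omega\leftrightarrow\omega'$ between $2$-forms on $L/A$ and $2$-forms on $L'/A'$, and non-degeneracy is transported for free. What remains is to check that $\omega$ is $d_L$-closed-and-$\g$-invariant \emph{if and only if} $\omega'$ is $d_{L'}$-closed. The second step is therefore to expand $d_{L'}(p'^*\omega')$ using the explicit bracket on $L'$, namely $[l_1+X_1,l_2+X_2]=([l_1,l_2]+\nabla_{X_1}l_2-\nabla_{X_2}l_1)+[X_1,X_2]$, and the fact that the anchor of $L'$ is the sum of the anchors of $L$ and $M\trans\g$. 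Evaluating $(d_{L'}(p'^*\omega'))$ on triples of sections and splitting each argument into its $L$-part and its constant $M\trans\g$-part, one sees that the purely-$L$ triples reproduce $(d_L(p^*\omega))(l_1,l_2,l_3)$; the triples with exactly one $M\trans\g$-entry $X$ reproduce $(\nabla_X$ acting on $p^*\omega)(l_1,l_2)$, i.e. the Lie-derivative/invariance term; and the triples with two or three $M\trans\g$-entries vanish because $p^*\omega$ annihilates $A'\supset M\trans\g$ (for the two-entry case one also needs $[X_1,X_2]\in\Gamma(M\trans\g)$ and $\nabla_{X_i}l\in\Gamma(L)$ projecting correctly, but every such contribution lands in the kernel or pairs a quotient class with something killed by $p'$). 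Thus $d_{L'}(p'^*\omega')=0$ splits exactly into the two conditions $d_L(p^*\omega)=0$ and $\partial^{M\trans\g}\omega=0$, which together constitute $\g$-invariant symplecticity of $(L,A,\omega)$.

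Concretely, I would organize the computation so that one does not re-derive it from scratch: the matched pair $L'=L\mdp(M\trans\g)$ has $B'$-part equal to $L$ itself only when $L$ is already a Lie subalgebroid complement, which is \emph{not} assumed here, so unlike the strictly matched-pair situation one cannot literally invoke the ``$d_B\omega=0$ and $\partial^A\omega=0$'' reformulation of the previous proposition. Instead the argument is the direct Chevalley--Eilenberg expansion above; this is the analogue, for the subalgebroid $A'=A\mdp(M\trans\g)$ inside $L'$, of the decomposition $d_{L'}=d_L\oplus\partial^{M\trans\g}$ on the relevant piece of the complex. The two constructions $\omega\mapsto\omega'$ and $\omega'\mapsto\omega$ are visibly inverse, since both are the identity under the identification $L'/A'\cong L/A$, so bijectivity is immediate once the closedness conditions are matched.

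The main obstacle I anticipate is bookkeeping the mixed terms cleanly: one must be careful that when an argument of $d_{L'}(p'^*\omega')$ is a \emph{non-constant} section of $M\trans\g$ (or an $L$-section with nontrivial anchor applied to the coefficients of another section), the Leibniz-type corrections cancel, so that the identity genuinely reduces to the statement on constant sections; and one must verify that $\nabla$ preserving $A$ is exactly what makes the $M\trans\g$-action descend to $(L/A)^*$ so that ``$\g$-invariance of $\omega$'' is even well-defined. None of this is deep, but it is the only place where a genuine check, rather than a formal identification, is required. I would state it as a lemma-free paragraph, noting that the computation is ``entirely analogous to the one in the proof of the preceding proposition, using the explicit bracket of the semi-direct product $L\mdp(M\trans\g)$,'' and leave the routine splitting to the reader.
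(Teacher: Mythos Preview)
Your proposal is correct. The paper states this proposition without proof, treating it as an evident generalization of the preceding case $(TM\mdp(M\trans\g),M\trans\g)$; your argument supplies precisely the details the paper leaves implicit, namely the canonical identification $L'/A'\cong L/A$ and the direct verification that $d_{L'}(p'^*\omega')=0$ splits into $d_L(p^*\omega)=0$ on pure $L$-triples and the $\g$-invariance condition on mixed triples. Your observation that $(L',A')$ is not itself of matched-pair type (there is no canonical Lie-subalgebroid complement to $A'$), so that one cannot literally quote the $d_B\omega=0$, $\partial^A\omega=0$ reformulation, is a worthwhile clarification that the paper glosses over.
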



\subsection{Homogeneous symplectic spaces}

\begin{prop}
\label{prop:homogeneous-symplectic-spaces}
  Let $G/H$ be a homogeneous space with $H$ connected.
  Let $\g$ and $\h$ be the Lie algebras of $G$ and $H$, respectively.
  The $G$-invariant symplectic structures $\omega$ on $G/H$ are in bijection with the symplectic structures $\Omega$ on the Lie pair $(\g,\h)$.
  In the correspondence, $\Omega$ is the value of $\omega$ at the base point $eH$.
\end{prop}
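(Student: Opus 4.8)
The plan is to reduce the statement to the classical identification of $G$-invariant differential forms on $G/H$ with $\h$-invariant alternating forms on $\g/\h\cong T_{eH}(G/H)$, and then to check that, under this identification, the condition $d_L(p^*\omega)=0$ defining a symplectic Lie pair corresponds exactly to the closedness of the associated invariant form on $G/H$.

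First I would observe that a symplectic structure $\Omega$ on the Lie pair $(\g,\h)$ is automatically $\h$-invariant, i.e.\ $\partial^\h\Omega=0$. Since $\g$ is a Lie algebroid over a point, its anchor vanishes and $d_\g$ is the Chevalley--Eilenberg differential; evaluating $d_\g(p^*\Omega)=0$ on $x,y\in\g$ and $w\in\h$, and using $\ol{[w,x]}=\nabla^\h_w\ol x$ and $\ol w=0$ together with \eqref{eq:definition-of-partial-A} and \eqref{eq:extension-of-connection-to-tensor-powers}, one gets after a short computation $(\nabla^\h_w\Omega)(\ol x,\ol y)=0$, while evaluating $d_\g(p^*\Omega)$ with two or three arguments in $\h$ gives nothing new, because $\h$ is a subalgebra. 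Hence a symplectic structure on $(\g,\h)$ is the same thing as a non-degenerate $\Omega\in\Lambda^2(\g/\h)^*$ that is $\h$-invariant and whose pullback $p^*\Omega\in\Lambda^2\g^*$ is $d_\g$-closed.

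Next, using that $H$ is connected, an alternating form on $\g/\h$ is $\h$-invariant if and only if it is invariant under the linear isotropy representation of $H$ at the base point, namely $\Ad(H)$ acting on $\g/\h$; such a form therefore extends uniquely to a $G$-invariant $2$-form $\omega$ on $G/H$ with $\omega_{eH}=\Omega$ under the identification $T_{eH}(G/H)\cong\g/\h$, and conversely the value at $eH$ of a $G$-invariant $2$-form is an $\h$-invariant element of $\Lambda^2(\g/\h)^*$. Non-degeneracy of $\Omega$ is equivalent to non-degeneracy of $\omega$ at $eH$, hence, by $G$-invariance, to non-degeneracy of $\omega$ at every point. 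Finally I would match the differentials by pulling back along the submersion $\pi:G\to G/H$: this identifies $(\Omega^\bullet(G/H))^G$ with the complex of basic (horizontal and $\h$-invariant) left-invariant forms on $G$, i.e.\ with $\{p^*\Omega:\Omega\in\Lambda^\bullet(\g/\h)^*\text{ is }\h\text{-invariant}\}$ once left-invariant forms on $G$ are identified with $\Lambda^\bullet\g^*$, and it intertwines the de Rham differential with $d_\g$; therefore $d\omega=0$ if and only if $d_\g(p^*\Omega)=0$. Combining these steps yields the announced bijection, with $\Omega=\omega_{eH}$ by construction.

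I expect the main obstacle to be the bookkeeping in the first step: verifying precisely that the single equation $d_\g(p^*\Omega)=0$ splits into ``$\Omega$ is $\h$-invariant'' together with ``the associated invariant form on $G/H$ is closed'', and being careful that the second half is genuinely independent of the choice of a linear complement to $\h$ in $\g$. The remaining ingredients---$G$-invariant forms versus isotropy-invariant forms at the base point, and invariant forms on $G/H$ versus relative Chevalley--Eilenberg cochains---are classical.
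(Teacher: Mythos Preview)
Your argument is correct. The paper actually states Proposition~\ref{prop:homogeneous-symplectic-spaces} without proof, treating it as classical; the closest thing to a paper proof is Theorem~\ref{thm:bijection-sympl-str-LA-and-fibrewise-sympl-str-GLGA} together with Lemma~\ref{lem:forms_on_L-forms_on_GammaLoverGammaA}, which establish the analogous bijection for general Lie groupoid pairs $(\Gamma_L,\Gamma_A)$ and specialize to the group case when the base is a point.

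Your approach and the paper's groupoid proof are really the same argument at different levels of generality. Your Step~1 (showing $d_\g(p^*\Omega)=0$ forces $\h$-invariance) is exactly the implication \ref{lem:forms_on_L-forms_on_GammaLoverGammaA-1prime}$\Rightarrow$\ref{lem:forms_on_L-forms_on_GammaLoverGammaA-2} in Lemma~\ref{lem:forms_on_L-forms_on_GammaLoverGammaA}, done by hand via the Cartan-type identity rather than via the flow of left-invariant vector fields; your Steps~2--3 (extending an $\h$-invariant form on $\g/\h$ to a $G$-invariant form on $G/H$ via basic left-invariant forms on $G$, and matching $d$ with $d_\g$) are the group specialization of the descent argument in that lemma and of the closedness statement at its end. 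What the paper's formulation buys is a single statement covering foliations, transformation groupoids, and homogeneous spaces simultaneously; what your direct argument buys is that it is self-contained and avoids bisections and groupoid translations, relying only on the standard dictionary between invariant forms on $G/H$ and relative Chevalley--Eilenberg cochains. Your concern about independence from a choice of complement to $\h$ is unfounded: you work with $p^*\Omega\in\Lambda^\bullet\g^*$ throughout, which is intrinsic.
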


We describe now an important subclass of homogeneous symplectic spaces.

In \cite{pikulin_invariant_2001}, Pikulin and Tevelev characterise the (covers of) adjoint orbits of semisimple Lie groups which admit invariant linear connections.
\begin{thm}[{\cite[Theorem 1]{pikulin_invariant_2001}}]
\label{thm:pikulin-tevelev}
Let $G$ be a connected semisimple Lie group over the field $\mathbb{K}=\fR$ or $\fC$, 
and let $X$ cover the adjoint orbit $\Ad(G)x$, $x\in\g$.
Let $x = x_{\mathrm s} + x_{\mathrm n}$ be the Jordan decomposition in $\g$, $\mathfrak{z}(x_{\mathrm s}) = \mathfrak{z}\oplus \g_1 \oplus \cdots \oplus \g_m$ the decomposition of the centralizer of $x_{\mathrm s}$ into a sum of the center $\mathfrak z$ and of simple ideals $\g_k$, and denote $x_{\mathrm n} = 	x_{\mathrm n}^1 + \cdots + x_{\mathrm n}^m$, where $x_{\mathrm n}^k\in\g_k$, $k = 1 , \dots , m$. 
Then $X$ admits an invariant linear connection if and only if, for each $k = 1 , \dots , m$, the following implication is true:
\[
  x_{\mathrm n}^k \neq 0 \Rightarrow \g_k \simeq \mathfrak{sp}(2n_k, \mathbb{K}), 
  \text{ $n_k \in \mathbb{N}$ and $x_{\mathrm n}^k$ is a highest root vector in $\g_k^\fC$.}
\]
\end{thm}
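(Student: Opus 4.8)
The plan is to rephrase the statement in the language of Lie pairs and their Atiyah classes, reduce it to the case of a nonzero nilpotent element in a simple Lie algebra, and treat that case by $\mathfrak{sl}_2$-representation theory. First, the translation: a connected cover $X$ of $\Ad(G)x$ has the form $G/H'$ with $H'$ an open subgroup of $H=Z_G(x)$, so $X$ determines the same Lie pair $(\g,\h)$ with $\h=\mathfrak{z}_\g(x)$, independently of the cover---in particular the criterion cannot, and in the statement does not, depend on $X$. By the Lie-algebra version of Proposition~\ref{prop:atiyah-zero-equiv-invariant-connection} (Wang's theorem on invariant connections), $X$ admits a $G$-invariant linear connection if and only if the Atiyah class $\alpha_{\g/\h}$ of $(\g,\h)$ with $E=\g/\h$ vanishes, equivalently if and only if the isotropy representation $\lambda\colon\h\to\mathfrak{gl}(\g/\h)$, $z\mapsto\ad_z$, extends to a linear $\h$-equivariant map $\g\to\mathfrak{gl}(\g/\h)$ (a Nomizu map). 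Using the identification of $\h$-modules $\g/\h\cong\ad_x(\g)=[\g,x]$, the obstruction to such an extension is a class in $H^1(\h,\operatorname{Hom}(\g/\h,\mathfrak{gl}(\g/\h)))$ produced from the $\h$-module extension $0\to\h\to\g\to\g/\h\to 0$.

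Next, I would reduce to a nonzero nilpotent element in a simple ideal. Since $x_{\mathrm s}$ and $x_{\mathrm n}$ are commuting polynomials in $x$ with no constant term, both lie in $\h$, and $\h=\mathfrak{z}_\g(x_{\mathrm s})\cap\mathfrak{z}_\g(x_{\mathrm n})$. Put $\mathfrak{l}=\mathfrak{z}_\g(x_{\mathrm s})=\ker\ad_{x_{\mathrm s}}$ and let $\mathfrak{n}$ be the sum of the nonzero eigenspaces of the semisimple operator $\ad_{x_{\mathrm s}}$, so $\g=\mathfrak{l}\oplus\mathfrak{n}$ with $[\mathfrak{l},\mathfrak{n}]\subset\mathfrak{n}$ and $\h\subset\mathfrak{l}$. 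Equivariance under $x_{\mathrm s}\in\h$ forces any Nomizu map to respect the induced splitting $\g/\h=(\mathfrak{l}/\h)\oplus\mathfrak{n}$, and since the $\mathfrak{n}$-directions are always accounted for by the tautological map $\ad^{\mathfrak{n}}\colon\mathfrak{l}\to\mathfrak{gl}(\mathfrak{n})$, one checks that $(\g,\h)$ admits a Nomizu map if and only if $(\mathfrak{l},\h)$ does; here $\mathfrak{l}/\h$ is the tangent space to the $\Ad(\mathfrak{l})$-orbit of $x_{\mathrm n}$. Now $\mathfrak{l}=\mathfrak{z}\oplus\g_1\oplus\cdots\oplus\g_m$ and $\h=\mathfrak{z}\oplus\bigoplus_k\mathfrak{z}_{\g_k}(x_{\mathrm n}^k)$, so a homogeneous space for $(\mathfrak{l},\h)$ is a product of homogeneous spaces for the pairs $(\g_k,\mathfrak{z}_{\g_k}(x_{\mathrm n}^k))$, the central factor contributing a point; and an invariant connection on a product of homogeneous spaces restricts, along each slice, to an invariant connection on every factor, while conversely the product of invariant connections is one. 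Hence the problem reduces to deciding, for $\g$ simple and $0\neq x$ nilpotent, whether $(\g,\mathfrak{z}_\g(x))$ carries a Nomizu map, the case $x_{\mathrm n}^k=0$ giving the (point) pair $(\g_k,\g_k)$ with no condition.

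For the nilpotent case I would embed $x$ into a Jacobson--Morozov $\mathfrak{sl}_2$-triple $(x,h,y)$ and use the $\ad_h$-grading $\g=\bigoplus_j\g(j)$, under which $\h=\mathfrak{z}_\g(x)=\bigoplus_{j\geq 0}\h(j)$ is the sum of a reductive part $\h(0)=\mathfrak{z}_\g(x,h,y)$ and a nilradical $\bigoplus_{j>0}\h(j)$, while $\g/\h$ and $\mathfrak{gl}(\g/\h)$ acquire explicit descriptions from the decomposition of $\g$ into irreducible $\mathfrak{sl}_2$-modules. Computing the obstruction class directly, I expect to find: (i) equivariance already under the $\mathfrak{sl}_2$-triple forces $\g$, as an $\mathfrak{sl}_2$-module, to be a sum of copies of the trivial and the $2$-dimensional modules together with a single copy of the $3$-dimensional adjoint module---that is, $\g(\pm 2)$ one-dimensional and $\g(j)=0$ for $|j|>2$, which is precisely the statement that $x$ is a highest root vector of $\g^{\fC}$ (the minimal nilpotent orbit); and (ii) imposing in addition equivariance under the nilradical of $\h$ constrains the $\h(0)$-module $\g(1)$ so tightly that it can occur only when $\g\cong\mathfrak{sp}(2n,\fK)$, in which case $\mathfrak{z}_\g(x,h,y)\cong\mathfrak{sp}(2n-2,\fK)$ and $\g(1)$ is its standard representation. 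For the converse, the connection can be exhibited: the double cover of the minimal nilpotent orbit of $\mathrm{Sp}(2n,\fK)$ is $\fK^{2n}\setminus\{0\}$ with its linear $\mathrm{Sp}(2n,\fK)$-action, on which the standard flat connection of $\fK^{2n}$---invariant under all linear automorphisms---restricts to an invariant flat connection; being $\{\pm 1\}$-invariant it descends to the orbit itself and pulls back to any further cover.

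The crux is the explicit computation of the obstruction in the nilpotent case, and in particular part (ii) above: since $\h$ is not reductive, complete reducibility is unavailable and $H^1(\h,-)$ genuinely enters, so a hands-on analysis seems unavoidable; moreover the fact that $\mathfrak{sp}$ alone is singled out among all simple $\g$ admitting a minimal nilpotent orbit comes from the interaction between the nilradical of $\h$ and the isotropy action on $\g(1)$, which must be traced case by case rather than deduced formally. By contrast the translation into Atiyah-class language and the sufficiency argument are straightforward, and the reductions to $(\mathfrak{l},\h)$ and then to the simple factors, while requiring some care with the eigenspace splittings and with invariant connections on products, are routine.
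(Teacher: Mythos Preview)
The paper does not prove this theorem: it is quoted as \cite[Theorem~1]{pikulin_invariant_2001} and used only to supply examples of symplectic Lie pairs with nonvanishing Atiyah class, so there is no argument in the paper to compare your proposal against.

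As for the proposal itself, your translation into the Atiyah-class language and the reduction to a nonzero nilpotent in a simple factor are reasonable, and the sufficiency via the flat connection on $\fK^{2n}\setminus\{0\}$ is correct. But the necessity direction remains a sketch: you write ``I expect to find'' for both (i) and (ii), and you concede that the decisive step---why only $\mathfrak{sp}$ survives among simple algebras with a minimal nilpotent orbit---``must be traced case by case rather than deduced formally.'' That is precisely the content of Pikulin--Tevelev's argument, and until that computation is carried out (or replaced by a uniform argument), what you have is an outline of where the obstruction lives rather than a proof that it is nonzero outside the symplectic case. The reduction step ``one checks that $(\g,\h)$ admits a Nomizu map if and only if $(\mathfrak{l},\h)$ does'' also deserves more than a parenthetical: you need to verify that an $\h$-equivariant extension on the $\mathfrak{n}$-block can always be produced and that the cross-terms between $\mathfrak{l}/\h$ and $\mathfrak{n}$ impose no further constraint, which is true but uses the $\ad_{x_{\mathrm s}}$-weight grading in a way worth making explicit.
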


The adjoint orbit $\Ad(G)x$ of a nilpotent element $x$ (i.e., with Jordan decomposition $x=x_{\mathrm n}$) is called a \emph{nilpotent orbit}
(see \cite{collingwood_nilpotent_1993,jantzen_nilpotent_2004}).
Every nilpotent orbit of a semisimple Lie group, being equivariantly diffeomorphic to a coadjoint orbit, is a homogeneous symplectic space.
Hence, it defines a symplectic Lie pair as in Proposition~\ref{prop:homogeneous-symplectic-spaces}.
Moreover, recall from Proposition~\ref{prop:atiyah-zero-equiv-invariant-connection} that the Atiyah class of the Lie pair associated to a homogeneous space vanishes if and only if the latter space admits invariant affine connections.

Hence, Theorem~\ref{thm:pikulin-tevelev} above provides us with a wealth of examples of symplectic Lie pairs with non-vanishing Atiyah class.

For example, among the nilpotent orbits of real simple Lie groups, only the orbits of highest root vectors of the symplectic groups $Sp(2n,\fR)$ admit invariant connections, and thus have vanishing Atiyah class.


\subsection{Groupoid picture}
\label{ssec:groupoid-picture}

In the case of Lie algebroids over $\fR$, symplectic structures on Lie pairs correspond to fibrewise 
symplectic forms on the corresponding homogeneous spaces of Lie groupoids, if the latter exist.

More precisely, assume that in a Lie pair $(L,A)$, $L$ integrates to a Lie groupoid $\Gamma_L$ and $A$ to a closed Lie subgroupoid $\Gamma_A$.
Recall from Section~\ref{sec:preliminaries} that, by \cite[Sec.\ 3]{moerdijk_integrability_2006}, $\Gamma_L/\Gamma_A$ is then a smooth (Hausdorff) manifold such that $\Gamma_L\to\Gamma_L/\Gamma_A$ is a submersion.
The quotient $\Gamma_L/\Gamma_A$ is fibered over $M$ by a surjective submersion $J:\Gamma_L/\Gamma_A\to M$ induced by the source map $\so:\Gamma_L\to M$, and there is a natural $\Gamma_L$-action on $\Gamma_L/\Gamma_A$ induced from the left-translations in $\Gamma_L$. 
We call a \emph{fibrewise differential $k$-form} on $\Gamma_L/\Gamma_A$ a smooth section of the bundle $\Lambda^k ((\ker T(J))^*)$ over $\Gamma_L/\Gamma_A$, i.e., a smooth collection of de Rham differential $k$-forms on the $J$-fibers.

\begin{thm}
\label{thm:bijection-sympl-str-LA-and-fibrewise-sympl-str-GLGA}
  Let $\Gamma_L$ be a Lie groupoid and $\Gamma_A$ a wide closed $\so$-connected Lie subgroupoid, with Lie algebroids $L$ and $A$, respectively.
  There is a bijection between symplectic structures on $(L,A)$ and $\Gamma_L$-invariant fibrewise symplectic structures on $\Gamma_L/\Gamma_A$.
\end{thm}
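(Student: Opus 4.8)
The plan is to run, fiberwise over $M$, the argument underlying the homogeneous-space statement of Proposition~\ref{prop:homogeneous-symplectic-spaces}, using the same dictionary between $\Gamma_L$-invariant objects on $\Gamma_L/\Gamma_A$ and $(L,A)$-data on $M$ that underlies Proposition~\ref{prop:atiyah-zero-equiv-invariant-connection}. First I would fix notation: write $\pi\colon\Gamma_L\to\Gamma_L/\Gamma_A$ for the quotient map and $\epsilon\colon M\hookrightarrow\Gamma_L/\Gamma_A$ for the section of $J$ sending $m$ to the class of the unit $1_m$. The fiber of $J$ through $\epsilon(m)$ is the quotient of the corresponding fiber of $\Gamma_L$ by $\Gamma_A$, so its tangent space at $\epsilon(m)$ is canonically $L_m/A_m$; this yields an isomorphism $\epsilon^{*}(\ker TJ)\cong L/A$ of vector bundles over $M$, and hence, for any fibrewise $2$-form $\Theta$ on $\Gamma_L/\Gamma_A$, a $2$-form $\omega:=\epsilon^{*}\Theta\in\Gamma(\Lambda^{2}(L/A)^{*})$. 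The theorem then amounts to showing that $\Theta\mapsto\omega$ restricts to a bijection between $\Gamma_L$-invariant fibrewise symplectic forms on $\Gamma_L/\Gamma_A$ and symplectic structures on $(L,A)$.

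Second, I would show that $\Gamma_L$-invariant fibrewise tensor fields correspond, via restriction along $\epsilon$, exactly to the $\nabla^{A}$-parallel (equivalently $\partial^{A}$-closed) sections of the matching bundle over $M$. Injectivity is immediate because every point of $\Gamma_L/\Gamma_A$ lies in the $\Gamma_L$-orbit of $\epsilon(M)$, so an invariant $\Theta$ is determined by $\omega$ through transport along the $\Gamma_L$-action. Conversely, transporting a given $\omega$ along the action yields a well-defined fibrewise form precisely when $\omega$ is invariant under the change of coset representative by elements of $\Gamma_A$; under the identification above, such a change acts on $L/A$ by the parallel transport of $\nabla^{A}$ along the source-fibers of $\Gamma_A$, and it is exactly here that the hypothesis ``$\Gamma_A$ is $\so$-connected'' is used, to make ``$\Gamma_A$-invariance of $\omega$'' equivalent to the infinitesimal condition ``$\partial^{A}\omega=0$''. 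Non-degeneracy transfers pointwise, and by invariance it suffices to test it along $\epsilon(M)$. Thus at this stage $\Gamma_L$-invariant, fibrewise non-degenerate $2$-forms correspond bijectively to non-degenerate $\omega$ with $\partial^{A}\omega=0$.

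Third, I would match fibrewise de Rham closedness of an invariant $\Theta$ with the rest of the equation $d_{L}(p^{*}\omega)=0$. Writing $\Omega=p^{*}\omega$ and grading $d_{L}\Omega$ by the number of arguments lying in $\Gamma(A)$, one checks---exactly as in the proof that presymplectic Lie algebroids correspond to symplectic Lie pairs---that the components with at least two such arguments vanish identically, that the component with exactly one equals (up to sign) $\partial^{A}\omega$, and that the component with none descends, once $\partial^{A}\omega=0$, to a genuine $3$-form on $L/A$ whose vanishing is the only residual condition. To identify this residual condition with $d^{J}\Theta=0$, where $d^{J}$ denotes the leafwise de Rham differential of the fibration $J$, I would compute $d^{J}\Theta$ at a point $\epsilon(m)$ by lifting vectors $\ol{l_{i}}\in(L/A)_{m}$ to sections $l_{i}$ of $L$, extending these to left-invariant vector fields on $\Gamma_L$, pushing them forward along $\pi$ to vector fields tangent to the $J$-fibers, and inserting them in the Cartan formula; $\pi$-relatedness of the relevant brackets together with the $\Gamma_L$-invariance of $\Theta$ collapses the result to precisely the no-$A$-argument component of $d_{L}\Omega$. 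Combining the three steps, a $\Gamma_L$-invariant fibrewise symplectic form corresponds to a non-degenerate $\omega$ with $d_{L}(p^{*}\omega)=0$, i.e.\ to a symplectic structure on $(L,A)$.

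I expect the third step to be the main obstacle. Steps one and two are essentially bookkeeping with canonical identifications, but relating the leafwise de Rham operator on $\Gamma_L/\Gamma_A$ to $d_{L}$ requires choosing vector-field extensions that are simultaneously $\pi$-projectable and tractable for bracket computations, and keeping careful track of the correction terms involving $\partial^{A}$ that appear when an argument is pushed past the subgroupoid directions. A secondary subtlety, which is precisely where $\so$-connectedness of $\Gamma_A$ is genuinely needed, is to verify that the transport formula of the second step produces a globally defined, smooth fibrewise form even though the $\Gamma_L$-action need not act transitively within a single $J$-fiber.
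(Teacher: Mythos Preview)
Your approach is correct and complete in outline, but it differs from the paper's in a way worth noting. The paper does not restrict along the unit section $\epsilon$ and decompose $d_L(p^{*}\omega)$ by $A$-degree. Instead it inserts the groupoid $\Gamma_L$ as an intermediate stage: a form $\alpha$ on $L$ is sent to its left-invariant extension $\tilde\alpha$ on $\Gamma_L$, and a single lemma (Lemma~\ref{lem:forms_on_L-forms_on_GammaLoverGammaA}) characterizes when $\tilde\alpha$ descends along $\pi\colon\Gamma_L\to\Gamma_L/\Gamma_A$. In the $\so$-connected case the condition is simply $A\subset\ker\alpha\cap\ker d_L\alpha$, and once descent holds the equivalence ``$d_L\alpha=0\Leftrightarrow d\,\ol\alpha=0$'' is immediate from the classical fact that $d_L$ on $\Omega^\bullet(L)$ corresponds to the de Rham $d$ on left-invariant forms on $\Gamma_L$, together with $\tilde\alpha=\pi^{*}\ol\alpha$ and the submersivity of $\pi$. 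The theorem then follows in a few lines by applying this lemma to $\alpha=p^{*}\omega$.

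The practical difference is precisely the step you flag as the main obstacle: your Step~3 requires an explicit Cartan-formula computation relating the leafwise differential $d^{J}$ to the zero-$A$-degree component of $d_L\Omega$, and you have to worry about projectability of the vector fields you use (left-invariant fields on $\Gamma_L$ are not, in general, $\pi$-projectable). The paper's route sidesteps this entirely by pulling back to $\Gamma_L$ and invoking the $d_L\leftrightarrow d$ correspondence once. Your decomposition of $d_L(p^{*}\omega)$ into its $A$-graded pieces is correct (the $\geq 2$ piece vanishes, the degree-$1$ piece is $\partial^{A}\omega$, and the degree-$0$ piece is the residual $3$-form on $L/A$), and it has the virtue of making the role of $\partial^{A}\omega=0$ explicit; but for the final identification with $d^{J}\Theta$ you will in effect be reproving the paper's lemma. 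If you want to keep your architecture, the cleanest fix for Step~3 is to compute on $\Gamma_L$ via $\pi^{*}\Theta=\widetilde{p^{*}\omega}$ rather than trying to push left-invariant fields down to $\Gamma_L/\Gamma_A$.
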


We first prove a lemma which will readily imply the theorem.

Denote by $\so,\ta:\Gamma_L\to M$ the source and target maps of $\Gamma_L$, and take the convention that $g_1,g_2\in\Gamma_L$ are composable as $g_1 \cdot g_2$ if $\ta(g_1)=\so(g_2)$. We consider bisections of $\Gamma_L$ as sections $h$ of $\so$ such that $\ta\circ h$ is a diffeomorphism of $M$, and denote the composition of two bisections, or of a bisection and a groupoid element, by $\star$.
For a bisection $h$ of $\Gamma_L$, denote by 
\[
  I_h:\Gamma_L\to\Gamma_L:g\mapsto h\star g \star h^{-1} = h{\left((\ta\circ h)^{-1}(\so(g))\right)} \cdot g \cdot {\left(h{\left((\ta\circ h)^{-1}(\ta(g))\right)}\right)^{-1}}
\] 
the conjugation by $h$. 
It preserves the unit submanifold and sends $\so$-fibers to $\so$-fibers, so its differential at the unit manifold induces an adjoint action $\Ad_h:L \to L$ over $(\ta\circ h)^{-1}:M\to M$.

Recall that the \emph{kernel} of a $k$-form $\alpha$ on a vector bundle $E$ is the set $\ker\alpha=\{e\in E\mid \i_e\alpha = 0\}$, where $\i_e$ is the contraction by $e$.

\begin{lem}
\label{lem:forms_on_L-forms_on_GammaLoverGammaA}
  Let $\Gamma_L$ be a Lie groupoid and $\Gamma_A$ a wide closed Lie subgroupoid, with Lie algebroids $L$ and $A$, respectively.
  Let $\alpha$ be a $k$-form on $L$. 
  Then the left-invariant $k$-form on $\Gamma_L$ corresponding to $\alpha$ descends to a fibrewise $k$-form $\ol\alpha$ on $\Gamma_L/\Gamma_A$ if and only if 
  \begin{enumerate}[label=(\arabic*)]
    \item \label{lem:forms_on_L-forms_on_GammaLoverGammaA-1} 
      $A\subset \ker \alpha$, and
    \item \label{lem:forms_on_L-forms_on_GammaLoverGammaA-2} 
      $\Ad_h^*\alpha=\alpha$ for all bisections $h$ of $\Gamma_A$.
  \end{enumerate}
  If $\Gamma_A$ is $\so$-connected, then the conditions boil down to
  \begin{enumerate}[label=(\arabic*')]
    \item \label{lem:forms_on_L-forms_on_GammaLoverGammaA-1prime}
      $A\subset \ker \alpha \cap \ker d_L\alpha$.
  \end{enumerate}
  When conditions \ref{lem:forms_on_L-forms_on_GammaLoverGammaA-1} and \ref{lem:forms_on_L-forms_on_GammaLoverGammaA-2} are satisfied, then $\alpha$ is $d_L$-closed if and only if $\ol\alpha$ is closed.
\end{lem}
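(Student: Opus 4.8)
The plan is to work out, concretely, which left-invariant forms on $\Gamma_L$ are pullbacks along the submersion $\pi:\Gamma_L\to\Gamma_L/\Gamma_A$ of fibrewise forms on the base. A fibrewise $k$-form on $\Gamma_L/\Gamma_A$ is a section of $\Lambda^k(\ker T(J))^*$, and pulling back along $\pi$ gives a $k$-form on $\Gamma_L$ that (i) annihilates $\ker T(\pi)$ and (ii) is basic in the direction of the $\Gamma_A$-orbits, i.e. invariant under the right $\Gamma_A$-action that defines the quotient, when restricted to the $J\circ\pi$-fibers. The first task is to identify $\ker T(\pi)$ inside $T\Gamma_L$: since $\pi$ is the quotient by the right action of $\Gamma_A$ (glued along $\ta$), its vertical distribution is spanned by the \emph{right}-invariant vector fields coming from $\Gamma(A)$. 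Translating to the left-trivialization $T\Gamma_L\cong \so^*L$ that identifies a left-invariant $k$-form with $\alpha\in\Gamma(\Lambda^k L^*)$, the condition that the left-invariant form kills these right-$A$-directions becomes precisely $A\subset\ker\alpha$ at every point of $M$, giving condition \ref{lem:forms_on_L-forms_on_GammaLoverGammaA-1}. This is the computation I expect to be the main obstacle: carefully matching the right-$\Gamma_A$-orbit directions to the subbundle $A\subset L$ under the left-trivialization, keeping the source/target conventions straight.

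Next, for the descended form to be well-defined (single-valued) on $\Gamma_L/\Gamma_A$ one needs its restriction to each $J\circ\pi$-fiber to be invariant under the right $\Gamma_A$-action, not just to annihilate the orbit directions. Using again the left-trivialization, right multiplication by an element $a\in\Gamma_A$ acts on the "fibre of $L$" data by the adjoint action; more precisely, comparing the value of the left-invariant form at $g$ and at $g\cdot a$ (for $a$ in $\Gamma_A$) produces exactly the requirement $\Ad_a^*\alpha=\alpha$. Since every element of an $\so$-connected $\Gamma_A$ is a finite product of elements lying on bisections of $\Gamma_A$ (bisections through a neighbourhood of any point generate the $\so$-connected component), invariance under all of $\Gamma_A$ is equivalent to invariance under the bisection-adjoint maps $\Ad_h$, $h$ a bisection of $\Gamma_A$, which is condition \ref{lem:forms_on_L-forms_on_GammaLoverGammaA-2}. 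This establishes the equivalence "$\ol\alpha$ well-defined $\iff$ \ref{lem:forms_on_L-forms_on_GammaLoverGammaA-1} and \ref{lem:forms_on_L-forms_on_GammaLoverGammaA-2}".

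For the $\so$-connected reformulation \ref{lem:forms_on_L-forms_on_GammaLoverGammaA-1prime}, I would differentiate condition \ref{lem:forms_on_L-forms_on_GammaLoverGammaA-2}. The one-parameter family $\Ad_{h_t}$ obtained from bisections $h_t$ of $\Gamma_A$ through the identity has infinitesimal generator the Lie-algebroid Lie derivative $\L_a$ along $a\in\Gamma(A)$ acting on forms on $L$; hence, given \ref{lem:forms_on_L-forms_on_GammaLoverGammaA-1}, condition \ref{lem:forms_on_L-forms_on_GammaLoverGammaA-2} is equivalent (for $\so$-connected $\Gamma_A$, so that such bisections generate) to $\L_a\alpha=0$ for all $a\in\Gamma(A)$. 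By Cartan's formula on the Lie algebroid, $\L_a\alpha=\i_a d_L\alpha+d_L\i_a\alpha$, and $\i_a\alpha=0$ by \ref{lem:forms_on_L-forms_on_GammaLoverGammaA-1}, so $\L_a\alpha=\i_a d_L\alpha$. Thus \ref{lem:forms_on_L-forms_on_GammaLoverGammaA-2} becomes $\i_a d_L\alpha=0$ for all $a\in\Gamma(A)$, i.e. $A\subset\ker d_L\alpha$, which combined with \ref{lem:forms_on_L-forms_on_GammaLoverGammaA-1} is exactly \ref{lem:forms_on_L-forms_on_GammaLoverGammaA-1prime}. (One should check the Lie-algebroid Cartan calculus applies here; it is standard for $d_L$ and $\i_a$, $\L_a=[d_L,\i_a]$.)

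Finally, for the last sentence: the exterior differential $d_L$ on left-invariant forms on $\Gamma_L$ corresponds to the Lie algebroid differential $d_L$ on $\Gamma(\Lambda^\bullet L^*)$, and it commutes with pullback along the submersion $\pi$ and with passage to fibrewise forms (the fibrewise de Rham differential is the restriction of $d$ to the $J$-fibers). Hence when \ref{lem:forms_on_L-forms_on_GammaLoverGammaA-1} and \ref{lem:forms_on_L-forms_on_GammaLoverGammaA-2} hold, $\ol\alpha$ is closed (fibrewise) iff its pullback $\pi^*\ol\alpha$, the left-invariant form attached to $d_L\alpha$... more precisely iff the left-invariant form attached to $\alpha$ is $d_L$-closed on $\Gamma_L$, which by the algebroid–de Rham correspondence is iff $d_L\alpha=0$. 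The only subtlety is that "closed" for $\ol\alpha$ means annihilated by the \emph{fibrewise} differential; but since $\pi^*$ of a fibrewise form determines and is determined by the form, and $d$ commutes with the relevant restrictions, fibrewise-closedness of $\ol\alpha$ is equivalent to $d_L\alpha=0$, completing the proof.
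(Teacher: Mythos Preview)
Your plan matches the paper's proof closely: both reduce descent along $\pi:\Gamma_L\to\Gamma_L/\Gamma_A$ to annihilation of $\ker T(\pi)$ plus right-$\Gamma_A$-invariance, translate these into (1) and (2), apply Cartan's formula on the Lie algebroid for (1'), and use the $d_L$--de~Rham correspondence for left-invariant forms (together with $\pi$ being a surjective submersion) for the final assertion.

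Two small corrections are worth making before you write it out. First, the orbit directions are $\ker T_g(\pi)=T(L_g)(A_{\ta(g)})$, i.e.\ the \emph{left}-translates of $A$: the infinitesimal generators of a right action are left-invariant, not right-invariant. This is exactly what makes the argument work, since a left-invariant form $\tilde\alpha$ annihilates $T(L_g)(A_{\ta(g)})$ iff $A\subset\ker\alpha$; with genuinely right-invariant fields you would not get such a clean reduction. Second, you invoke $\so$-connectedness of $\Gamma_A$ to pass from right-$\Gamma_A$-invariance to bisection $\Ad$-invariance, but that step needs no connectedness hypothesis (every groupoid element lies on a local bisection), and indeed the lemma asserts the equivalence with (1) and (2) without it. The $\so$-connectedness enters only where the paper uses it: in the proof of (1'), to integrate the infinitesimal condition $\L_Z\alpha=0$ for $Z\in\Gamma(A)$ back up to $R_h^*\tilde\alpha=\tilde\alpha$ for all bisections $h$, via the fact that the bisection group is then generated by exponentials.
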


\begin{proof}
  Denote $\pi:\Gamma_L\to\Gamma_L/\Gamma_A$ the projection.
  The left-invariant $k$-form $\tilde\alpha$ on $\Gamma_L$ corresponding to $\alpha$ 
  \cite[pp.\ 166--167]{weinstein_extensions_1991} will descend to a fibrewise $k$-form on 
  $\Gamma_L/\Gamma_A$ if and only if 
  \begin{enumerate}[label=(\alph*)]
    \item \label{item:form-descends-a} 
      at each point $g\in\Gamma_L$, $\tilde\alpha_g$ descends to $T_{\pi(g)}(\Gamma_L/\Gamma_A)$, and
    \item \label{item:form-descends-b}
      for two different points $g$ and $g'$ in the same $\pi$-fiber, 
      $\tilde\alpha_g$ and $\tilde\alpha_{g'}$ descend to the same form.
  \end{enumerate}
  
  Condition~\ref{item:form-descends-a} is equivalent to $\ker {T(\pi)} \subset \ker \tilde\alpha$.
  Since $\ker T_g(\pi) = T_{\ta(g)}(L_g)(A_{\ta(g)})$ and $\ker \tilde\alpha_g = T_{\ta(g)}(L_g) (\ker \alpha_{\ta(g)})$ for all $g\in\Gamma_L$, the latter is equivalent to $A\subset\ker\alpha$, which is condition \ref{lem:forms_on_L-forms_on_GammaLoverGammaA-1} of the Lemma.

  When $A\subset\ker\alpha$, condition~\ref{item:form-descends-b} is equivalent to $R_h^*\tilde\alpha = \tilde\alpha$ for all bisections $h$ of $\Gamma_A$. 
  Using the left-invariance of $\tilde\alpha$, the condition becomes $\Ad_h^*\alpha=\alpha$.
  This proves that $\tilde\alpha$ descends to $\Gamma_L/\Gamma_A$ if and only if \ref{lem:forms_on_L-forms_on_GammaLoverGammaA-1} and \ref{lem:forms_on_L-forms_on_GammaLoverGammaA-2} hold.

  Now, the right-translation $R_{\exp(tZ)}$ is the flow of the left-invariant vector field $\tilde Z$ corresponding to $Z\in \Gamma(A)$.
  Hence, if $R_h^*\tilde\alpha=\tilde\alpha$ for all bisections $h$ of $\Gamma_A$, then $\L_{\tilde Z}\tilde\alpha=0$ and thus $\L_Z\alpha=0$ for all $Z\in\Gamma(A)$. ($\L$ denotes the Lie derivative.)
  Using the Cartan formula, we get $i_Zd_L\alpha = \L_Z\alpha - d_Li_Z\alpha = 0$ for all $Z\in \Gamma(A)$, that is, $A\subset \ker d_L\alpha$.
  When $\Gamma_A$ is $\so$-connected, the converse is also true: $A\subset\ker d_L\alpha$ implies $R_h^*\tilde\alpha=\tilde\alpha$ for all bisections $h$ of $\Gamma_A$, since the group of bisections is generated by the exponentials \cite[Proposition 1.5.8]{mackenzie_general_2005}.
  This proves the equivalence of \ref{lem:forms_on_L-forms_on_GammaLoverGammaA-1} and \ref{lem:forms_on_L-forms_on_GammaLoverGammaA-2} with \ref{lem:forms_on_L-forms_on_GammaLoverGammaA-1prime} in the $\so$-connected case.

  To prove the last assertion of the Lemma, assume that conditions \ref{lem:forms_on_L-forms_on_GammaLoverGammaA-1} and \ref{lem:forms_on_L-forms_on_GammaLoverGammaA-2} are satisfied.
  From \cite[pp.\ 166--167]{weinstein_extensions_1991}, $\alpha$ is $d_L$-closed if and only if $\tilde\alpha$ is closed.
  Since $\pi$ is a surjective submersion and $\tilde\alpha=\pi^*\ol\alpha$, we have that $d\tilde\alpha=0$ if and only if $d\ol\alpha=0$.
\end{proof}

\begin{proof}[Proof of Theorem~\ref{thm:bijection-sympl-str-LA-and-fibrewise-sympl-str-GLGA}]
  Let $\omega$ be a symplectic structure on $(L,A)$. 
  Then $d_L(p^*\omega)=0$ and therefore $A\subset \ker(p^*\omega)\cap\ker d_L(p^*\omega)$, 
  and by Lemma~\ref{lem:forms_on_L-forms_on_GammaLoverGammaA} the left-invariant 2-form $\widetilde{p^*\omega}$ descends to a closed $\Gamma_L$-invariant 2-form $\alpha=\ol{p^*\omega}$ on $\Gamma_L/\Gamma_A$.
  This form is everywhere non-degenerate since it is so on the unit space and it is $\Gamma_L$-invariant.

  Conversely, let $\alpha$ be a $\Gamma_L$-invariant closed non-degenerate fibrewise 2-form on $\Gamma_L/\Gamma_A$.
  Consider the left-invariant 2-form $\pi^*\alpha$ on $\Gamma_L$ and denote by $\Omega$ the corresponding 2-form on $L$.
  Then by Lemma~\ref{lem:forms_on_L-forms_on_GammaLoverGammaA}, $A\subset \ker\Omega$ and $d_L(\Omega)=0$, and by non-degeneracy $A=\ker\Omega$.
  Hence there exists a unique non-degenerate 2-form $\omega$ on $L/A$ with $p^*\omega=\Omega$ and thus $d_L(p^*\omega)=0$.

  The two constructions are clearly inverses of each other.
\end{proof}


\section{Atiyah classes}
\label{sec:atiyah-classes}

In the next two subsections, we adapt to Lie pairs some well-known notions and results about connections and symplectic connections on manifolds.
In the third subsection, we use these to show some properties of the Atiyah class of a symplectic Lie pair, analogous to those found by Kapranov \cite{kapranov_rozanskywitten_1999} for holomorphic symplectic manifolds.


\subsection{Connections}
\label{ssec:connections}

Let $(L,A)$ be a Lie pair.

\begin{defn}
\label{def:torsion-curvature-compatible}
  Let $\nabla$ be an $L$-connection on $L/A$.
  \begin{enumerate}
    \item Its \emph{torsion} and \emph{curvature} tensors $T:L\otimes L\to L/A$ and $R:L\otimes L\to\End(L/A)$ are defined on sections by
      \begin{align*}
        T({l_1},{l_2}) &= \nabla_{l_1}\ol{l_2} - \nabla_{l_2}\ol{l_1} - \ol{[l_1,l_2]} ,
        \\
        R({l_1},{l_2}) &= \nabla_{l_1}\nabla_{l_2} - \nabla_{l_2}\nabla_{l_1} - \nabla_{[l_1,l_2]} ,
      \end{align*}
      for all $l_1,l_2\in\Gamma(L)$.

    \item The connection is said 
      \begin{enumerate}
        \item to \emph{extend the $A$-action} if $T$ descends to a map 
          \[
            \ol{T}:L/A\otimes L/A\to L/A,
          \]
        \item and to be \emph{$A$-compatible} if in addition $R$ descends to a map 
          \[
            \ol{R}:L/A\otimes L/A\to\End(L/A).
          \]
      \end{enumerate}
  \end{enumerate}
\end{defn}

In other words, a connection $\nabla$ is $A$-compatible if and only if
\begin{equation}
\label{eq:A-compatible-CSX}
  \begin{aligned}
    T(a,l)&=\nabla_a \ol{l} - \ol{[a,l]} = 0, \\
    R(a,l)&=\nabla_a \nabla_l - \nabla_l \nabla_a - \nabla_{[a,l]} = 0 ,
  \end{aligned}
\end{equation}
for all $a\in\Gamma(A)$ and $l\in \Gamma(L)$.

Lie pairs and $A$-compatible connections were introduced in \cite{chen_atiyah_2012} in terms of the two conditions in \eqref{eq:A-compatible-CSX}. 
If there existed a quotient $\Gamma_L/\Gamma_A$ integrating $L/A$, and if $\nabla$ induced a $\Gamma_L$-invariant fibrewise affine connection $\ol \nabla$ on $\Gamma_L/\Gamma_A$, then the restriction of its torsion and curvature tensors to the unit manifold $M\subset \Gamma_L/\Gamma_A$ would be the tensors $\ol T$ and $\ol R$, respectively.
The definition of $A$-compatibility in Definition~\ref{def:torsion-curvature-compatible} results from the observation that the two conditions in \eqref{eq:A-compatible-CSX} may be seen as the obstructions to the existence of those tensorial objects.

It follows from \cite{laurent-gengoux_invariant_2014} that the existence of these objects is a sufficient condition for the existence of $\ol \nabla$.
However, in what follows, we will be interested in the case where $A$-compatible connections do not exist, as only this case may lead to non-trivial weight systems.

Connections extending the $A$-action always exist \cite[Lemma 12]{chen_atiyah_2012}, and so do torsion-free connections:

\begin{prop}
  Let $\nabla$ be an $L$-connection on $L/A$ extending the $A$-action.
  Then, there exists a unique $L$-connection $\nabla'$ on $L/A$ extending the $A$-action which is torsion-free and such that $\phi$ is antisymmetric.
  Here, $\phi:L\otimes L\to L/A$ is the bundle map defined on sections by $\phi(l_1,l_2)=\nabla_{l_1}\ol{l_2}-\nabla'_{l_1}\ol{l_2}$.
\end{prop}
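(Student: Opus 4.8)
The plan is to mimic the classical construction of the Levi-Civita-type symmetrization of a connection, working at the level of the bundle maps and then checking that the two required properties ($A$-compatibility in the sense of extending the $A$-action, and torsion-freeness) are preserved. First I would start from $\nabla$ extending the $A$-action, so that its torsion $T$ descends to $\ol T\colon L/A\otimes L/A\to L/A$. The candidate is the familiar formula $\nabla'_{l_1}\ol{l_2}=\nabla_{l_1}\ol{l_2}-\tfrac12 T(l_1,l_2)$, i.e.\ $\phi(l_1,l_2)=\tfrac12 T(l_1,l_2)$. I would first verify that this $\nabla'$ is indeed an $L$-connection on $L/A$: $C^\infty$-linearity in $l_1$ follows because $T$ is tensorial in $l_1$, and the Leibniz rule in the $L/A$-argument holds because $T(l_1,fl_2)=fT(l_1,l_2)-\rho(l_2)(f)\,\ol{l_1}$ contributes a correction of $\tfrac12\rho(l_2)(f)\ol{l_1}$ — wait, one must be careful with which slot carries the anchor; I would unwind the torsion's tensoriality carefully in the second slot. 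In fact it is cleaner to note that $\nabla_{l_1}\ol{l_2}$ and $-\ol{[l_1,l_2]}$ together give the torsion's failure of symmetry, so symmetrizing $(l_1,l_2)\mapsto\nabla_{l_1}\ol{l_2}$ by subtracting half the antisymmetric part — which is exactly $\tfrac12 T$ up to the symmetric term $\ol{[l_1,l_2]}$ — yields a bona fide connection.

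Next I would check that $\nabla'$ still extends the $A$-action. By Definition~\ref{def:torsion-curvature-compatible}, this means $T'(a,l)=0$ for $a\in\Gamma(A)$, equivalently that $T'$ descends to $L/A\otimes L/A$. A direct computation gives $T'(l_1,l_2)=\nabla'_{l_1}\ol{l_2}-\nabla'_{l_2}\ol{l_1}-\ol{[l_1,l_2]}=T(l_1,l_2)-\tfrac12 T(l_1,l_2)+\tfrac12 T(l_2,l_1)-\,[\text{symmetric leftover}]$; since $T$ is already antisymmetric in the sense relevant here one finds $T'=0$ identically (this is the usual fact that $\nabla-\tfrac12 T$ is torsion-free). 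Because $T'=0$, it trivially descends, so $\nabla'$ extends the $A$-action, and moreover it is torsion-free as required. I would then observe that $\phi(l_1,l_2)=\nabla_{l_1}\ol{l_2}-\nabla'_{l_1}\ol{l_2}=\tfrac12 T(l_1,l_2)$ is antisymmetric precisely because $T$ is antisymmetric: $T(l_1,l_2)=\nabla_{l_1}\ol{l_2}-\nabla_{l_2}\ol{l_1}-\ol{[l_1,l_2]}=-T(l_2,l_1)$.

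For uniqueness, suppose $\nabla''$ is another torsion-free $L$-connection extending the $A$-action with $\phi''(l_1,l_2)=\nabla_{l_1}\ol{l_2}-\nabla''_{l_1}\ol{l_2}$ antisymmetric. Then $\psi:=\nabla'-\nabla''$ is a $C^\infty$-bilinear tensor $L\otimes L\to L/A$ (the non-tensorial terms cancel), it is symmetric because both $\phi'$ and $\phi''$ are antisymmetric so their difference $\psi=\phi''-\phi'$ is antisymmetric — hold on, I need the difference of the two \emph{torsions} to vanish: $0=T'-T''=\psi(l_1,l_2)-\psi(l_2,l_1)$, so $\psi$ is symmetric; combined with $\psi=\phi''-\phi'$ antisymmetric, we get $\psi=0$, hence $\nabla'=\nabla''$. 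The main obstacle, though largely bookkeeping, is making sure the ``descends to a map on $L/A$'' conditions interact correctly with the $\tfrac12 T$ correction — i.e.\ that subtracting $\tfrac12 T$ does not destroy the property that torsion descends — but since $T$ itself descends and the new torsion is simply zero, this is automatic; the only genuine care needed is in the $f$-linearity checks, where one must track which argument the anchor acts on in the identity $T(l_1,fl_2)$ versus $T(fl_1,l_2)$.
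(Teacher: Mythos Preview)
Your approach is correct and is essentially the paper's: both arrive at $\nabla' = \nabla - \tfrac{1}{2}T$ via the identity $\phi(l_1,l_2)-\phi(l_2,l_1)=T(l_1,l_2)-T'(l_1,l_2)$, with the paper packaging existence and uniqueness into the single equivalence $(T'=0\text{ and }\phi\text{ antisymmetric})\Leftrightarrow\phi=\tfrac12 T$. Your hesitation over $f$-linearity is unnecessary (and the tentative formula $T(l_1,fl_2)=fT(l_1,l_2)-\rho(l_2)(f)\ol{l_1}$ is wrong): $T$ is already stated to be a bundle map $L\otimes L\to L/A$, hence $C^\infty$-bilinear, so $\nabla-\tfrac12 T$ is automatically a connection, and since $T(a,l)=0$ for $a\in\Gamma(A)$ it automatically extends the $A$-action.
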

\begin{proof}
  Let $\nabla'$ be any other $L$-connection on $L/A$ extending the $A$-action
  and let $T$, $T'$ be the respective torsions of $\nabla$, $\nabla'$.
  Let $\phi$ be as defined in the statement.
  Then $\phi(l_1,l_2)-\phi(l_2,l_1)=T(l_1,l_2)-T'(l_1,l_2)$.
  Hence we have
  \begin{align*}
    \begin{cases}
      T'=0 \\
      \phi\text{ is antisymmetric}
    \end{cases}
    \qquad\Leftrightarrow\qquad
    \phi=\frac{1}{2}T
    \qquad\Leftrightarrow\qquad
    \nabla'_{l_1}\ol{l_2}=\nabla_{l_1}\ol{l_2}-\frac{1}{2}T(l_1,l_2) . 
  \end{align*}
\end{proof}


\subsection{Symplectic connections}
\label{ssec:symplectic-connections}

Let $(L,A)$ be a Lie pair and $\omega\in\Gamma(\Lambda^2(L/A)^*)$ be a non-degenerate two-form.

\begin{defn}
  An $L$-connection on $L/A$ extending the $A$-action is said to be \emph{symplectic} with respect to $\omega$ if it is torsion-free and $\omega$ is parallel, i.e., if $T=0$ and $\nabla\omega=0$.
  Here, 
  \[
    (\nabla_{l_1}\omega)(\ol{l_2},\ol{l_3})
    =
    \rho(l_1)\left(\omega(\ol{l_2},\ol{l_3})\right)
    -\omega(\nabla_{l_1}\ol{l_2},\ol{l_3})
    -\omega(\ol{l_2},\nabla_{l_1}\ol{l_3})
  \]
  for all $l_1,l_2,l_3\in\Gamma(L)$.
\end{defn}

Notice that since symplectic connections $\nabla$ extend the $A$-action, we have 
\begin{align}
\label{eq:relation-closedness-A-invariance-1}
  \nabla_a\omega &= (\partial^A\omega)(a) \\
  \label{eq:relation-closedness-A-invariance-2}
  p^*(\nabla_a\omega) &= \iota_a d_L(p^*\omega)  
\end{align}
for all $a\in\Gamma(A)$.

\begin{prop}
\label{prop:existence-symplectic-connections}
  \begin{enumerate}
    \item \label{prop:existence-symplectic-connections-1}
      There exists a symplectic $L$-connection on $L/A$ if and only if $d_L(p^*\omega)=0$.
    \item \label{prop:existence-symplectic-connections-2}
      When $d_L(p^*\omega)=0$, the set of symplectic connections on the symplectic Lie pair $(L,A,\omega)$ is an affine space over $\Gamma(S^3((L/A)^*))$.
  \end{enumerate}
\end{prop}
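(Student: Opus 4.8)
The plan is to mimic the classical construction of symplectic connections on a symplectic manifold (due to Lichnerowicz, Tondeur, et al.), transplanting it to the Lie pair setting. For part (1), one direction is essentially the content of Equation~\eqref{eq:relation-closedness-A-invariance-2}: if a symplectic $L$-connection $\nabla$ exists, then $\nabla\omega=0$ forces $\nabla_a\omega=0$ for all $a\in\Gamma(A)$, and together with torsion-freeness this gives $\iota_a d_L(p^*\omega)=0$; combined with a standard computation of $d_L(p^*\omega)(l_1,l_2,l_3)$ in terms of $\nabla$ and the vanishing torsion — where the anchor terms get absorbed into $\nabla\omega$ and the bracket terms into $T$ — one concludes $d_L(p^*\omega)=0$. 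For the converse, assume $d_L(p^*\omega)=0$. First I would invoke the existence of torsion-free $L$-connections on $L/A$ extending the $A$-action (the Proposition just above, applied to any connection extending the $A$-action, which exists by \cite[Lemma 12]{chen_atiyah_2012}); call it $\nabla$. This $\nabla$ need not preserve $\omega$, so the point is to correct it.

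The correction step is the heart of the argument. Given a torsion-free $\nabla$ extending the $A$-action, define the tensor $N\in\Gamma((L/A)^*\otimes(L/A)^*\otimes(L/A)^*)$ — pulled back appropriately to $L$ — by $N(l_1,l_2,l_3) = (\nabla_{l_1}\omega)(\ol{l_2},\ol{l_3})$. The key observations are: (i) since $d_L(p^*\omega)=0$, the expression $\sum_{\mathrm{cyc}} N(l_1,l_2,l_3)=0$, i.e. $N$ is cyclically antisymmetric in a suitable sense after using antisymmetry of $\omega$ in the last two slots; (ii) since $\nabla$ extends the $A$-action and $d_L(p^*\omega)=0$, Equation~\eqref{eq:relation-closedness-A-invariance-1}--\eqref{eq:relation-closedness-A-invariance-2} give $N(a,l_2,l_3)=0$ for $a\in\Gamma(A)$, so $N$ descends to a genuine tensor $\bar N$ on $L/A$; moreover $T=0$ means $N$ is also symmetric in its first two arguments. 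From $\bar N\in\Gamma((L/A)^*\otimes S^2(L/A)^*)$ one builds, using $\omega$ to raise an index, a candidate correction term $C\in\Gamma(S^3(L/A)^*)$ — the classical formula being roughly $\omega(C(l_1,l_2),l_3) = \tfrac13\big(N(l_1,l_2,l_3)+N(l_2,l_1,l_3)\big)$ symmetrized, adjusted so that $C$ lands in $S^3$. One then checks that $\nabla' := \nabla + (\text{the } L/A\text{-valued correction obtained from } C \text{ via } \omega^{-1})$ is still torsion-free, still extends the $A$-action (because $C$ vanishes when any argument is in $A$, by (ii)), and now satisfies $\nabla'\omega=0$ by direct substitution. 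This last verification is the routine-but-delicate calculation I would not grind through here; the main obstacle is getting the combinatorial coefficients in the definition of $C$ exactly right so that all three desiderata hold simultaneously.

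Part (2) then follows by the standard affine-space argument. Given two symplectic connections $\nabla$, $\nabla'$, the difference $D(l_1,l_2) = \nabla_{l_1}\ol{l_2}-\nabla'_{l_1}\ol{l_2}$ is a $(L/A)$-valued tensor on $L$; since both connections extend the $A$-action, $D(a,l)=0$, so $D$ descends to $\bar D:L/A\otimes L/A\to L/A$. Torsion-freeness of both gives that $\bar D$ is symmetric in its two arguments; the condition $\nabla\omega=\nabla'\omega=0$ translates, after contracting with $\omega$, into the statement that $\omega(\bar D(l_1,l_2),l_3)$ is symmetric in $l_2,l_3$ as well, hence — being already symmetric in $l_1,l_2$ — totally symmetric, i.e. it defines an element of $\Gamma(S^3((L/A)^*))$. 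Conversely, any such totally symmetric tensor yields, via $\omega^{-1}$, a correction that preserves torsion-freeness, the $A$-action, and $\nabla\omega=0$. The identification of the difference space with $\Gamma(S^3((L/A)^*))$ via the non-degenerate pairing $\omega$ is thus a bijection, giving the affine-space structure. I expect the only genuine subtlety across the whole proof to be the explicit correction formula in the converse of part (1); everything else is bookkeeping with the Leibniz rule, the definition of $d_L$, and the $A$-compatibility conditions~\eqref{eq:A-compatible-CSX}.
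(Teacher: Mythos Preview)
Your approach is essentially that of the paper: the forward direction of (1) via the identity $d_L(p^*\omega)(l_1,l_2,l_3)=\sum_{\text{cyc}}(\nabla_{l_1}\omega)(\ol{l_2},\ol{l_3})$ for any torsion-free $\nabla$, the converse via the explicit correction whose formula you give (the paper writes it as $S(\ol{l_1},\ol{l_2})=\tfrac{1}{3}(\omega^\flat)^{-1}\big(\iota_{\ol{l_2}}\nabla_{l_1}\omega+\iota_{\ol{l_1}}\nabla_{l_2}\omega\big)$, which is exactly your $\omega(C(l_1,l_2),l_3)=\tfrac{1}{3}(N(l_1,l_2,l_3)+N(l_2,l_1,l_3))$), and part (2) by the same affine-space bookkeeping with $\psi=(\id\otimes\id\otimes\omega^\flat)(\phi)$.

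Two small slips to clean up, neither fatal since your explicit formula is correct anyway: the claim that $T=0$ forces $N$ to be symmetric in its first two arguments is false (and the stated type $(L/A)^*\otimes S^2((L/A)^*)$ for $\bar N$ contradicts the antisymmetry in the last two slots you already noted); and the correction does \emph{not} land in $S^3((L/A)^*)$---it is only symmetric in its first two slots, which is all you need to preserve torsion-freeness. The paper sidesteps any symmetry analysis of the correction and simply verifies directly that $\nabla'\omega=\tfrac{1}{3}d_L(p^*\omega)=0$.
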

\begin{proof}
  A direct computation shows that, 
  for any torsion-free $L$-connection $\nabla$ on $L/A$ and for all $l_1,l_2,l_3\in \Gamma(L)$,
  \[
    d_L(p^*\omega)(l_1,l_2,l_3)
    =
    (\nabla_{l_1}\omega)(\ol{l_2},\ol{l_3})
    +
    (\nabla_{l_2}\omega)(\ol{l_3},\ol{l_1})
    +
    (\nabla_{l_3}\omega)(\ol{l_1},\ol{l_2}) .
  \]
  Hence, if there exists a symplectic connection, then $d_L(p^*\omega)=0$.
  Conversely, assume $d_L(p^*\omega)=0$. Let $\nabla$ be a torsion-free connection on $L/A$ and set
  \[
    \nabla'_{l_1}\ol{l_2} = \nabla_{l_1}\ol{l_2} + S(\ol{l_1},\ol{l_2})
  \]
  with $S:L/A\otimes L/A\to L/A$ defined by 
  \[
    S(\ol{l_1},\ol{l_2}) = \frac{1}{3} (\omega^\flat)^{-1} {\left( \iota_{\ol{l_2}}\nabla_{l_1}\omega + \iota_{\ol{l_1}}\nabla_{l_2}\omega \right)} .
  \]
  Here, $\omega^\flat:L/A\to(L/A)^*$ is the bundle map associated to $\omega$.
  From Eq.~\eqref{eq:relation-closedness-A-invariance-2}, it follows that $\nabla_a\omega=0$ for all $a\in\Gamma(A)$.
  Hence, $S$ is well-defined.
  It is then easy to check that $\nabla'$ is still torsion-free and that $\nabla'\omega=\frac{1}{3}d_L(p^*\omega)=0$. Hence, $\nabla'$ is a symplectic connection.
  This proves point \ref{prop:existence-symplectic-connections-1}.

  Let now $\nabla$ be a symplectic $L$-connection on $L/A$ and $\nabla'$ an $L$-connection on $L/A$ extending the $A$-action. 
  Since they both extend the $A$-action, their difference $\nabla-\nabla'$ defines a tensor $\phi\in \Gamma((L/A)^* \otimes (L/A)^* \otimes L/A)$.
  Then $\nabla'$ is torsion-free if and only if $\phi\in \Gamma(S^2((L/A)^*)\otimes L/A)$.
  And since $\nabla\omega=0$, the condition $\nabla'\omega=0$ is equivalent to
  \begin{dmath}
  \label{eq:difference-of-nablaomega}
    0 
    = (\nabla_{l_1}\omega)(\ol{l_2}, \ol{l_3}) - (\nabla'_{l_1}\omega)(\ol{l_2}, \ol{l_3})
    = -\omega(\phi(\ol{l_1},\ol{l_2}),\ol{l_3}) + \omega(\phi(\ol{l_1},\ol{l_3}),\ol{l_2}) 
    = -({\id}\otimes{\id}\otimes\omega^\flat)(\phi)(\ol{l_1},\ol{l_2},\ol{l_3})
       +({\id}\otimes{\id}\otimes\omega^\flat)(\phi)(\ol{l_1},\ol{l_3},\ol{l_2}) ,
  \end{dmath}
  i.e., to $\psi=({\id}\otimes{\id}\otimes \omega^\flat)(\phi)$ being in $\Gamma((L/A)^*\otimes S^2((L/A)^*))$.
  Hence, $\nabla'$ is symplectic if and only if $\psi \in \Gamma(S^3((L/A)^*))$.
  This proves point \ref{prop:existence-symplectic-connections-2}.
\end{proof}


\subsection{Symmetries of the Atiyah cocycle}
\label{ssec:symmetries-of-the-atiyah-cocycle}

Let $(L,A)$ be a Lie pair.
The curvature
\begin{align}
\label{eq:curvature}
  R(l_1,l_2)\ol{l_3} 
  = 
  \nabla_{l_1} \nabla_{l_2} \ol{l_3} - \nabla_{l_2} \nabla_{l_1} \ol{l_3} - \nabla_{[l_1,l_2]} \ol{l_3}
\end{align}
of an $L$-connection $\nabla$ on $L/A$ \emph{extending the $A$-action} vanishes on $A\wedge A$.
Hence, restricting its first argument to $A$ defines a 1-form
\[
  R^\nabla \in \Omega^1(A,(L/A)^* \otimes (L/A)^* \otimes L/A)
\]
by $R^\nabla(a)(\ol{l_1},\ol{l_2})=R(a,l_1)l_2$ for all $a\in \Gamma(A)$ and $l_1,l_2\in\Gamma(L)$.
This 1-form on $A$ is $\partial^A$-closed \cite[Theorem 16]{chen_atiyah_2012}, and is called the \emph{Atiyah cocycle} associated to $\nabla$.
The induced 1-cohomology class $\alpha_{L/A}=[R^\nabla]\in H^1(A,(L/A)^*\otimes (L/A)^* \otimes L/A)$ is, moreover, independent of the connection and is called the \emph{Atiyah class of the Lie pair} $(L,A)$.

Let $\omega\in\Gamma(\Lambda^2(L/A)^*)$ be a non-degenerate two-form.
Using the isomorphism 
$\omega^\flat:L/A\to (L/A)^*$ induced by $\omega$, $R^\nabla$ defines another 1-form
\[
  \tilde R^\nabla\in\Omega^1(A,(L/A)^* \otimes (L/A)^*\otimes (L/A)^*)
\]
by
$\tilde R^\nabla = \left(\id \otimes \id \operatorname{\otimes} \omega^\flat \right)\circ R^\nabla$.

It was proved in \cite[Proposition 55]{chen_atiyah_2012} that in fact $\alpha_{L/A}\in H^1(A,S^2((L/A)^*) \otimes L/A)$.
Below, we give another proof of that result and show that, as in the holomorphic symplectic case \cite{rozansky_hyper-kahler_1997,kapranov_rozanskywitten_1999}, the image of the 1-cocycle $\tilde R^\nabla$ (not only the cohomology class) is completely symmetric if $\nabla$ is symplectic.
This stems from the fact that the curvature of a symplectic connection acts by symplectic endomorphisms.

Let $\tau_{(12)},\tau_{(23)}:((L/A)^*)^{\otimes 3}\to ((L/A)^*)^{\otimes 3}$ be, respectively, the swapping of the first and last two components (see \eqref{eq:definition-tau} in Section~\ref{sec:preliminaries}).
\begin{lem}
\label{lem:symmetries-atiyah-cocycle}
  Let $\nabla$ be an $L$-connection on $L/A$ extending the $A$-action. Then,
  \begin{enumerate}[label=(\arabic*)]
    \item \label{lem:symmetries-atiyah-cocycle-1}
      $\left[\tilde R^\nabla-\tau_{(23)}\circ \tilde R^\nabla\right]$ is independent of $\nabla$ if $\partial^A\omega=0$;
    \item \label{lem:symmetries-atiyah-cocycle-2}
      $\tilde R^\nabla-\tau_{(23)}\circ \tilde R^\nabla=\partial^A (-\nabla\omega)$
      if $d_L(p^*\omega)=0$;
    \item \label{lem:symmetries-atiyah-cocycle-3}
      $R^\nabla - \tau_{(12)}\circ R^\nabla = \partial^A (T)$.
  \end{enumerate}
\end{lem}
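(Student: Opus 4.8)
The plan is to prove the three parts from the bottom up, since part~\ref{lem:symmetries-atiyah-cocycle-3} is the cleanest and its proof technique will guide the others. The key computational tool throughout is the Bianchi-type identity relating the $\partial^A$-differential of the torsion and curvature tensors to the antisymmetrizations of the Atiyah cocycle. Concretely, for an $L$-connection $\nabla$ on $L/A$ extending the $A$-action, one writes out $(\partial^A T)(a)(\ol{l_1},\ol{l_2})$ using formula~\eqref{eq:definition-of-partial-A} with $E = (L/A)^*\otimes(L/A)^*\otimes L/A$ (or rather the appropriate tensor bundle for $T$), expands $\nabla^A_a$ via \eqref{eq:extension-of-connection-to-tensor-powers}, and recognizes the resulting combination of $\nabla_a\nabla_{l_i}\ol{l_j}$, $\nabla_{l_i}\nabla_a\ol{l_j}$, and $\nabla_{[a,l_i]}\ol{l_j}$ terms. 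The torsion identity $T(a,l) = \nabla_a\ol l - \ol{[a,l]}$ and the Jacobi identity in $L$ then let one collect everything into $R(a,l_1)\ol{l_2} - R(a,l_2)\ol{l_1}$, which is exactly $R^\nabla(a)(\ol{l_1},\ol{l_2}) - R^\nabla(a)(\ol{l_2},\ol{l_1}) = \bigl(R^\nabla - \tau_{(12)}\circ R^\nabla\bigr)(a)(\ol{l_1},\ol{l_2})$. This proves~\ref{lem:symmetries-atiyah-cocycle-3}.

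For part~\ref{lem:symmetries-atiyah-cocycle-2}, I would run the same kind of computation but now for the $(L/A)^*$-valued object $\nabla\omega \in \Omega^1(L, (L/A)^*\otimes(L/A)^*)$ restricted appropriately, i.e.\ compute $(\partial^A(\nabla\omega))(a)(\ol{l_1},\ol{l_2},\ol{l_3})$. Expanding via \eqref{eq:definition-of-partial-A} and \eqref{eq:extension-of-connection-to-tensor-powers} produces second covariant derivatives of $\omega$; since $\nabla$ is not assumed metric here, one keeps track of all terms and uses the first Bianchi-type relation for $\nabla\omega$ derived in the proof of Proposition~\ref{prop:existence-symplectic-connections}, namely that for a torsion-free connection $d_L(p^*\omega)(l_1,l_2,l_3)$ equals the cyclic sum of $(\nabla_{l_i}\omega)$'s. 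The hypothesis $d_L(p^*\omega) = 0$ kills that cyclic sum, and what survives should be precisely $-\bigl(\tilde R^\nabla - \tau_{(23)}\circ\tilde R^\nabla\bigr)$, because the curvature acting on $\omega$ gives $(R(a,l_1)\omega)(\ol{l_2},\ol{l_3}) = -\omega(R(a,l_1)\ol{l_2},\ol{l_3}) - \omega(\ol{l_2}, R(a,l_1)\ol{l_3})$, and contracting the $\omega^\flat$'s turns this antisymmetry in the last two slots into the operator $\id - \tau_{(23)}$ applied to $\tilde R^\nabla$. I should be careful: if the chosen $\nabla$ in the statement is not torsion-free, one may first need to reduce to that case, or carry an extra torsion term; but the statement only asks for the equality as written, so I expect the torsion-free reduction (available by the Proposition in \S\ref{ssec:connections}) plus $\partial^A$-cohomology invariance to handle it.

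For part~\ref{lem:symmetries-atiyah-cocycle-1}, the hypothesis $\partial^A\omega = 0$ is weaker than $d_L(p^*\omega) = 0$, so part~\ref{lem:symmetries-atiyah-cocycle-2} does not apply directly. Instead I would argue by difference: given two connections $\nabla, \nabla'$ extending the $A$-action, their difference is a tensor $\phi \in \Gamma((L/A)^*\otimes(L/A)^*\otimes L/A)$, and I would show that $\tilde R^{\nabla} - \tilde R^{\nabla'}$ differs from $\tau_{(23)}$ applied to itself by a $\partial^A$-exact term, using that $\partial^A\omega = 0$ lets $\omega^\flat$ commute with $\partial^A$. Writing $\tilde R^\nabla - \tilde R^{\nabla'}$ explicitly in terms of $\phi$ (it is $\partial^A$ of the tensor $(\id\otimes\id\otimes\omega^\flat)(\phi)$ up to lower-order bracket terms, which themselves are $\partial^A$-exact since both connections extend the $A$-action), one checks that the combination $\bigl(\id - \tau_{(23)}\bigr)$ of that difference is again $\partial^A$-exact, so passing to cohomology classes makes it vanish.

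The main obstacle I anticipate is bookkeeping in part~\ref{lem:symmetries-atiyah-cocycle-2}: keeping the signs and the placement of the $\omega^\flat$ contraction straight while expanding a second-order covariant derivative of $\omega$ along $A$ and $L$ simultaneously, and making sure the surviving term is exactly $\partial^A(-\nabla\omega)$ with the correct sign rather than, say, $+\partial^A(\nabla\omega)$ or a cyclic variant. The conceptual content—"curvature of a symplectic connection is symplectic, hence $\tilde R^\nabla$ is symmetric in the last two slots"—is clear; the work is in verifying that this manifests at the cocycle level as the stated coboundary, not merely at the level of cohomology.
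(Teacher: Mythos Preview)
Your plans for parts~\ref{lem:symmetries-atiyah-cocycle-1} and~\ref{lem:symmetries-atiyah-cocycle-3} are essentially the paper's. For~\ref{lem:symmetries-atiyah-cocycle-3} the paper quotes the algebraic Bianchi identity with torsion and sets one argument in $\Gamma(A)$, which amounts to your direct expansion of $(\partial^A T)(a)$; for~\ref{lem:symmetries-atiyah-cocycle-1} the paper likewise compares two connections via $R^\nabla - R^{\nabla'} = \partial^A\phi$ and uses that $\omega^\flat$ and $\tau_{(23)}$ are $A$-module maps once $\partial^A\omega = 0$.

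For part~\ref{lem:symmetries-atiyah-cocycle-2} you take a different route from the paper. The paper does not expand $\partial^A(\nabla\omega)$ directly; instead it sharpens the cohomological comparison of part~\ref{lem:symmetries-atiyah-cocycle-1} to the cocycle-level identity
\[
  \bigl(\tilde R^\nabla - \tau_{(23)}\tilde R^\nabla\bigr) - \bigl(\tilde R^{\nabla'} - \tau_{(23)}\tilde R^{\nabla'}\bigr)
  = \partial^A(-\nabla\omega + \nabla'\omega)
\]
(via Eq.~\eqref{eq:difference-of-nablaomega}), and then takes $\nabla'$ \emph{symplectic}---available by Proposition~\ref{prop:existence-symplectic-connections} once $d_L(p^*\omega)=0$---so that both $\tilde R^{\nabla'} - \tau_{(23)}\tilde R^{\nabla'}$ and $\nabla'\omega$ vanish. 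Your direct computation is also correct, and in fact cleaner than you anticipate: neither the cyclic-sum identity from Proposition~\ref{prop:existence-symplectic-connections} nor any torsion-free reduction is needed. The only input is $\nabla_a\omega = 0$ (which follows from Eq.~\eqref{eq:relation-closedness-A-invariance-2} for any $\nabla$ extending the $A$-action), so that $\nabla\omega$ descends to $\Gamma\bigl(((L/A)^*)^{\otimes 3}\bigr)$ and $(\partial^A(\nabla\omega))(a)(\ol{l_1},\cdot,\cdot)$ is exactly the curvature $R(a,l_1)$ acting on $\omega$; your derivation formula for $R(a,l_1)\omega$ then yields~\ref{lem:symmetries-atiyah-cocycle-2} on the nose, for arbitrary $\nabla$. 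The paper's route has the merit of reusing the difference computation from~\ref{lem:symmetries-atiyah-cocycle-1} and foregrounding the role of symplectic connections; your route is self-contained and avoids invoking the existence result of Proposition~\ref{prop:existence-symplectic-connections} as an input.
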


\begin{proof}
  First notice that if $E$ and $F$ are two $A$-modules, and $f:E\to F$ is an $A$-module map, then 
  $f^*:\Omega^\bullet(A,E)\to\Omega^\bullet(A,F)$ is a cochain map.
  Now the map $\omega^\flat:L/A\to (L/A)^*$ is an $A$-module map since $\partial^A\omega=0$.
  Indeed, we have $\nabla^A_a\circ \omega^\flat - \omega^\flat \circ \nabla^A_a = \partial^A_a\omega$ for all $a\in \Gamma(A)$. 
  Here, $\nabla^A$ is extended as in Eq.~\eqref{eq:extension-of-connection-to-tensor-powers}.

  Moreover, if $\nabla'$ is another $L$-connection on $L/A$ extending the $A$-action then, writing $\phi=\nabla-\nabla'$, we get $R^\nabla-R^{\nabla'}=\partial^A\phi$ \cite[Theorem 16(b)]{chen_atiyah_2012}.

  It follows from the two foregoing paragraphs that, if $\partial^A\omega=0$, the cohomology class of $\tilde R^\nabla=({\id}\otimes{\id}\otimes{\omega^\flat})\circ R^\nabla$ is independent of $\nabla$.
  Since $\tau_{(23)}$ is also an $A$-module map, the same holds for $\tilde R^\nabla-\tau_{(23)}\circ \tilde R^\nabla$, proving the first item.

  Actually, we have
  \begin{dmath}
    (\tilde R^\nabla-\tau_{(23)}\circ \tilde R^\nabla)-(\tilde R^{\nabla'}-\tau_{(23)}\circ \tilde R^{\nabla'})
    = (\tilde R^\nabla - \tilde R^{\nabla'}) - \tau_{(23)} \circ (\tilde R^\nabla - \tilde R^{\nabla'})
    = ({\id} - \tau_{(23)}) ({\id}\otimes{\id}\otimes\omega^\flat)(\partial^A\phi)
    = \partial^A(({\id} - \tau_{(23)}) ({\id}\otimes{\id}\otimes\omega^\flat)(\phi))
    \label{eq:sym23-of-tildeRnabla} = \partial^A(-\nabla\omega+\nabla'\omega) ,
  \end{dmath}
  where the last step follows from \eqref{eq:difference-of-nablaomega}.

  Now, if $\omega$ is closed, there exists a symplectic connection $\nabla'$, for which the curvature $R'$ (see Eq.~\eqref{eq:curvature}) is thus a symplectic endomorphism:
  \begin{align}
  \label{eq:curvature-is-symplectic-endomorphism}
    \omega(R'(l_1,l_2)\ol{l_3},\ol{l_4}) + \omega(\ol{l_3},R'(l_1,l_2)\ol{l_4}) = 0,
  \end{align}
  for all $l_1,l_2,l_3,l_4\in\Gamma(L)$. 
  Taking $l_1\in\Gamma(A)$, Eq.~\eqref{eq:curvature-is-symplectic-endomorphism} yields $\tilde R^{\nabla'}-\tau_{(23)}\circ \tilde R^{\nabla'}=0$. Inserting this last equation and $\nabla'\omega=0$ in 
  Eq.~\eqref{eq:sym23-of-tildeRnabla},
  we get $\tilde R^\nabla-\tau_{(23)}\circ \tilde R^\nabla=\partial^A(-\nabla\omega)$ for any $\nabla$ extending the $A$-action.
  This proves the second item.

  Let us now prove the last item.
  The curvature of $\nabla$ satisfies the algebraic Bianchi identity
  \[
    R(l_1,l_2)\ol{l_3}
    - T(l_1,T(l_2,l_3)) - (\nabla_{l_1}T)(l_2,l_3)
    +\text{cycl.\ perm.}
    = 0
    .
  \]
  Taking $l_3=a\in \Gamma(A)$ yields $R(a,l_1)\ol{l_2} - R(a,l_2)\ol{l_1} = (\nabla_aT)(l_1,l_2)$, i.e., $R^\nabla - \tau_{(12)}\circ R^\nabla = \partial^A (T)$.
\end{proof}

The following Theorem directly follows from items \ref{lem:symmetries-atiyah-cocycle-2} and \ref{lem:symmetries-atiyah-cocycle-3} of the previous Lemma.

\begin{thm}
  Let $(L,A,\omega)$ be a symplectic Lie pair, and $\nabla$ an $L$-connection on $L/A$ extending the $A$-action. Then, 
  \begin{enumerate}[label=(\arabic*)]
    \item the Atiyah class%
      \footnote{We also call \emph{Atiyah class} the composition $\tilde\alpha_{L/A}=({\id}\otimes{\id}\otimes\omega^\flat)\circ\alpha_{L/A}$ of $\alpha_{L/A}$ with the $A$-module morphism ${\id}\otimes{\id}\otimes\omega^\flat$.}
      $\tilde\alpha_{L/A}=[\tilde R^\nabla]$ of $(L,A,\omega)$ lies in $H^1(A,S^3((L/A)^*))$, and
    \item when $\nabla$ is symplectic, the 1-cocycle $\tilde R^\nabla$ itself has totally symmetric image.
  \end{enumerate}
\end{thm}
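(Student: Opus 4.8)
The plan is to deduce the Theorem directly from items~\ref{lem:symmetries-atiyah-cocycle-2} and~\ref{lem:symmetries-atiyah-cocycle-3} of Lemma~\ref{lem:symmetries-atiyah-cocycle}, using only two soft facts: that the permutation operators $\tau_{(12)},\tau_{(23)}$ and the bundle map ${\id}\otimes{\id}\otimes\omega^\flat$ are $A$-module morphisms, hence cochain maps on the complexes $\Omega^\bullet(A,-)$ (for $\omega^\flat$ this uses $\partial^A\omega=0$, which holds for a symplectic Lie pair by \eqref{eq:relation-closedness-A-invariance-1}--\eqref{eq:relation-closedness-A-invariance-2}); and that the transpositions $(12),(23)$ generate $S_3$.

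For part~(1): first note that $R^\nabla$, hence $\tilde R^\nabla=({\id}\otimes{\id}\otimes\omega^\flat)\circ R^\nabla$, is $\partial^A$-closed, so it defines a class in $H^1(A,((L/A)^*)^{\otimes 3})$; the claim is that this class admits a representative valued in the symmetric part $S^3((L/A)^*)$. Since $\tau_{(12)}$ and $\tau_{(23)}$ act only on the first two tensor slots, they commute with ${\id}\otimes{\id}\otimes\omega^\flat$, so item~\ref{lem:symmetries-atiyah-cocycle-3} gives $\tilde R^\nabla-\tau_{(12)}\circ\tilde R^\nabla=({\id}\otimes{\id}\otimes\omega^\flat)(\partial^A T)=\partial^A\bigl(({\id}\otimes{\id}\otimes\omega^\flat)(T)\bigr)$, and item~\ref{lem:symmetries-atiyah-cocycle-2} gives $\tilde R^\nabla-\tau_{(23)}\circ\tilde R^\nabla=\partial^A(-\nabla\omega)$; both right-hand sides are coboundaries. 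Hence $[\tilde R^\nabla]$ is fixed by $\tau_{(12)}$ and by $\tau_{(23)}$, therefore by every $\tau_\sigma$, $\sigma\in S_3$. Consequently the total symmetrization $\tfrac{1}{6}\sum_{\sigma\in S_3}\tau_\sigma\circ\tilde R^\nabla$ is a $\partial^A$-cocycle with values in $\Gamma(S^3((L/A)^*))$ that is cohomologous to $\tilde R^\nabla$, which puts $\tilde\alpha_{L/A}=[\tilde R^\nabla]$ in $H^1(A,S^3((L/A)^*))$.

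For part~(2): when $\nabla$ is symplectic one has $T=0$ and $\nabla\omega=0$, so the same two identities become equalities of forms, $\tilde R^\nabla=\tau_{(23)}\circ\tilde R^\nabla$ and (applying ${\id}\otimes{\id}\otimes\omega^\flat$ to item~\ref{lem:symmetries-atiyah-cocycle-3}) $\tilde R^\nabla=\tau_{(12)}\circ\tilde R^\nabla$. As $(12)$ and $(23)$ generate $S_3$, $\tilde R^\nabla$ is pointwise invariant under every $\tau_\sigma$, i.e.\ its image is totally symmetric.

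There is essentially no obstacle left at this stage: all of the substance has been absorbed into Lemma~\ref{lem:symmetries-atiyah-cocycle}, and in particular into item~\ref{lem:symmetries-atiyah-cocycle-2}, whose proof rests on the existence of a symplectic connection (Proposition~\ref{prop:existence-symplectic-connections}) and on \eqref{eq:curvature-is-symplectic-endomorphism}, the fact that the curvature of a symplectic connection acts by infinitesimally symplectic endomorphisms. The only care needed here is the bookkeeping recalled in the first paragraph---that $\tau_\sigma$ and ${\id}\otimes{\id}\otimes\omega^\flat$ are cochain maps, so that averaging a cocycle over $S_3$ again yields a cocycle and coboundaries remain coboundaries, and that two-fold symmetry upgrades to total symmetry precisely because $(12)$ and $(23)$ generate $S_3$.
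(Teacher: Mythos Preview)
Your proof is correct and takes exactly the same approach as the paper, which simply records that the theorem follows directly from items~\ref{lem:symmetries-atiyah-cocycle-2} and~\ref{lem:symmetries-atiyah-cocycle-3} of Lemma~\ref{lem:symmetries-atiyah-cocycle}; you have merely (and helpfully) spelled out the routine deduction. One small slip of the pen: $\tau_{(23)}$ does not act only on the first two tensor slots, but this is harmless since you only actually use the commutation of $\tau_{(12)}$ with ${\id}\otimes{\id}\otimes\omega^\flat$---item~\ref{lem:symmetries-atiyah-cocycle-2} is already stated for $\tilde R^\nabla$.
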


To conclude this section, we quote a result from \cite{chen_atiyah_2012} which we will need in the next section.
The result is the cocycle version of the fact that the Atiyah class $\alpha_{L/A}$ is a kind of Lie bracket which turns $L/A[-1]$ into a Lie algebra object in the bounded derived category of $U(A)$-modules (see \cite{roberts_rozanskywitten_2010,laurent-gengoux_kapranov_2014} for more information about that interpretation).
Denote by $R_2:L/A\otimes L/A \to A^* \otimes L/A$ the bundle map defined by $R_2(b_1,b_2)(a)=R^\nabla(a)(b_1,b_2)$ for all $a\in\Gamma(A)$ and $b_1,b_2\in\Gamma(L/A)$.

\begin{thm}[{\cite[Corollary 71]{chen_atiyah_2012}}]
\label{thm:Corollary71-from-CSX}
  Define a 2-form $\psi\in\Omega^2(A,((L/A)^*)^{\otimes 3} \otimes L/A)$ by setting, for all $b_1,b_2,b_3\in\Gamma(L/A)$,
  \[
    \psi(b_1,b_2,b_3)
    =
    \left\lceil R_2(b_1,R_2(b_2,b_3)) \right\rfloor 
    + \left\lceil R_2(R_2(b_1,b_2),b_3) \right\rfloor 
    + \left\lceil R_2(b_2,R_2(b_1,b_3)) \right\rfloor
    .
  \]
  Here, the notation $\left\lceil R_2(b_1,R_2(b_2,b_3)) \right\rfloor \in \Omega^2(A,L/A)$ means taking the wedge product on $A$-forms and the composition ``as written'' on the $(L/A)^*$-parts, i.e., 
  \[
    \lceil R_2(b_1,R_2(b_2,b_3)) \rfloor (a_1,a_2) 
    = 
    R^\nabla(a_1)(b_1,R^\nabla(a_2)(b_2,b_3)) - R^\nabla(a_2)(b_1,R^\nabla(a_1)(b_2,b_3)) 
  \]
  for all $a_1,a_2\in\Gamma(A)$ and $b_1,b_2,b_3\in\Gamma(L/A)$.

  Then $\psi$ is a coboundary, i.e., $\psi=\partial^A\phi$ for some $\phi\in\Omega^1(A,((L/A)^*)^{\otimes 3} \otimes L/A)$.
\end{thm}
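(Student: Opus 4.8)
The plan is to reduce the statement to the Jacobi-type identity satisfied by the Atiyah cocycle $R^\nabla$, which is essentially the statement that $\alpha_{L/A}$ endows $L/A[-1]$ with the structure of a Lie algebra object in the derived category of $U(A)$-modules. Concretely, the form $\psi$ is built out of two nested applications of $R_2$, antisymmetrized over the two $A$-arguments and cyclically-plus-one symmetrized over the three $L/A$-arguments, and one expects $\psi$ to measure the failure of $R_2$ to be a ``Lie bracket up to homotopy'': the homotopy being exactly the $\phi$ in the statement. So the first step is to recall from \cite{chen_atiyah_2012} (in the vicinity of Corollary 71, building on Theorem 16 and the Bianchi-type identities established there) that the total connection $\nabla$ on $L/A$ extending the $A$-action gives, via its curvature $R$, a bracket whose Jacobiator on $\Gamma(L/A)$-arguments is controlled by $\partial^A$ of a lower term.

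The key steps, in order, are: (i) write $\psi$ in terms of $R^\nabla$ using the ``$\lceil\cdot\rfloor$'' convention, so that $\psi(b_1,b_2,b_3)(a_1,a_2)$ is a fully explicit alternating sum in $a_1,a_2$ of triple-nested curvature expressions $R^\nabla(a_i)(b_j,R^\nabla(a_k)(b_l,b_m))$; (ii) invoke the algebraic Bianchi identity for $R$ (already used in the proof of Lemma~\ref{lem:symmetries-atiyah-cocycle}\ref{lem:symmetries-atiyah-cocycle-3}) together with the second Bianchi identity $\partial^A R^\nabla = 0$ (i.e., \cite[Theorem 16]{chen_atiyah_2012}) to rewrite the nested terms; (iii) identify the resulting expression as $\partial^A\phi$ for an explicit $\phi\in\Omega^1(A,((L/A)^*)^{\otimes 3}\otimes L/A)$ built from $R_2$ composed with a chosen $A$-compatible-up-to-the-Atiyah-class splitting, or more precisely from the ``second-order'' term of the curvature of an auxiliary $L$-connection. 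The cleanest route is probably to quote \cite[Corollary 71]{chen_atiyah_2012} directly, since the excerpt explicitly labels this Theorem as that corollary: in that case the proof is a one-line citation, and the only real work is checking that the sign and symmetrization conventions in $\psi$ here match those of the reference.

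The main obstacle I expect is bookkeeping rather than conceptual: matching the precise combinatorial shape of $\psi$ (which three nested terms, with which of $b_1,b_2,b_3$ on the outside, and the antisymmetrization in $a_1,a_2$ implicit in the $\Omega^2(A,-)$ coefficient) against the Jacobiator as it appears in \cite{chen_atiyah_2012}, where the coefficients may be organized differently (e.g.\ via the PBW/Poincaré--Birkhoff--Witt isomorphism for $U(A)$-modules used there). If the conventions do not literally coincide, one must either symmetrize the reference's identity or absorb the discrepancy into a redefinition of $\phi$ by a $\partial^A$-exact term, using that $\partial^A$ is a differential; since we only claim $\psi$ is \emph{a} coboundary, any such adjustment is harmless. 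I would therefore structure the proof as: state the correspondence with \cite[Corollary 71]{chen_atiyah_2012}, verify the conventions match up to an explicit relabeling, and conclude.
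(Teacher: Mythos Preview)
Your proposal is correct and matches the paper's approach: the paper does not prove this theorem at all but simply quotes it verbatim from \cite[Corollary 71]{chen_atiyah_2012}, so your suggestion to cite the reference directly and only verify that the sign and symmetrization conventions agree is exactly what is done (indeed, the paper does even less, leaving the convention-matching implicit).
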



\section{Weight systems}
\label{sec:weight-systems}

In this section, we follow Rozansky and Witten \cite{rozansky_hyper-kahler_1997}, Kapranov \cite{kapranov_rozanskywitten_1999}, and Sawon \cite{sawon_rozanskywitten_1999} to build, from any symplectic Lie pair $(L,A,\omega)$, a weight system on trivalent diagrams with values in the Lie algebroid cohomology of $A$.


\subsection{Trivalent diagrams}
This subsection contains a recollection of notions about trivalent diagrams.
We refer to Bar-Natan \cite{bar-natan_vassiliev_1995} for more information and references about the subject.
We briefly introduce trivalent graphs and two notions of orientation on them. 
We then follow Kapranov \cite{kapranov_rozanskywitten_1999} to compare the two notions of orientation.

\begin{defn}
\label{def:trivalent-graph}
  A \emph{trivalent graph} is a (possibly disconnected) finite graph having only trivalent vertices.
\end{defn}

We will need two different, but equivalent, notions of orientation of a trivalent graph, one called linear orientation and the other cyclic orientation.
Both are defined by equivalence relations, which we describe now.

\begin{defn}
\label{def:linear-orientation-trivalent-graph}
  \begin{enumerate}
    \item A \emph{representative of a linear orientation} of a trivalent graph is a pair composed of an orientation of the edges and a linear ordering of the vertices (Fig.~\ref{fig:examples-orientation}).

    \item Two linear orientation representatives differing on $n$ edges and by a permutation $\sigma$ of the ordering of the vertices are considered \emph{equivalent} if $(-1)^n=\sign(\sigma)$.

    \item A \emph{linear orientation} of a trivalent graph is an equivalence class of linear orientation representatives.
  \end{enumerate}
\end{defn}

\begin{defn}
\label{def:cyclic-orientation-trivalent-graph}
  \begin{enumerate}
    \item A \emph{representative of a cyclic orientation} of a trivalent graph is the data, at each vertex, of a cyclic ordering (i.e., in a planar representation of the graph around that vertex, a choice of clockwise or anticlockwise ordering) of the three flags at that vertex (Fig.~\ref{fig:examples-orientation}).

    \item Two cyclic orientation representatives are considered \emph{equivalent} if they differ at an even number of vertices.

    \item A \emph{cyclic orientation} of a trivalent graph is an equivalence class of cyclic orientation representatives.
  \end{enumerate}
\end{defn}

\begin{figure}[h]
\label{fig:examples-orientation}
  \begin{center}
    ~
    \hfill
    \includegraphics{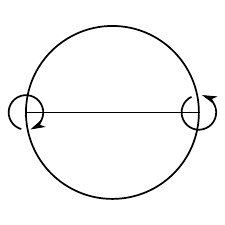}
    \hfill
    \includegraphics{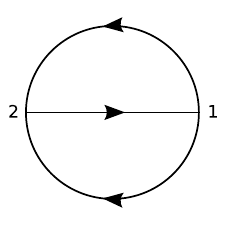}
    \hfill
    ~
  \end{center}
  \caption{Representatives of a cyclic (on the left) and a linear (on the right) orientations of the $\Theta$ graph.}
\end{figure}

\begin{rem}
  When representing a trivalent diagram in the plane, if no orientation is specified, 
  by default it is assumed to have the cyclic orientation defined by the anticlockwise ordering at each vertex.
\end{rem}

It is readily seen that a trivalent graph always satisfies $3\,\#\{\text{vertices}\}=2\,\#\{\text{edges}\}$.
Since its number of vertices is thus even, the \emph{order} of a graph may be defined as half the number of vertices.
A \emph{flag} is a pair comprising a vertex and one of the three edges attached to that vertex.

Let us borrow some more notation from \cite[Sec. 5.2]{kapranov_rozanskywitten_1999} (see also the references therein).
\begin{enumerate}
  \item For a finite set $S$, denote by $\fR^S$ the real vector space generated by the elements of $S$, and by $\det(S)$ the top exterior power of $\fR^S$. 
  \item For a trivalent graph $\gamma$, denote by $V(\gamma)$, $E(\gamma)$, $F(\gamma)$ the sets of vertices, edges and flags of $\gamma$.
    Let $F_v(\gamma)$ be the set of the three flags with vertex $v$, and $F^e(\gamma)$ be the set of the two flags with edge $e$. 
\end{enumerate}
With this notation, the choice of a direction (i.e., of a non-zero element) in the 1-dimensional space $\det(F_v(\gamma))$ is a choice of cyclic ordering at the vertex $v$, and similarly  a direction in $\det(F^e(\gamma))$ is an orientation of the edge $e$.

Since a cyclic orientation is the choice of a cyclic ordering at each vertex (i.e., a direction in $\det(F_v(\gamma))$ for every vertex $v$), two sets of choices being equivalent if the directions differ at an even number of vertices, we obtain the following Lemma.
\begin{lem}
\label{lem:cyclic-orientation}
  A \emph{cyclic orientation} on a graph $\gamma$ is a choice of direction in the 1-dimensional space 
  \[
    \bigotimes_{v\in V(\gamma)} \det(F_v(\gamma)) . 
  \]
\end{lem}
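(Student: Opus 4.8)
The claim identifies a cyclic orientation with a direction in the one-dimensional space $\bigotimes_{v\in V(\gamma)}\det(F_v(\gamma))$, so the plan is simply to unwind the two definitions and match them up. First I would observe that for a single vertex $v$, the set $F_v(\gamma)$ has exactly three elements, so $\fR^{F_v(\gamma)}\cong\fR^3$ and $\det(F_v(\gamma))=\Lambda^3\fR^{F_v(\gamma)}$ is one-dimensional; a nonzero element of it is, up to positive scalar, of the form $f_1\wedge f_2\wedge f_3$ for an ordering $(f_1,f_2,f_3)$ of the three flags, and two such wedge products agree (up to positive scalar) if and only if the orderings differ by an even permutation, i.e., define the same cyclic ordering of the three flags. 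Thus a direction in $\det(F_v(\gamma))$ is precisely a choice of cyclic ordering at $v$, which by Definition~\ref{def:cyclic-orientation-trivalent-graph}(1) is exactly the datum carried at $v$ by a representative of a cyclic orientation.

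Next I would take the tensor product over all vertices. A representative of a cyclic orientation of $\gamma$ is, by definition, such a choice at \emph{every} vertex, which by the previous paragraph is the same as a choice of direction in $\det(F_v(\gamma))$ for every $v$, hence a choice of direction in the product $\bigotimes_{v\in V(\gamma)}\det(F_v(\gamma))$ (this tensor product of one-dimensional spaces is again one-dimensional, so "direction" means "nonzero element up to positive scalar"). It remains to check that the equivalence relation matches: flipping the cyclic ordering at a vertex $v$ multiplies the chosen element of $\det(F_v(\gamma))$ by a negative scalar (an odd permutation of the three flags), hence flipping the orderings at a set $W$ of vertices multiplies the element of $\bigotimes_v\det(F_v(\gamma))$ by $(-1)^{\#W}$. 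So two representatives are equivalent in the sense of Definition~\ref{def:cyclic-orientation-trivalent-graph}(2) (they differ at an even number of vertices) if and only if the corresponding elements of $\bigotimes_v\det(F_v(\gamma))$ differ by a positive scalar, i.e., determine the same direction. Therefore cyclic orientations (equivalence classes of representatives) are in bijection with directions in $\bigotimes_{v\in V(\gamma)}\det(F_v(\gamma))$, which is the assertion.

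This argument is essentially a bookkeeping exercise, so there is no real obstacle; the only point requiring a little care is the sign bookkeeping in the second paragraph, namely making sure that the parity of the number of vertices where the cyclic ordering is flipped corresponds exactly to the sign by which the generator of the one-dimensional tensor product changes. Since each single flip contributes a factor $-1$ and these factors multiply across vertices, the correspondence between "even number of flips" and "positive scalar" is immediate. I would present the proof in two or three sentences, citing the preceding discussion that already set up $\det(S)$ and $F_v(\gamma)$, and noting that the same reasoning specialized to $\det(F^e(\gamma))$ recovers the stated fact that a direction there is an orientation of the edge $e$.
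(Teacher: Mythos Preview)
Your proposal is correct and follows exactly the same approach as the paper: the paper's justification is the single sentence immediately preceding the lemma, which says that a cyclic orientation is a choice of direction in each $\det(F_v(\gamma))$ with the equivalence relation given by parity of the number of flips, and your write-up simply spells out the sign bookkeeping behind that sentence.
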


Similarly, a linear orientation is the choice of a linear ordering on the set $V(\gamma)$ of vertices and an orientation of each edge (that is, a direction in $\det(F^e(\gamma))$ for every edge $e$), up to the equivalence relation of Definition~\ref{def:linear-orientation-trivalent-graph}.
Hence,
\begin{lem}
\label{lem:linear-orientation}
  A \emph{linear orientation} is a choice of direction in the 1-dimensional space 
  \[
  \det(V(\gamma))\otimes \bigotimes_{e\in E(\gamma)} \det(F^e(\gamma)) . 
\]
\end{lem}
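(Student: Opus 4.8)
The statement to prove, Lemma~\ref{lem:linear-orientation}, identifies a linear orientation of a trivalent graph $\gamma$ with a choice of direction in the one-dimensional real vector space $\det(V(\gamma))\otimes\bigotimes_{e\in E(\gamma)}\det(F^e(\gamma))$. The plan is to run exactly the same bookkeeping argument that established Lemma~\ref{lem:cyclic-orientation}, but now tracking \emph{two} kinds of data simultaneously — an ordering of the vertices and an orientation of each edge.

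First I would recall that a representative of a linear orientation is a pair $(o,\prec)$, where $o$ assigns to each edge $e$ one of the two orientations and $\prec$ is a linear ordering of $V(\gamma)$. An orientation of $e$ is precisely a choice of non-zero element of the one-dimensional space $\det(F^e(\gamma))=\Lambda^2\fR^{F^e(\gamma)}$ up to positive rescaling, since $F^e(\gamma)$ has two elements and swapping them negates the wedge; so $o$ is the same as a choice of direction in $\bigotimes_{e\in E(\gamma)}\det(F^e(\gamma))$. Likewise, a linear ordering $v_1\prec v_2\prec\cdots\prec v_n$ of the $n$ vertices determines the non-zero element $v_1\wedge v_2\wedge\cdots\wedge v_n\in\det(V(\gamma))$, hence a direction in $\det(V(\gamma))$; two orderings give the same direction iff they differ by an even permutation, i.e.\ $\sign(\sigma)=+1$. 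Tensoring, a representative $(o,\prec)$ determines a direction in $\det(V(\gamma))\otimes\bigotimes_{e\in E(\gamma)}\det(F^e(\gamma))$.

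Next I would check that this assignment descends to the equivalence classes and is a bijection. By Definition~\ref{def:linear-orientation-trivalent-graph}, two representatives $(o,\prec)$ and $(o',\prec')$ are equivalent precisely when, if they differ on $n$ edges and by a permutation $\sigma$ of the vertex ordering, then $(-1)^n=\sign(\sigma)$. Changing the orientation of one edge multiplies the corresponding factor of $\bigotimes_e\det(F^e(\gamma))$ by $-1$, so differing on $n$ edges contributes a sign $(-1)^n$; changing the vertex ordering by $\sigma$ multiplies the $\det(V(\gamma))$ factor by $\sign(\sigma)$. The two representatives therefore give the same direction in the tensor product iff $(-1)^n\sign(\sigma)=+1$, i.e.\ iff $(-1)^n=\sign(\sigma)$, which is exactly the equivalence relation. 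Thus the map from linear-orientation representatives to directions in $\det(V(\gamma))\otimes\bigotimes_{e\in E(\gamma)}\det(F^e(\gamma))$ is well-defined and injective on equivalence classes; it is surjective because any direction is realized by choosing any ordering of $V(\gamma)$ and any orientation of each edge whose associated basis vector is a positive multiple of the given one.

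This is essentially a routine unwinding of definitions, so there is no real obstacle; the only point that needs a moment's care is keeping the two sign contributions — from reordering vertices and from reorienting edges — consistent with the single combined sign condition $(-1)^n=\sign(\sigma)$ in the definition, and observing that it is their \emph{product} that must be trivial for the tensor to be unchanged. Since Lemma~\ref{lem:cyclic-orientation} was proved by the same device (tracking only the per-vertex $\det(F_v(\gamma))$ factors), I would present this as the parallel statement and give the argument in the same concise style.

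\begin{proof}
  An orientation of an edge $e$ is the choice of a direction in the one-dimensional space $\det(F^e(\gamma))$, since $F^e(\gamma)$ consists of two flags and interchanging them reverses the sign of the top exterior power; hence an orientation of all edges is a direction in $\bigotimes_{e\in E(\gamma)}\det(F^e(\gamma))$. A linear ordering $v_1\prec\cdots\prec v_n$ of the vertex set $V(\gamma)$ determines the non-zero element $v_1\wedge\cdots\wedge v_n\in\det(V(\gamma))$, and a permutation $\sigma$ of the ordering multiplies this element by $\sign(\sigma)$. Thus a representative of a linear orientation, being the data of an orientation of every edge together with a linear ordering of $V(\gamma)$, determines a direction in $\det(V(\gamma))\otimes\bigotimes_{e\in E(\gamma)}\det(F^e(\gamma))$.

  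Reversing the orientation of a single edge changes the sign of the corresponding factor, so two representatives differing on $n$ edges contribute a relative sign $(-1)^n$; changing the vertex ordering by a permutation $\sigma$ contributes the relative sign $\sign(\sigma)$. The two representatives therefore give the same direction in $\det(V(\gamma))\otimes\bigotimes_{e\in E(\gamma)}\det(F^e(\gamma))$ if and only if $(-1)^n\sign(\sigma)=1$, that is, if and only if $(-1)^n=\sign(\sigma)$, which is precisely the equivalence relation of Definition~\ref{def:linear-orientation-trivalent-graph}. Hence the above assignment induces a well-defined injection from linear orientations to directions in that space; it is also surjective, since any direction is represented by choosing any linear ordering of $V(\gamma)$ and, on each edge $e$, the orientation whose associated generator of $\det(F^e(\gamma))$ agrees with the given one up to a positive factor. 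A linear orientation of $\gamma$ is therefore the same thing as a choice of direction in $\det(V(\gamma))\otimes\bigotimes_{e\in E(\gamma)}\det(F^e(\gamma))$.
\end{proof}
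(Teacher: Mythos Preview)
Your proposal is correct and follows exactly the approach the paper intends: the paper does not actually prove this lemma, merely introducing it with ``Similarly'' after Lemma~\ref{lem:cyclic-orientation}, and your argument is the natural explicit unwinding of that parallel. The sign bookkeeping---$(-1)^n$ from reorienting $n$ edges and $\sign(\sigma)$ from permuting the vertex ordering, combining multiplicatively in the tensor product---is exactly the point, and you have matched it correctly to the equivalence relation of Definition~\ref{def:linear-orientation-trivalent-graph}.
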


The following result is proved in \cite[Lem.\ 5.3.2]{kapranov_rozanskywitten_1999} (see also \cite[p.\ 20]{sawon_rozanskywitten_1999}).
\begin{prop}
\label{prop:kapranov-orientations}
  The linear and the cyclic orientations coincide.
  More precisely, for every trivalent graph $\gamma$, there is 
  a canonical identification of 1-dimensional vector spaces
  \[
    \det(V(\gamma))\otimes \bigotimes_{e\in E(\gamma)} \det(F^e(\gamma)) 
    \cong \bigotimes_{v\in V(\gamma)}\det(F_v(\gamma)) , 
  \]
  i.e., 
  a canonical correspondence between the linear and cyclic orientations.
\end{prop}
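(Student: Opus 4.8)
The plan is to exhibit an explicit canonical isomorphism between the two one-dimensional vector spaces by a local-to-global argument, treating each vertex separately and then seeing how the edges glue things together. The key observation is that at a single trivalent vertex $v$ with flags $f_1, f_2, f_3$, there is a canonical isomorphism $\det(F_v(\gamma)) \cong \det(\fR^{\{f_1,f_2,f_3\}})$, and a cyclic ordering is precisely a choice of generator here. On the other side, a linear orientation records an ordering of $V(\gamma)$ together with a direction in each $\det(F^e(\gamma))$. First I would rewrite $\bigotimes_{e} \det(F^e(\gamma))$ using the fact that $F(\gamma) = \bigsqcup_{e} F^e(\gamma) = \bigsqcup_{v} F_v(\gamma)$: both decompositions give $\det(\fR^{F(\gamma)})$ up to the sign of the permutation relating the two orderings of the flag set, so there is an identification
\[
  \bigotimes_{e\in E(\gamma)} \det(F^e(\gamma)) \;\cong\; \det(\fR^{F(\gamma)}) \;\cong\; \bigotimes_{v\in V(\gamma)} \det(F_v(\gamma)),
\]
but this identification depends on an ordering of $E(\gamma)$ (to order the first tensor product) and an ordering of $V(\gamma)$ (to order the last one). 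So I would bundle the $\det(V(\gamma))$ factor on the left together with the edge factors, and a choice of ordering of $E(\gamma)$ on the right; the claim is that the combined dependence cancels.

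The main steps, in order, would be: (i) fix auxiliary total orderings of $V(\gamma)$, $E(\gamma)$, and of the flags within each vertex and each edge, so that every $\det$ above acquires a preferred generator; (ii) write down the resulting isomorphism $\Phi$ between the two sides as the composite of the flag-set identifications; (iii) check that changing the auxiliary ordering of $V(\gamma)$ multiplies the preferred generator of $\det(V(\gamma))$ by $\sign(\sigma)$ and simultaneously permutes the $\bigotimes_v \det(F_v(\gamma))$ factors by the same $\sigma$, hence contributes the same sign on both sides and leaves $\Phi$ unchanged; (iv) similarly check that changing the ordering of $E(\gamma)$ permutes the $\bigotimes_e \det(F^e(\gamma))$ factors, contributing $\sign(\tau)$, and that this is exactly compensated when the edge factors are re-expressed through $\det(\fR^{F(\gamma)})$; (v) check that reversing the orientation of a single edge $e$, i.e. swapping the two flags in $F^e(\gamma)$, flips the generator of $\det(F^e(\gamma))$ and also induces a transposition inside $\det(\fR^{F(\gamma)})$, so again no net change; (vi) likewise check independence of the within-vertex flag orderings. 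The conclusion is that $\Phi$ does not depend on any of the auxiliary choices, so it is canonical, and it sends a representative of a linear orientation to a representative of a cyclic orientation compatibly with the two equivalence relations of Definitions~\ref{def:linear-orientation-trivalent-graph} and \ref{def:cyclic-orientation-trivalent-graph}. Combined with Lemmas~\ref{lem:cyclic-orientation} and \ref{lem:linear-orientation}, this gives the asserted correspondence between linear and cyclic orientations.

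The part I expect to be the main obstacle is the bookkeeping in step (iv)--(v): one must be careful that the two descriptions of $\det(\fR^{F(\gamma)})$ — grouped by edges versus grouped by vertices — are compared through the sign of a single well-defined permutation of $F(\gamma)$, and that reversing an edge or reordering $E(\gamma)$ acts on that permutation in a way whose sign precisely matches the sign picked up on the linear side. The cleanest way to organize this is probably to avoid ever choosing a global ordering of $F(\gamma)$ and instead observe that for a partition of a finite set $S = \bigsqcup_i S_i$ there is a canonical isomorphism $\det(\fR^S) \cong \bigotimes_i \det(\fR^{S_i})$ once the index set is ordered, and that for two partitions the composite $\bigotimes_i \det(\fR^{S_i}) \cong \det(\fR^S) \cong \bigotimes_j \det(\fR^{S_j'})$ is canonical up to the orderings of the two index sets only. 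Applying this with $S = F(\gamma)$, $S_i = F^e(\gamma)$ on one side and $S_j' = F_v(\gamma)$ on the other, the $\det$ of the index sets $E(\gamma)$ and $V(\gamma)$ are exactly the factors that appear (or can be inserted) in the statement, and a short parity computation — using $3\#V(\gamma) = 2\#E(\gamma) = \#F(\gamma)$ and that each $|F^e(\gamma)| = 2$, $|F_v(\gamma)| = 3$ — closes the argument. This is essentially the computation of \cite[Lem.\ 5.3.2]{kapranov_rozanskywitten_1999}, which we follow.
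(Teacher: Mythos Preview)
Your proposal is correct and follows essentially the same route as the paper's argument (which in turn follows Kapranov): both pass through the intermediate space $\det(F(\gamma))$ (the paper writes it as $\det(V(\gamma))\otimes\det(F(\gamma))$ on the vertex side), compare the two block decompositions of $F(\gamma)$ by edges and by vertices, and use the parity of the block sizes ($|F^e(\gamma)|=2$ even, $|F_v(\gamma)|=3$ odd) to see that the edge ordering contributes no sign while the vertex ordering contributes exactly the $\det(V(\gamma))$ factor. Your final paragraph is precisely the paper's mechanism; the only difference is that the paper packages it as two explicit maps $\alpha$ and $\beta$ rather than a checklist of invariance verifications, and your step~(iv) slightly overstates the sign from reordering edges (it is $+1$ outright, not a $\sign(\tau)$ that later cancels), but this does not affect the argument.
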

Following the proof in the two foregoing references, let us briefly show how to compute one orientation from the other.
Consider a trivalent graph $\gamma$ of order $k$.

We will show that both sides of the isomorphism of Proposition~\ref{prop:kapranov-orientations} are canonically isomorphic to a third 1-dimensional space, namely $\det(V(\gamma))\otimes\det(F(\gamma))$, and will exhibit explicit isomorphisms. 

The first of these isomorphisms is the linear map 
\[
  \alpha:\bigotimes_{v\in V(\gamma)} \det(F_v(\gamma)) \to \det(V(\gamma))\otimes\det(F(\gamma))
\]
defined as follows. Choose a linear ordering on the set of vertices, so that they are numbered $v_1,\dots,v_{2k}$. Then set
\[
  \alpha\left(\bigotimes_{v\in V(\gamma)} d_v\right) 
  = (v_1\wedge\dots\wedge v_{2k}) \otimes (d_{v_1} \wedge \dots \wedge d_{v_{2k}})
\]
for all $d_v\in\det(F_v(\gamma))$, $v\in V(\gamma)$.
The map $\alpha$ is an isomorphism since the subspaces $\fR^{F_v(\gamma)}$ are in direct sum in $\fR^{F(\gamma)}$.
Since $d_{v_i}\wedge d_{v_j} = - d_{v_j}\wedge d_{v_i}$ and $v_i \wedge v_j = - v_j\wedge v_i$ for all $i,j=1,\dots,2k$, it is clear that $\alpha$ is independent of the chosen ordering of the vertices.

To construct the second isomorphism, define 
\[
  \beta:\bigotimes_{e\in E(\gamma)} \det(F^e(\gamma)) \to \det(F(\gamma))
\]
as follows.
Choose a linear ordering on the set of edges, so that they are numbered $e_1,\dots,e_{3k}$. Then set
\[
  \beta\left(\bigotimes_{e\in E(\gamma)}d^e\right) = d^{e_1} \wedge \dots \wedge d^{e_{3k}}
\]
for all $d^e\in \det(F^e(\gamma))$, $e\in E(\gamma)$.
Again, the map $\beta$ is an isomorphism since the subspaces $\fR^{F^e(\gamma)}$ are in direct sum in $\fR^{F(\gamma)}$.
And since $d^{e_i}\wedge d^{e_j} = d^{e_j}\wedge d^{e_i}$ for all $i,j=1,\dots,3k$, the map $\beta$ is independent of the chosen ordering on $E(\gamma)$.

Now the map
\[
  (\id\otimes \beta^{-1}) \circ \alpha :
  \bigotimes_{v\in V(\gamma)}\det(F_v(\gamma))
  \to
  \det(V(\gamma))\otimes \bigotimes_{e\in E(\gamma)} \det(F^e(\gamma))
\]
is the isomorphism in Proposition~\ref{prop:kapranov-orientations}.

\begin{ex}
  Concretely, consider the following diagram $\gamma$ with the cyclic orientation given by the default anticlockwise ordering at each vertex,
  \[
    \vcenter{\hbox{\includegraphics{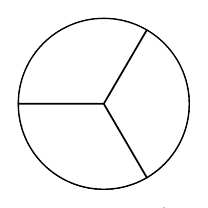}}} .
  \]
  We label the flags $f_1,\dots,f_{12}$ and the vertices $v_1,\dots,v_4$ as follows
  \[
    \vcenter{\hbox{\includegraphics[scale=1.2]{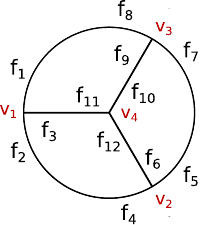}}} .
  \]
  The cyclic orientation on $\gamma$ thus corresponds to the direction given by the following element
  \begin{dmath}
  \label{eq:example-cyclic-orientation}
    (f_1\wedge f_2\wedge f_3)
    \otimes (f_4\wedge f_5\wedge f_6) 
    \otimes (f_7\wedge f_8\wedge f_9)
    \otimes (f_{10}\wedge f_{11}\wedge f_{12})
  \end{dmath}
  in $\bigotimes_{v\in V(\gamma)}\det(F_v(\gamma))$.

  Applying $\alpha$ to this element, we get 
  \begin{dmath*}
    (v_1\wedge v_2\wedge v_3\wedge v_4) 
    \otimes (f_1\wedge f_2\wedge f_3\wedge f_4\wedge f_5\wedge f_6\wedge f_7\wedge f_8\wedge f_9\wedge f_{10}\wedge f_{11}\wedge f_{12}) .
  \end{dmath*}
  Applying $\id\otimes\beta^{-1}$, we arrive at\footnote{Here, in a similar way to what we did in Eq.~\ref{eq:example-cyclic-orientation}, for the ease of presentation we represented an element of the unordered tensor product $\bigotimes_{e\in E(\gamma)} \det(F^e(\gamma))$ as $d_1\otimes d_2\otimes \cdots \otimes d_{3k}$ for some $d_i\in\det(F^{e_i})$, by choosing an arbitrary linear ordering $e_1, e_2, \dots, e_{3k}$ on the set of edges.}
  \begin{dmath*}
    (v_1\wedge v_2\wedge v_3\wedge v_4) 
    \otimes \Big( (f_1\wedge f_8)\otimes (f_4\wedge f_2)\otimes (f_{11}\wedge f_3)\otimes (f_7\wedge f_5)\otimes (f_{6}\wedge f_{12}) \otimes (f_9\wedge f_{10}) \Big) .
  \end{dmath*}

  Hence, in this case, the correspondence between the cyclic and linear orientations is
  \[
    \vcenter{\hbox{\includegraphics{Trivalent-example-compiled.pdf}}} \quad \leftrightsquigarrow  \quad
    \vcenter{\hbox{\includegraphics{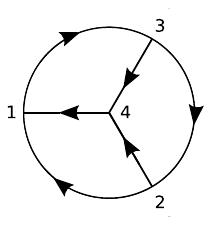}}} ,
  \]
  where on the left we have the implicit anticlockwise ordering at each vertex, and on the right a corresponding representative of the linear orientation.
\exend
\end{ex}

These two equivalent notions of orientation will both be needed, as the linear orientation is more adapted to the construction of weight systems, while the cyclic orientation is better suited for the definition of the IHX and AS relations.


\subsection{Weight systems}
\label{ssec:weight-systems}

We first recall the definition of a weight system on trivalent diagrams \cite{bar-natan_vassiliev_1995,rozansky_hyper-kahler_1997,garoufalidis_finite_1998}.
Denote by $\mathcal{T}$ (respectively, $\mathcal{T}_n$) the linear span (over $\fK$) of all trivalent diagrams (respectively, all trivalent diagrams of order $n$).

\begin{defn}
\label{def:T-weight-system}
  Let $V$ be a vector space over $\fK$. A $V$-valued \emph{$\mathcal{T}$-weight system} of order $n$ is a linear map $w_n:\mathcal T_n\to V$ satisfying
  \begin{enumerate}[label=(\arabic*)]
    \item the AS relation:
    $w_n \left( \vcenter{\hbox{\includegraphics{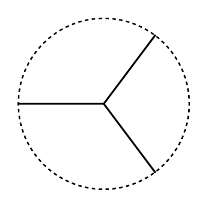}}} - \vcenter{\hbox{\includegraphics{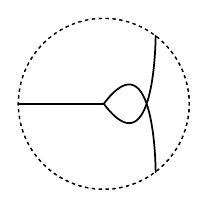}}} \right) = 0$,
    \item the IHX relation:
    $w_n \left( \vcenter{\hbox{\includegraphics{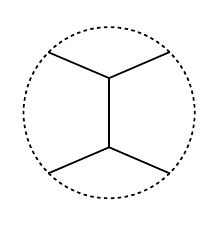}}} - \vcenter{\hbox{\includegraphics{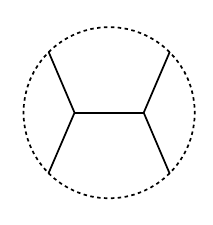}}} + \vcenter{\hbox{\includegraphics{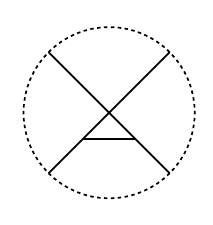}}} \right) = 0$,
  \end{enumerate}
  where, by $\vcenter{\hbox{\includegraphics[scale=.25]{AS1-compiled.pdf}}}$ and $\vcenter{\hbox{\includegraphics[scale=.25]{AS2-compiled.pdf}}}$ in the AS relation, we mean two diagrams which are identical outside a disk and as in the picture inside the disk, and similarly for the IHX relation.

  The diagrams above are taken with their default cyclic orientation given by the anticlockwise ordering at each vertex.
  Moreover, only trivalent vertices appear. Any apparent tetravalent vertex is not a vertex.
\end{defn}

Our aim is, from $(L,A,\omega)$ and for each positive integer $k$ such that $2k<\rk(A)$, to construct an $H^{2k}(A)$-valued $\mathcal{T}$-weight system of order $k$.

The idea of the construction is very simple and closely follows \cite{rozansky_hyper-kahler_1997,kapranov_rozanskywitten_1999} in the holomorphic symplectic case: to each vertex is associated a copy of the fully symmetric Atiyah class $\tilde\alpha_{L/A}$.
The edges are then used to contract all the $(L/A)^*$ parts of these copies with the help of the inverse $\omega^{-1}$ of the symplectic form. After complete antisymmetrization, the result is a scalar-valued $A$-cohomology class.
To describe precisely the procedure and the way the orientation fits in, let us introduce the notion of \emph{$\gamma$-admissible transformation} of $((L/A)^*)^{\otimes 6k}$, for a trivalent diagram $\gamma$.

Let $\gamma$ be a trivalent diagram of order $k$.
A set of ``admissible'' permutations of the $6k$ components of the tensor product $((L/A)^*)^{\otimes 6k}$ is associated to $\gamma$ in the following way.
Each such permutation will be defined by the following choices:
\begin{enumerate}[label=(C\arabic*)]
  \item \label{item:admissible-perm-or-representative} a linear orientation representative of the orientation of $\gamma$, i.e.,
  \begin{enumerate}
    \item \label{item:admissible-perm-lin-ord-vert} a linear ordering $v_1,\dots,v_{2k}$ of the vertices,
    \item \label{item:admissible-perm-or-edges} an orientation of each edge $e$, given by a linear ordering $f^1(e),f^2(e)$ of the flags of that edge;
  \end{enumerate}
  
  \item \label{item:admissible-perm-lin-ord-edges} a linear ordering $e_1,\dots,e_{3k}$ of the edges; 
  
  \item \label{item:admissible-perm-lin-ord-flags-at-vertices} a linear ordering $f_1(v),f_2(v),f_3(v)$ of the flags at each vertex $v$.
\end{enumerate}
From \ref{item:admissible-perm-lin-ord-vert} and \ref{item:admissible-perm-lin-ord-flags-at-vertices}, one gets a linear ordering 
\[
  f_1(v_1),f_2(v_1),f_3(v_1),f_1(v_2),f_2(v_2),f_3(v_2),\dots,f_1(v_{2k}),f_2(v_{2k}),f_3(v_{2k})
\]
of the flags of $\gamma$, and from \ref{item:admissible-perm-or-edges} and \ref{item:admissible-perm-lin-ord-edges}, one gets a second linear ordering 
\[
  f^1(e_1),f^2(e_1),f^1(e_2),f^2(e_2),\dots,f^1(e_{3k}),f^2(e_{3k})
\]
of the flags of $\gamma$.
Together, these orderings define a permutation $\sigma_\gamma$ of $\{1,\dots,6k\}$ sending the first one to the second.
It is given by 
\[
	\sigma_\gamma=\epsilon\circ\nu^{-1},
\] 
where $\nu$ and $\epsilon$ are the bijections defined by
\begin{align*}
  \nu&:F(\gamma)\to\{1,\dots,6k\}:f_i(v_j)\mapsto 3(j-1)+i \\
  \epsilon&:F(\gamma)\to\{1,\dots,6k\}:f^i(e_j)\mapsto 2(j-1)+i.
\end{align*}
Each such permutation defines a linear automorphism $\pi_\gamma$ of $((L/A)^*)^{\otimes 6k}$ by $\pi_\gamma = \tau_{\sigma_\gamma}$ (see \eqref{eq:definition-tau} in Section~\ref{sec:preliminaries}).

\begin{defn}
\label{def:admissible-permutation}
  The transformations $\pi_\gamma$ of $((L/A)^*)^{\otimes 6k}$ obtained from the choices \ref{item:admissible-perm-or-representative}, \ref{item:admissible-perm-lin-ord-edges} and \ref{item:admissible-perm-lin-ord-flags-at-vertices} above are called the \emph{$\gamma$-admissible transformations} of $((L/A)^*)^{\otimes 6k}$.
\end{defn}

In Theorem \ref{thm:T-weight-system}, we will build the weight associated to a symplectic Lie pair $(L,A,\omega)$ and a trivalent diagram $\gamma$ as a composition of three objects canonically associated to $(L,A,\omega)$ and $\gamma$, without any further choice of orientation representatives.
To this end, we make the following definition.

\begin{defn}
\label{def:canonical-permutation}
  The set of all $\gamma$-admissible transformations is denoted by $\Adm(\gamma)$.
  A transformation $\Pi_\gamma$ of $((L/A)^*)^{\otimes 6k}$ is canonically associated to $\gamma$ by
  \[
    \Pi_\gamma = \frac{1}{\lvert\Adm(\gamma)\rvert}\sum_{\pi_\gamma\in\Adm(\gamma)} \pi_\gamma .
  \]
\end{defn}

We can now state and prove the main theorem, generalizing \cite[Section 3.4]{rozansky_hyper-kahler_1997} and \cite[Theorem 5.4, first part]{kapranov_rozanskywitten_1999}.
\begin{thm}
\label{thm:T-weight-system}
  Let $(L,A,\omega)$ be a symplectic Lie pair. 
  For each $k\in\fN$, there is a canonical $\mathcal{T}$-weight system
  \[
	 w_{(L,A,\omega),k} : \mathcal{T}_k \to H^{2k}(A) 
\]
  of order $k$, defined on diagrams $\gamma$ of order $k$ by
  \begin{dmath}
  \label{eq:weight-system-as-composition}
    w_{(L,A,\omega),k}(\gamma) 
    = 
    \left(\omega^{-1}\right)^{\otimes 3k}
    \circ
    \Pi_\gamma
    \circ
    \left(\tilde\alpha_{L/A}\right)^{\abxcup 2k} .
  \end{dmath}
\end{thm}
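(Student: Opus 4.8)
The plan is to define $w_{(L,A,\omega),k}$ on oriented trivalent diagrams of order $k$ by the formula \eqref{eq:weight-system-as-composition}, extend it linearly to $\mathcal{T}_k$, and then check the three requirements of Definition~\ref{def:T-weight-system}: that it takes values in $H^{2k}(A)$, that it is canonical, and that it satisfies the AS and IHX relations. Throughout I would fix a symplectic $L$-connection $\nabla$ on $L/A$, which exists because $d_L(p^*\omega)=0$ (Proposition~\ref{prop:existence-symplectic-connections}); by the Theorem in Subsection~\ref{ssec:symmetries-of-the-atiyah-cocycle} the resulting Atiyah cocycle $\tilde R^\nabla$ then has totally symmetric image and represents $\tilde\alpha_{L/A}$ in $\Omega^1(A,S^3((L/A)^*))$. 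I would also note that $d_L(p^*\omega)=0$ forces $\partial^A\omega=0$ (by \eqref{eq:relation-closedness-A-invariance-2}), so that $\omega^\flat$, and hence $\omega^{-1}=(\omega^\flat)^{-1}$, is a morphism of $A$-modules and contraction with $(\omega^{-1})^{\otimes 3k}$ is a cochain map. Consequently, for any one $\gamma$-admissible transformation $\pi_\gamma$, the cochain $(\omega^{-1})^{\otimes 3k}\circ\pi_\gamma\circ(\tilde R^\nabla)^{\abxcup 2k}\in\Omega^{2k}(A)$ is $\partial^A$-closed ($\tilde R^\nabla$ is, hence so is any cup power of it, and $\pi_\gamma$ and $(\omega^{-1})^{\otimes 3k}$ are cochain maps), and its class in $H^{2k}(A)$ changes only by a coboundary if $\tilde R^\nabla$ is replaced by another representative of $\tilde\alpha_{L/A}$; in particular it does not depend on $\nabla$. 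This fixes the target group and the order.

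Next I would establish that the right-hand side of \eqref{eq:weight-system-as-composition} does not depend on the choices \ref{item:admissible-perm-or-representative}, \ref{item:admissible-perm-lin-ord-edges} and \ref{item:admissible-perm-lin-ord-flags-at-vertices}. Two $\gamma$-admissible transformations differ by a composition of: a permutation of the three flags at some vertex; a permutation of the ordering of the edges; and a change of the linear-orientation representative, consisting of a permutation $\sigma$ of the vertices together with a reversal of $n$ edge orientations with $(-1)^n=\sign(\sigma)$ (Definition~\ref{def:linear-orientation-trivalent-graph}). The first has no effect, since $\tilde R^\nabla$ is valued in $S^3((L/A)^*)$; the second merely reshuffles the $3k$ identical tensor factors of $(\omega^{-1})^{\otimes 3k}$; and for the third, the sign $\sign(\sigma)$ introduced by the graded-commutativity of the cup product of $1$-cochains is cancelled by the sign $(-1)^n$ produced by the antisymmetry of $\omega^{-1}$, while the accompanying reshuffling of tensor slots is absorbed into the permutation $\sigma_\gamma=\epsilon\circ\nu^{-1}$. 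Hence every $\gamma$-admissible transformation gives the same class, so does their average $\Pi_\gamma$, and $w_{(L,A,\omega),k}(\gamma)$ is well-defined; linearity is immediate. For the AS relation, reversing the cyclic orientation at one vertex corresponds, under Proposition~\ref{prop:kapranov-orientations}, to negating the linear orientation of $\gamma$, which can be realised by reversing a single edge; by the antisymmetry of $\omega^{-1}$ this multiplies \eqref{eq:weight-system-as-composition} by $-1$, with nothing to compensate it. All of this sign bookkeeping is the exact analogue of the one carried out in \cite{kapranov_rozanskywitten_1999,sawon_rozanskywitten_1999}.

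For the IHX relation I would argue as Rozansky--Witten, Kapranov and Sawon do, the key input now being Theorem~\ref{thm:Corollary71-from-CSX}. The diagrams $\gamma_{\mathrm I}$, $\gamma_{\mathrm H}$, $\gamma_{\mathrm X}$ agree outside a disk that contains one internal edge and the four flags leaving it, and differ inside by the three ways of pairing the two trivalent vertices carrying those flags. Choosing the orientation representatives compatibly and factoring out the common ``outer'' data — the $2k-2$ remaining copies of $\tilde R^\nabla$ and the $3k-1$ remaining contractions by $\omega^{-1}$ — the combination $w_{(L,A,\omega),k}(\gamma_{\mathrm I})-w_{(L,A,\omega),k}(\gamma_{\mathrm H})+w_{(L,A,\omega),k}(\gamma_{\mathrm X})$ becomes the outer contraction applied to the cup product of $(\tilde R^\nabla)^{\abxcup(2k-2)}$ with the local $2$-form obtained inside the disk. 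Reading the internal edge as the composition of two copies of the Atiyah cocycle via the bundle map $R_2$, and the four flags as the arguments $b_1,b_2,b_3$ and the output slot, this local $2$-form is, up to the orientation-induced signs that turn the pattern $+,+,+$ into $+,-,+$, precisely the $2$-form $\psi$ of Theorem~\ref{thm:Corollary71-from-CSX}. By that theorem $\psi=\partial^A\phi$; since the outer contraction is a cochain map and $(\tilde R^\nabla)^{\abxcup(2k-2)}$ is $\partial^A$-closed, $w_{(L,A,\omega),k}(\gamma_{\mathrm I})-w_{(L,A,\omega),k}(\gamma_{\mathrm H})+w_{(L,A,\omega),k}(\gamma_{\mathrm X})$ equals $\partial^A$ of the outer contraction of $\phi$ cupped with $(\tilde R^\nabla)^{\abxcup(2k-2)}$, hence vanishes in $H^{2k}(A)$. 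This proves IHX, and with it that $w_{(L,A,\omega),k}$ is a $\mathcal{T}$-weight system.

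The step I expect to be the main obstacle is the IHX verification, and within it the precise matching — with all orientation conventions and signs accounted for — of the local $2$-form attached to the I/H/X configurations with the coboundary $\psi$ of Theorem~\ref{thm:Corollary71-from-CSX}, including the insertion of $\omega^\flat$ and $\omega^{-1}$ that relates the contraction along the central edge to the $R_2$-composition. Everything else reduces to the orientation bookkeeping encoded in Proposition~\ref{prop:kapranov-orientations}, which was already performed for holomorphic symplectic manifolds in \cite{kapranov_rozanskywitten_1999,sawon_rozanskywitten_1999}.
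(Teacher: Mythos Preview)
Your proposal is correct and follows essentially the same approach as the paper: fix a symplectic connection so that $\tilde R^\nabla$ has totally symmetric image, show the cochain is closed because $\pi_\gamma$ and $(\omega^{-1})^{\otimes 3k}$ are $A$-module maps, check independence of the choices \ref{item:admissible-perm-or-representative}--\ref{item:admissible-perm-lin-ord-flags-at-vertices} via the symmetries of $\tilde R^\nabla$ and $\omega^{-1}$ (which simultaneously yields AS), and derive IHX by isolating the two-vertex local contribution and identifying it with the coboundary $\psi=\partial^A\phi$ of Theorem~\ref{thm:Corollary71-from-CSX}. The paper differs only in that it carries out the IHX step explicitly, writing down the concrete permutations $\tau_{(13)(24)}$, $\tau_{(15234)}$, $\tau_{(13)(25)}$ for $\delta_I,\delta_H,\delta_X$ and verifying the match with $\omega\circ(\psi\times\id)$ by hand---precisely the computation you flag as the main obstacle.
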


\begin{proof}
  Let $\gamma$ be a trivalent diagram of order $k$.

  \medskip
  \emph{Step 1.}
  We first define a candidate $w(\gamma)\in\Omega^{2k}(A)$ for a cocycle representing $w_{(L,A,\omega),k}(\gamma)$.

  Let $\nabla$ be a symplectic connection on $(L,A,\omega)$.
  Pick a choice of admissible transformation $\pi_\gamma\in\Adm(\gamma)$, and consider the composition 
  \begin{dmath}
  \label{eq:composition-weight-system-small-pi}
    w(\gamma)=\left(\omega^{-1}\right)^{\otimes 3k}
    \circ
    \pi_\gamma
    \circ
    \left(\tilde R^\nabla\right)^{\wedge 2k} ,
  \end{dmath}
  where $(\tilde R^\nabla)^{\wedge 2k}$ is considered as a map from $\Lambda^{2k}A$ to $((L/A)^{*})^{\otimes 6k}$, $\pi_\gamma$ as a map from $((L/A)^{*})^{\otimes 6k}$ to itself, and $(\omega^{-1})^{\otimes 3k}$ as a map from $((L/A)^{*})^{\otimes 6k}$ to the trivial line bundle $M\times \fK$.

  Since $\tilde R^\nabla$ is a cocycle, and since $\omega^{-1}$ and $\pi_\gamma$ are $A$-module maps, 
  we have $\partial^A(w(\gamma))=0$. 
  Hence, $w(\gamma)$ descends to an element of $H^{2k}(A)$, which we denote by
  \begin{dmath}
  \label{eq:composition-weight-system-small-pi-in-cohomology}
    \ol w(\gamma)
    =
    \left(\omega^{-1}\right)^{\otimes 3k}
    \circ
    \pi_\gamma
    \circ
    \left(\tilde\alpha_{L/A}\right)^{\abxcup 2k} .
  \end{dmath}

  \medskip
  \emph{Step 2.}
  We show that $w(\gamma)$ is independent of the choice of $\gamma$-admissible transformation $\pi_\gamma$.

  Since $\left(\omega^{-1}\right)^{\otimes 3k}$ lies in $\Gamma\left(S^{3k}\left((L/A)^{\otimes 2}\right)\right)$, and since the values of $(\tilde R^\nabla)^{\wedge 2k}$ lie in the subbundle $\left(S^3((L/A)^*)\right)^{\otimes 2k}$, it is clear that \eqref{eq:composition-weight-system-small-pi} is independent of the choices \ref{item:admissible-perm-lin-ord-edges} and \ref{item:admissible-perm-lin-ord-flags-at-vertices}, respectively.

  Similarly, if two $\gamma$-admissible transformations $\pi_\gamma$ and $\pi'_\gamma$ only differ by \ref{item:admissible-perm-lin-ord-vert}, i.e., by a permutation $\sigma$ of the vertices, then $\pi'_\gamma = \pi_\gamma\circ \pi$ with $\pi$ being the transformation permuting by $\sigma$ the $((L/A)^*)^{\otimes 3}$-components of $((L/A)^*)^{\otimes 6k}$.
  But 
  \begin{dmath}
  \label{eq:composition-sign-C1a}
    \pi\circ (\tilde R^\nabla)^{\wedge 2k} 
    = 
    (-1)^{\sign(\sigma)} (\tilde R^\nabla)^{\wedge 2k} .
  \end{dmath}
  On the other hand, if $\pi_\gamma$ and $\pi'_\gamma$ only differ by \ref{item:admissible-perm-or-edges}, then $\pi'_\gamma=\pi\circ\pi_\gamma$ with $\pi$ swapping the two components of, say, a number $n$ of $((L/A)^*)^{\otimes 2}$-components. The antisymmetry of $\omega^{-1}$ then yields 
  \begin{dmath}
  \label{eq:composition-sign-C1b}
    \left(\omega^{-1}\right)^{\otimes 3k} \circ \pi= (-1)^n \left(\omega^{-1}\right)^{\otimes 3k}.
  \end{dmath}
  This proves that \eqref{eq:composition-weight-system-small-pi} is independent of the choice \ref{item:admissible-perm-or-representative} of orientation representative.

  Overall, this shows that \eqref{eq:composition-weight-system-small-pi} and \eqref{eq:composition-weight-system-small-pi-in-cohomology} are independent of the choice of $\pi_\gamma\in\Adm(\gamma)$, so that we may average over all such choices to get $w_{(L,A,\omega),k}(\gamma)$ as a composition \eqref{eq:weight-system-as-composition} of three objects canonically associated to $(L,A,\omega)$ and $\gamma$.

  \medskip
  \emph{Step 3.}
  We prove the AS and IHX relations.

  That the AS relations holds follows from Eq.~\eqref{eq:composition-sign-C1a} and \eqref{eq:composition-sign-C1b}: a change of orientation of $\gamma$ induces a change of sign of $w_{(L,A,\omega),k}(\gamma)$.

  Assume that three trivalent diagrams $\gamma_I$, $\gamma_H$, and $\gamma_X$ are identical except in some disk containing two vertices, where they respectively look like the three diagrams in the definition of the IHX relation (Definition~\ref{def:T-weight-system}).
  The  I, H, and X parts (i.e., the parts inside the disks) are assumed to have the default anticlockwise ordering at their vertices, 
  and each vertex outside the disk is assumed to have the same cyclic ordering in all three diagrams.
  As a result, the three diagrams each come with a fixed cyclic orientation representative.

  To compute the IHX relation, we need to show that
  \[
    w(\gamma_I) - w(\gamma_H) + w(\gamma_X)
  \]
  is a coboundary in $\Omega^{2k}(A)$.
  To compute each part, we need to find an admissible transformation for each of the graphs $\gamma_I$, $\gamma_H$, and $\gamma_X$.
  Those transformations stem from choices \ref{item:admissible-perm-or-representative}--\ref{item:admissible-perm-lin-ord-flags-at-vertices}, which we make as follows.

  \begin{description}
    \item[Choice of \ref{item:admissible-perm-or-representative}:]
      Choose a linear orientation representative for $\gamma_I$ such that the first two vertices $v_1$ and $v_2$ are respectively the vertices at the top and the bottom of the I part, and such that the edge connecting them is oriented from top to bottom.
      Choose linear orientation representatives for $\gamma_H$ and $\gamma_X$ such that all parts common to all three diagrams
      (i.e., the vertices $v_3,\dots,v_{2k}$ and the orientations of all edges except the central edge in the disks) 
      agree in $\gamma_I$, $\gamma_H$ and $\gamma_X$.
      Then the remaining parts are fixed, up to a simultaneous swapping of $v_1$ and $v_2$ and flipping of the orientation of the central edge, and are given by
      \[
        \vcenter{\hbox{\includegraphics{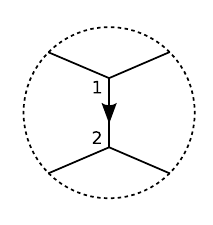}}} ,
        \qquad
        \vcenter{\hbox{\includegraphics{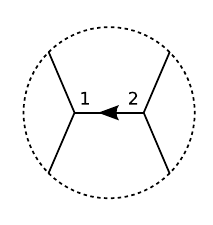}}} ,
        \qquad
        \vcenter{\hbox{\includegraphics{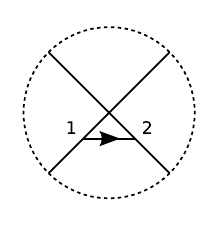}}} .
      \]

    \item[Choice of \ref{item:admissible-perm-lin-ord-edges}:]
      Put as first edge the central edge of each disk and have the same ordering for the remaining edges in all three diagrams.

    \item[Choice of \ref{item:admissible-perm-lin-ord-flags-at-vertices}:]
      Choose the orderings of the flags inside the disks as shown below.
      \[
        \vcenter{\hbox{\includegraphics{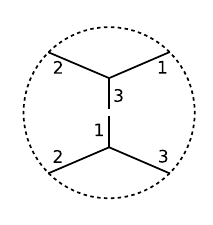}}} ,
        \qquad
        \vcenter{\hbox{\includegraphics{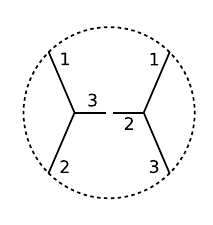}}} ,
        \qquad
        \vcenter{\hbox{\includegraphics{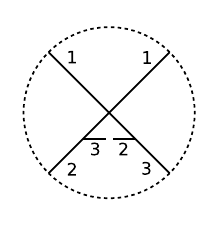}}} .
      \]
      Pick any ordering for the flags at the vertices outside the disks, with the constraint that these orderings coincide in all three diagrams $\gamma_I$, $\gamma_H$, and $\gamma_X$.
  \end{description}

  With these choices, 
  we can decompose the value of the weight system on $\gamma_I$ as a pairing between two contributions:
  \begin{dmath*}
    \left(\omega^{-1}\right)^{\otimes 3k} \circ \pi_I \circ \left(\tilde R^\nabla\right)^{\wedge 2k}
    = \langle \beta, \delta_I \rangle .
  \end{dmath*}
  Here,
  \begin{itemize}
    \item $\delta_I$ is an $((L/A)^*)^{\otimes 4}$-valued 2-form on $A$ representing the contribution of the vertices 1 and 2 and of the oriented central edge connecting them; 
    \item $\beta$ is an $(L/A)^{\otimes 4}$-valued $(6k-2)$-form on $A$ (actually, a $(6k-2)$-cocycle) representing the contribution of the rest of the diagram;
    \item the pairing $\left< \cdot,\cdot\right>$ is the wedge product on the $A$-forms and the pairing on their values.
  \end{itemize}
  Let us describe $\delta_I$ more precisely. We label the six flags of I as follows
  \[
    \vcenter{\hbox{\includegraphics{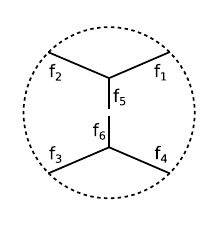}}} .
  \]
  Since $\delta_I$ represents the contribution of two vertices and one edge, it will be formed from 
  \begin{enumerate}[label=(\arabic*)]
    \item two copies of the Atiyah cocycle: $\tilde R^\nabla \wedge \tilde R^\nabla\in \Omega^2(A,((L/A)^*)^6)$, 
    \item one copy of $\omega^{-1}$, which we will choose to be in first position: $\omega^{-1}\otimes \id^{\otimes 4}:((L/A)^*)^6 \to ((L/A)^*)^4$, and
    \item a permutation $\pi$ of the six flags of diagram I sending the ordering given by \ref{item:admissible-perm-lin-ord-vert} and \ref{item:admissible-perm-lin-ord-flags-at-vertices} to the ordering given by \ref{item:admissible-perm-or-edges}, \ref{item:admissible-perm-lin-ord-edges}, and the additional requirement that ``the four legs of the I part oriented North-East, North-West, South-West, and South-East should be identified with the components 1, 2, 3, and 4, respectively, of $((L/A)^*)^{\otimes 4}$ in the value of $\delta_I$'' (thus after contracting with $\omega^{-1}$).
    
      The permutation $\pi$ thus sends the ordering $f_1, f_2, f_5, f_6, f_3, f_4$ to the ordering $f_5, f_6, f_1, f_2, f_3, f_4$.
      Hence, we have $\pi=(13)(24)$.
  \end{enumerate}
  We obtain
  \begin{dmath*}
    \delta_I 
    = \left( \omega^{-1} \otimes \id^{\otimes 4}\right)
    \circ
    \tau_{(13)(24)}
    \circ
    \left( \tilde R^\nabla \wedge \tilde R^\nabla \right)
    =
    \left( \omega^{-1} \otimes \id^{\otimes 4}\right)
    \circ
    \tau_{(13)(24)}
    \circ
    \left( {\id} \otimes {\id} \otimes {\omega^\flat} \otimes {\id} \otimes {\id} \otimes {\omega^\flat} \right)
    \circ
    \left( R^\nabla \wedge R^\nabla \right)
    =
    \left( \omega^{-1} \otimes \id^{\otimes 4}\right)
    \circ
    \left( {\omega^\flat} \otimes {\id} \otimes {\id} \otimes {\id} \otimes {\id} \otimes {\omega^\flat} \right)
    \circ
    \tau_{(13)(24)}
    \circ
    \left( R^\nabla \wedge R^\nabla \right)
    = \left( {\Tr} \otimes {\id^{\otimes 3}} \otimes {\omega^\flat} \right)
    \circ
    \tau_{(13)(24)}
    \circ
    \left( R^\nabla \wedge R^\nabla \right) .
  \end{dmath*}
  At the last line, $\Tr:L/A\otimes (L/A)^*\to M\times \fK$ is the canonical pairing and we used the identity
  \[
    {\Tr} 
    = \omega^{-1} \circ \left( \omega^\flat \otimes \id \right) : L/A \otimes (L/A)^* \to M\times \fK .
  \]
  Recall that $\omega^{-1}:(L/A)^*\wedge (L/A)^*\to M\times \fK$ is defined by $(\omega^{-1})^\sharp=(\omega^\flat)^{-1}$. Here $(\omega^{-1})^\sharp:(L/A)^*\to L/A:\alpha\mapsto \iota_\alpha\omega^{-1}$ and $\omega^\flat:L/A\to(L/A)^*:v\mapsto \iota_v\omega$.

  Now, we can check that
  \[
    \left\langle \delta_I , b_1\otimes b_2 \otimes b_3\otimes b_4 \right\rangle
    =
    \omega
    \left.\Big(
      \left\lceil R_2(R_2(b_1,b_2),b_3) \right\rfloor
      ,
      b_4
    \Big)\right.
  \]
  for all $b_1,b_2,b_3,b_4\in\Gamma(L/A)$, where the notation is taken from Theorem~\ref{thm:Corollary71-from-CSX}.

  Similarly, and with the same requirement about the ordering of the four legs of the diagrams, the value of the weight system on $\gamma_H$ and $\gamma_X$ can be computed by $\left< \beta, \delta_H\right>$ and $\left<\beta, \delta_X \right>$ where
  \begin{align*}
    \delta_H 
    &= \left( \omega^{-1} \otimes \id^{\otimes 4}\right)
    \circ
    \tau_{(15234)}
    \circ
    \left( \tilde R^\nabla \wedge \tilde R^\nabla \right)
    \\
    &= - \left( {\Tr} \otimes {\id^{\otimes 3}} \otimes {\omega^\flat} \right)
    \circ
    \tau_{(15234)}
    \circ
    \left( R^\nabla \wedge R^\nabla \right)
  \end{align*}
  and
  \begin{align*}
    \delta_X
    &= \left( \omega^{-1} \otimes \id^{\otimes 4}\right)
    \circ
    \tau_{(13)(25)}
    \circ
    \left( \tilde R^\nabla \wedge \tilde R^\nabla \right)
    \\
    &= \left( {\Tr} \otimes {\id^{\otimes 3}} \otimes {\omega^\flat} \right)
    \circ
    \tau_{(13)(25)}
    \circ
    \left( R^\nabla \wedge R^\nabla \right) .
  \end{align*}

  Assembling the three parts, we compute, for all $b_1,b_2,b_3,b_4\in\Gamma(L/A)$,
  \begin{dmath*}
    \left\langle \delta_I - \delta_H + \delta_X, b_1\otimes b_2 \otimes b_3\otimes b_4 \right\rangle
    =
    \omega
    \left.\Big(
              \left\lceil R_2(R_2(b_1,b_2),b_3) \right\rfloor
    - \left( -\left\lceil R_2(b_1,R_2(b_2,b_3)) \right\rfloor \right)
    +         \left\lceil R_2(b_2,R_2(b_1,b_3)) \right\rfloor
    ,
    b_4
    \Big)\right.
    =
    \left\langle \omega \circ (\psi \times \id), b_1\otimes b_2 \otimes b_3\otimes b_4 \right\rangle
    =
    \left\langle \partial^A \left( \omega \circ (\phi \otimes \id) \right), b_1\otimes b_2 \otimes b_3\otimes b_4 \right\rangle
  \end{dmath*}
  where $\psi$ and $\phi$ are defined in Theorem~\ref{thm:Corollary71-from-CSX}. 

  Since $\beta$ is a cocycle, this proves that 
  \begin{dmath*}
    w(\gamma_I) - w(\gamma_H) + w(\gamma_X) 
    = \left\langle \beta, \delta_I - \delta_H + \delta_X\right\rangle
    = \partial^A 
      \left\langle 
        \beta, 
        \left( \omega \circ (\phi \otimes \id) \right) 
      \right\rangle
  \end{dmath*}
  is a coboundary.
  Hence, the IHX relation is satisfied by $\ol w$.
\end{proof}


\subsection{Chord diagrams}
\label{ssec:chord-diagrams}

As in the classical case of Rozansky--Witten invariants, the construction above easily extends to weight systems on chord diagrams---yielding knot invariants---starting from a symplectic Lie pair $(L,A,\omega)$ and an $A$-module $E$ (or a collection $E_1,\dots,E_n$ thereof, to get link invariants).
We only sketch the proof here.

\begin{defn}
  A \emph{chord diagram} of order $n$ is an oriented circle together with $n$ chords (i.e., $n$ unordered pairs of points on the circle, all points being distinct) up to orientation preserving diffeomorphisms of the circle.
\end{defn}

Chords are represented by dashed lines, as in
\[
  \vcenter{\hbox{\includegraphics{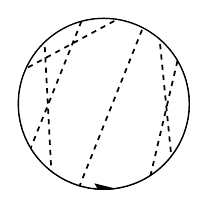}}}, 
  \qquad 
  \vcenter{\hbox{\includegraphics{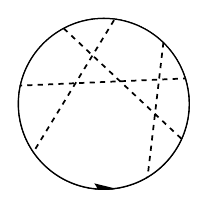}}} .
\]
Note that the first diagram has an \emph{isolated} chord, i.e., a chord not crossed by any other.
We will denote $\mathcal{C}$ (respectively, $\mathcal{C}_n$) the linear span (over $\fK$) of all chord diagrams (respectively, of order $n$).

\begin{defn}
  Let $V$ be a vector space over $\fK$. A $V$-valued \emph{weight system} of order $n$ is a linear map $w_n:\mathcal{C}_n\to V$ satisfying
  \begin{enumerate}[label=(\arabic*)]
    \item the 1T relation:
      $w_n$ vanishes on diagrams having an isolated chord;
    \item the 4T relation:
      \[
        w_n\left( \vcenter{\hbox{\includegraphics{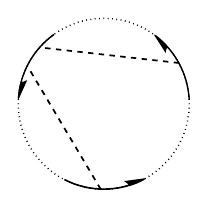}}} 
        - \vcenter{\hbox{\includegraphics{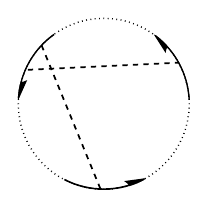}}} 
        - \vcenter{\hbox{\includegraphics{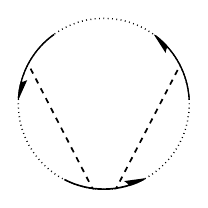}}} 
        + \vcenter{\hbox{\includegraphics{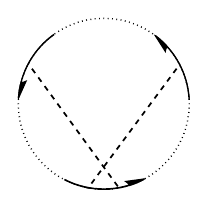}}} \right) = 0.
      \]
  \end{enumerate}
\end{defn}

As before, let $\nabla$ be an $L$-connection on $E$ extending the $A$-action. Its curvature defines an $A$-cocycle $R^\nabla\in\Gamma(A^*\otimes (L/A)^* \otimes \End(E))$ representing the Atiyah class $\alpha_E$ of $E$.
Let $D$ be a chord diagram with $k$ chords and choose an origin (not at a vertex) on the oriented circle so that the vertices are ordered $v_1,\dots,v_{2k}$.

Associate to each vertex a copy of $R^\nabla$ and take their tensor product, i.e., consider the element $R^\nabla \otimes \cdots \otimes R^\nabla$ in $\Gamma\left( \left(A^*\otimes (L/A)^* \otimes \End(E)\right)^{\otimes 2k}\right)$.
Compose the $\End(E)$ parts from left to right and then take the trace of the result.
Contract the $(L/A)^*$ parts with $\omega^{-1}$ along the chords, using the ordering coming from the choice of origin.
The result is an element of $\Gamma((A^*)^{\otimes 2k})$ which we project to the exterior product $\Gamma(\Lambda^{2k} A^*)$.

The proof that this gives a weight system goes along the same lines as that of Theorem~\ref{thm:T-weight-system} and uses two main ingredients, in addition to the total symmetry of the Atiyah class of $L/A$.
The first ingredient is (the cocycle version of) the fact that the Atiyah class of an $A$-module $E$ makes $E[-1]$ a module over the Lie algebra object $L/A[-1]$ in the derived category of $U(A)$-modules \cite[Sec.\ 2.5.5 and Thm. 45]{chen_atiyah_2012}.
Here, $U(A)$ is the universal enveloping algebra of the Lie algebroid $A$.
This is used to prove the 4T relation.
To get the 1T relation as well, one needs to project this almost-weight system to the space of weight systems. This only uses the Hopf algebra structure of the space $\mathcal{C}/\Span\left<4T\right>$ of chord diagrams modulo the 4T relations, and is thus identical to the metric Lie algebra case \cite[Sec.\ 3.2.2 and Ex.\ 3.16]{bar-natan_vassiliev_1995}.


\let\emph\oldemph


\bibliographystyle{abbrv}
\bibliography{Paper-RW-SymplecticLiePairs}

\end{document}